\newtheorem{question}{Question}
\newtheorem{lem}{Lemma}[section]
\newtheorem{thm}[lem]{Theorem}
\newtheorem{proposition}[lem]{Proposition}
\newtheorem{prop}[lem]{Proposition}
\newtheorem{cor}[lem]{Corollary}
\theoremstyle{definition}
\newtheorem{remark}[lem]{Remark}
\newtheorem{conj}{Conjecture}
\DeclareMathAlphabet{\curly}{U}{rsfs}{m}{n}
\newcommand{\tors}{\operatorname{tors}}
\newcommand{\End}{\operatorname{End}}
\newcommand{\Aut}{\operatorname{Aut}}
\newcommand{\Gal}{\operatorname{Gal}}
\newcommand{\gon}{\operatorname{gon}}
\newcommand{\ord}{\operatorname{ord}}
\newcommand{\Q}{\mathbb{Q}}
\newcommand{\C}{\mathbb{C}}
\newcommand{\Z}{\mathbb{Z}}
\newcommand{\F}{\mathbb{F}}
\newcommand{\GL}{\operatorname{GL}}
\newcommand{\AGL}{\operatorname{AGL}}
\newcommand{\PSL}{\operatorname{PSL}}
\newcommand{\SL}{\operatorname{SL}}
\newcommand{\Supp}{\operatorname{Supp}}
\newcommand{\PP}{{\mathbb P}}
\newcommand{\lmfdbec}[3]{\href{https://www.lmfdb.org/EllipticCurve/Q/#1/#2/#3}{#1.#2#3}}
  \newcommand{\im}{\operatorname{im}}
\mathchardef\mhyphen="2D
\title{Sporadic points of odd degree on $X_1(N)$ coming from $\Q$-curves}
\author{Abbey Bourdon}
\address{Wake Forest University, Winston-Salem, NC 27109, USA}
\email{bourdoam@wfu.edu}
\urladdr{http://users.wfu.edu/bourdoam/}
\author{Filip Najman}
\address{University of Zagreb, Bijeni\v{c}ka Cesta 30, 10000 Zagreb, Croatia}
\email{fnajman@math.hr}
\urladdr{http://web.math.pmf.unizg.hr/~fnajman/}
\begin{document}

\begin{abstract}
We say a closed point $x$ on a curve $C$ is sporadic if there are only finitely many points on $C$ of degree at most $\deg(x)$. In the case where $C$ is the modular curve $X_1(N)$, most known examples of sporadic points come from elliptic curves with complex multiplication (CM). We seek to understand all sporadic points on $X_1(N)$ corresponding to $\Q$-curves, which are elliptic curves isogenous to their Galois conjugates. This class contains not only all CM elliptic curves, but also any elliptic curve $\overline{\Q}$-isogenous to one with a rational $j$-invariant, among others. In this paper, we show that all non-CM $\Q$-curves giving rise to a sporadic point of odd degree lie in the $\overline{\Q}$-isogeny class of the elliptic curve with $j$-invariant $-140625/8$. In addition, we show that a stronger version of this finiteness result would imply Serre's Uniformity Conjecture.
\end{abstract}

\maketitle

\section{Introduction}

Let $E$ be an elliptic curve defined over a number field $F$. By the Mordell-Weil Theorem, the set of points on $E$ with coordinates in $F$ is a finitely generated abelian group. That is, there exists a finite abelian group $E(F)_{\tors}$ and nonnegative integer $r$ such that
\[
E(F) \cong E(F)_{\tors} \times \Z^r.
\]
A standard classification problem is to describe the groups that arise as $E(F)_{\tors}$ as $E$ ranges over all number fields $F$ of a fixed degree $d$. In general, there are only finitely many possible torsion subgroups that appear---a consequence of Merel's Uniform Boundedness Theorem \cite{merel}---and the list of groups is known explicitly for degrees $1 \leq d \leq 3$; see \cite{mazur77,kamienny86,KM88,kamienny92,Deg3Class}. For example, the possible torsion subgroups of elliptic curves over $\Q$ are given by the following theorem:

\begin{thm}[Mazur, \cite{mazur77}]
If $E/\Q$ is an elliptic curve, then $E(\Q)_{\tors}$ is isomorphic to one of the following:
\begin{align*}
&\Z/m\Z,  \text{ with } 1 \leq m \leq 10 \text{ or } m=12,\\
& \Z/2\Z \times \Z/2m\Z,  \text{ with } 1 \leq m \leq 4.
\end{align*}
Moreover, for each group $T$ that arises, there are infinitely many non-isomorphic elliptic curves $E/\Q$ such that $E(\Q)_{\tors} \cong T$.
\end{thm}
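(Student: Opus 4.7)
The plan is to translate the problem to rational points on the modular curves $X_1(N)$, which are the moduli spaces for pairs $(E, P)$ where $P \in E$ has exact order $N$. An elliptic curve $E/\Q$ has a rational point of order $N$ if and only if $X_1(N)$ has a non-cuspidal rational point with underlying $E$; thus the classification reduces to determining, for each $N$, whether $X_1(N)(\Q)$ contains non-cuspidal points, and if so how many.

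First I would establish the positive part of the list together with the ``moreover'' clause. For $N \leq 10$ and $N = 12$, the curve $X_1(N)$ has genus $0$ and is isomorphic to $\PP^1$ over $\Q$, so the non-cuspidal points form an infinite family, producing infinitely many non-isomorphic $E/\Q$ with the prescribed $N$-torsion. A parallel genus-zero analysis handles the mixed torsion groups $\Z/2\Z \times \Z/2m\Z$ for $1 \leq m \leq 4$ via the appropriate moduli space.

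Next, I would use the divisibility observation -- a point of order $N'$ yields a point of order $N$ whenever $N \mid N'$ -- to reduce the ``ruling-out'' direction to a short list of critical $N$: the values $N = 11, 14, 15, 16$, together with every prime $p \geq 17$. For the small composite values, $X_1(N)$ has small positive genus and its Jacobian has rank $0$ over $\Q$; in each case one can compute $X_1(N)(\Q)$ explicitly and show it consists entirely of cusps (for $N = 11$, one verifies that $X_1(11)$ is an elliptic curve of rank $0$ whose rational points are exactly the rational cusps).

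The main obstacle is the prime case $p \geq 17$, which is the heart of Mazur's theorem. Here one studies the Jacobian $J_0(p)$ and its \emph{Eisenstein quotient} $\tilde{J}$, defined via the Eisenstein ideal of the Hecke algebra. The crucial input is the finiteness of $\tilde{J}(\Q)$, proved through a delicate mod-$\ell$ analysis of Hecke modules; one then combines this with a formal immersion argument, showing that the composition $X_0(p) \to J_0(p) \to \tilde{J}$ is a formal immersion at the cusp $\infty$ after reduction modulo a suitable auxiliary prime $\ell$. This forces any non-cuspidal rational point of $X_0(p)$ -- and hence of $X_1(p)$ -- to reduce to a cusp modulo $\ell$ and ultimately to coincide with a cusp itself. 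The deep arithmetic of the Eisenstein ideal required for both the finiteness of $\tilde{J}(\Q)$ and the formal-immersion criterion is by far the technically demanding step.
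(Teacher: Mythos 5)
The paper does not prove this statement at all: it is quoted as background, with the proof deferred entirely to Mazur \cite{mazur77}. So your sketch can only be judged against the literature, and there it has the right overall architecture (genus-$0$ analysis of $X_1(N)$ for the groups that occur, then ruling out the remaining levels, with the Eisenstein quotient and formal immersion at the heart of the prime case) but a concrete gap in the reduction step. Your ``short list'' $N=11,14,15,16$ together with primes $p\geq 17$ is not sufficient. Divisibility does not eliminate $N=13$ (a prime strictly between $11$ and $17$), nor the composite values $N=18,20,21,24,25,27$ (and $49$), every proper divisor of each of which lies in the allowed list, nor the mixed groups $\Z/2\Z\times\Z/10\Z$ and $\Z/2\Z\times\Z/12\Z$, whose impossibility does not follow from any cyclic case you rule out and requires the curves $X_1(2,10)$ and $X_1(2,12)$. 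The omission of $N=13$ is the most serious, because it cannot be absorbed into your prime machinery: $X_0(13)$ has genus $0$, so $J_0(13)=0$ and there is no Eisenstein quotient to map to; order-$13$ torsion is instead excluded by a direct study of the genus-$2$ curve $X_1(13)$ (Mazur--Tate), which is a genuinely separate argument.

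A second, subtler problem is that your formal-immersion step claims too much: it cannot force \emph{every} non-cuspidal rational point of $X_0(p)$ to be a cusp, since $Y_0(p)(\Q)\neq\emptyset$ for $p=17,19,37,43,67,163$ --- rational cyclic $p$-isogenies do exist at these primes (compare Theorem \ref{IsogClassification} in this very paper), so an argument proving $Y_0(p)(\Q)=\emptyset$ for all $p\geq 17$ would be false. The formal immersion only shows that a rational point which reduces to the cusp modulo the auxiliary prime must \emph{equal} the cusp; one still needs the separate input that a point of $X_0(p)$ arising from a rational \emph{torsion} point of order $p$ (as opposed to a mere rational isogeny) reduces to a cusp at a small prime. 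This is supplied by a Néron-model argument: prime-to-$\ell$ torsion injects under reduction, and the groups available in the special fiber at $2$ (or $3$) are too small to contain a point of order $p\geq 17$ unless the curve degenerates, placing the section in the cuspidal locus of the fiber. Without this step the deduction from ``formal immersion plus finiteness of $\tilde{J}(\Q)$'' to the torsion classification does not go through.
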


The classification of torsion subgroups of elliptic curves over cubic fields presents a new phenomenon. For the first time, there are certain groups which arise for only finitely many isomorphism classes of elliptic curves.

\begin{thm}[Najman \cite{najman16}, Derickx, Etropolski, van Hoeij, Morrow, Zureick-Brown \cite{Deg3Class}] \label{NajmanCurve}
The elliptic curve {\lmfdbec{162}{c}{3}} over $\Q(\zeta_9)^+$ with $j$-invariant $-140625/8$ is the unique elliptic curve over a cubic field with a point of order 21.
\end{thm}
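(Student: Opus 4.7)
The plan is to analyze the cubic points on the modular curve $X_1(21)$, since an elliptic curve $E/F$ with a point of order $21$ over a cubic field $F$ corresponds to a non-cuspidal closed point of degree (dividing) $3$ on $X_1(21)$. The curve $X_1(21)$ has genus $5$, so I would first verify that its $\Q$-gonality exceeds $3$; otherwise there could be infinitely many cubic points swept out by a $g^1_3$. Granted this, cubic points on $X_1(21)$ are sporadic and the goal is to enumerate them.

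The main tool is the Abel--Jacobi map $\iota \colon X_1(21)^{(3)} \to J_1(21)$ sending an effective degree-$3$ divisor $D$ to the class of $D - 3\infty$, where $\infty$ is a rational cusp. A cubic point on $X_1(21)$ yields a $\Q$-point of $X_1(21)^{(3)}$, so I would first establish that $J_1(21)(\Q)$ is finite by exhibiting the $\Q$-isogeny decomposition of $J_1(21)$ into modular abelian varieties and checking---via analytic rank computations together with Kolyvagin--Logachev, or by consulting the LMFDB---that every simple factor has Mordell--Weil rank $0$ over $\Q$.

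With $J_1(21)(\Q)$ finite, the image $\iota(X_1(21)^{(3)}(\Q))$ lies in a concrete finite group which one can enumerate. For each class, I would apply a Mordell--Weil sieve together with a formal immersion argument in the style of Kamienny--Mazur: choose a prime $p$ of good reduction for $X_1(21)$, work in the residue disk of a known rational degree-$3$ divisor (e.g.\ a sum of rational cusps), and use that $\iota$ is a formal immersion there to conclude that the rational cusps and known quadratic points account for all but finitely many explicit possibilities. The surviving $\Q$-points of $X_1(21)^{(3)}$ should consist of cuspidal divisors together with a single non-cuspidal cubic divisor; one then checks directly that the corresponding elliptic curve has $j$-invariant $-140625/8$ and field of definition $\Q(\zeta_9)^+$.

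The principal obstacle is the formal immersion step: verifying that the cotangent map $\iota^*$ at the chosen divisor has rank $3$ in the space of holomorphic differentials, which can fail at elliptic or rational isogeny summands of $J_1(21)$ and at primes of bad reduction, so the argument is sensitive to the choice of prime $p$ and basepoint. The computation of the $\Q$-rational torsion of $J_1(21)$, and verifying that the Mordell--Weil sieve leaves only the expected residue classes, is the other substantial piece of work.
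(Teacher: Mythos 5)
The paper itself does not prove this statement: it is imported from Najman \cite{najman16} (who found the point) and Derickx--Etropolski--van Hoeij--Morrow--Zureick-Brown \cite{Deg3Class} (who proved uniqueness), so there is no internal proof to compare against; measured against the cited literature, your outline is essentially the strategy of \cite{Deg3Class}. Indeed, $X_1(21)$ has genus $5$ and $\Q$-gonality $4$ \cite{derickxVH}, $J_1(21)(\Q)$ is finite (analytic rank $0$ for each simple factor plus Kolyvagin--Logachev), and cubic points are classified by pushing $X_1(21)^{(3)}(\Q)$ into the finite Mordell--Weil group via Abel--Jacobi. Three refinements to your sketch. First, gonality $>3$ alone does not rule out infinitely many cubic points: they can also be swept out by a degree-$3$ map to a positive-rank elliptic curve; your rank-$0$ verification closes that loophole, since with $J_1(21)(\Q)$ finite the set $X_1(21)^{(3)}(\Q)$ can only be infinite if some degree-$3$ class has $h^0 \geq 2$, i.e.\ a rational $g^1_3$, which gonality $4$ forbids. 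Second, the endgame at this level is carried out in \cite{Deg3Class} not by a formal immersion argument in each residue disk but by computing the rational torsion of $J_1(21)$ exactly and then, for each of the finitely many classes, computing the Riemann--Roch space to decide whether it contains an effective degree-$3$ divisor and whether that divisor is cuspidal; the formal-immersion/Mordell--Weil-sieve machinery you describe is the alternative used at other levels in \cite{Deg3Class}, so the obstacle you flag (cotangent rank at isogeny factors, choice of prime) is genuine but avoidable here. Third, your expected answer of ``a single non-cuspidal cubic divisor'' is slightly off: the curve {\lmfdbec{162}{c}{3}} over $\Q(\zeta_9)^+$ has torsion exactly $\Z/21\Z$, so the points $aP$ for $a \in (\Z/21\Z)^\times/\{\pm 1\}$ yield six distinct non-cuspidal closed points of degree $3$ on $X_1(21)$, all with $j$-invariant $-140625/8$; the uniqueness asserted in the theorem is uniqueness of the elliptic curve (equivalently of the fiber of the $j$-line), not of the closed point, and your enumeration step must be prepared to find all six.
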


Phrased another way, this means the elliptic curve {\lmfdbec{162}{c}{3}} corresponds to a sporadic point of degree 3 on the modular curve $X_1(21)$. Recall $X_1(N)$ is an algebraic curve over $\Q$ whose non-cuspidal points parametrize isomorphism classes of pairs of an elliptic curve together with a distinguished point of order $N$. We say a point $x$ on a curve defined over a number field is \textbf{sporadic} if there are only finitely many points of degree at most $\deg(x)$. By definition, such points cannot belong to an infinite parameterized family of points of the same degree, making them more difficult to detect using standard tools. Our lack of understanding of sporadic points on the modular curves $X_1(N)$ has made it difficult to extend the classification of torsion subgroups beyond cubic fields.

Inspired by this, we seek to characterize the elliptic curves capable of producing sporadic points on $X_1(N)$, in search of some sort of framework which might explain their existence. Moreover, this problem is inextricably linked to Serre's Uniformity Conjecture on Galois representations of elliptic curves. We say $j$ is a \textbf{sporadic $j$-invariant} if there exists a sporadic point $x\in X_1(N)$ such that $j=j(x)$. So, for example, by Theorem \ref{NajmanCurve}, we see that $-140625/8$ is a sporadic $j$-invariant, as is any $j$-invariant associated to an elliptic curve with complex multiplication (CM) by \cite[Theorem 7.1]{BELOV}. Work of the first author in collaboration with Ejder, Liu, Odumodu, Viray \cite{BELOV} shows that there are only finitely many sporadic\footnote{In fact, \cite{BELOV} shows there are only finitely many \emph{isolated} $j$-invariants in $\Q$. The set of all isolated points strictly contains the set of all sporadic points, so the claim about sporadic $j$-invariants is immediate.} $j$-invariants in $\Q$, assuming the following conjecture; such a result is known unconditionally for points of odd degree \cite{OddDeg}.
\begin{conj}[Uniformity Conjecture] There exists a constant such that for all non-CM elliptic curves $E/\Q$, the mod $p$ Galois representation of $E$ is surjective for all $p>C$.

\end{conj}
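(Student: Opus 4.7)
The Uniformity Conjecture remains one of the central open problems in the arithmetic of elliptic curves; what follows is more a survey of the classical strategy than a genuine proof sketch. The plan is to follow Mazur's ``Program B'': invoke Serre's classification of the proper maximal subgroups of $\GL_2(\F_p)$ to reduce the conjecture to showing that for all sufficiently large primes $p$, there are no non-cuspidal, non-CM rational points on any of the modular curves $X_0(p)$, $X_{\textup{sp}}^+(p)$, $X_{\textup{ns}}^+(p)$, or those associated with an exceptional image in $\GL_2(\F_p)$.

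First I would dispose of the exceptional case by an elementary order argument: the image must contain elements coming from inertia at primes of bad reduction of $E$, and one checks that this is incompatible with the bounded-order exceptional subgroups once $p$ is large. Next, Mazur's theorem on rational isogenies of prime degree rules out the Borel case for $p > 37$. The split Cartan case would then be handled by the Bilu--Parent--Rebolledo theorem, which combines Runge's method with an analysis of the winding quotient to show that $X_{\textup{sp}}^+(p)$ has no unexpected rational points for $p > 13$.

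This leaves the normalizer-of-non-split Cartan case, which is the principal obstacle and for which no uniform result is currently known. The natural attack is via Chabauty's method applied to $X_{\textup{ns}}^+(p)$ or to a suitable quotient of its Jacobian, as has been carried out for individual small primes; the barrier to uniformity is the need for simultaneous control on the Mordell--Weil rank of an appropriate quotient of $J_{\textup{ns}}^+(p)$ together with a matching lower bound on its analytic rank. A route complementary to the classical one, and in the spirit of this paper, would be first to establish a quantitative finiteness theorem for sporadic $j$-invariants on $X_1(N)$ and then to exploit the covering relations between $X_1(N)$ and the modular curves above; the stated reduction of Uniformity to a strengthened finiteness statement for sporadic points indicates that this alternative, while still very difficult, is viable.
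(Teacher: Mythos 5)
This statement is Conjecture 1 of the paper, i.e.\ an \emph{open conjecture}: the paper contains no proof of it, so there is no internal argument to measure your attempt against. What the paper actually proves about this statement (Theorem \ref{Thm:SerreConnection}, proved in \S 3) is a reduction in the opposite direction: \emph{if} the non-CM $\Q$-curves giving rise to sporadic points on $X_1(p^2)$ lie in finitely many $\overline{\Q}$-isogeny classes as $p$ varies, \emph{then} Uniformity holds. The mechanism there is that for $p>37$ a non-surjective $\rho_{E,p}$ has image inside $C_{ns}^+(p)$ (by Zywina's classification), so $[\Q(E[p]):\Q] \mid 2(p^2-1)$; combined with Abramovich's gonality bound $\frac{7}{800}D_\Gamma \leq d_\C(X_\Gamma)$ this makes the induced point on $X(p)$, and a point of degree at most $2p(p^2-1)$ on $X_1(p^2)$ attached to an isogenous curve with a rational $p^2$-isogeny, sporadic for $p$ large; infinitely many non-surjective primes would then force $\Q$-curves with sporadic points in infinitely many geometric isogeny classes, contradicting the hypothesis. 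Note also that the hypothesis of that theorem remains open: the paper's main result, Theorem \ref{thm:FiniteIsogenyClasses}, only treats points of \emph{odd} degree, while the points manufactured in \S 3 can have even degree, so the paper deliberately does not claim the conjecture.

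As for your outline: you correctly and candidly present it as a survey of Mazur's Program B rather than a proof, and the genuine gap is exactly the one you name---the normalizer-of-non-split-Cartan case, for which no uniform result exists, and nothing in your sketch (or in this paper) closes it. Two factual corrections to the parts you treat as settled: the exceptional (projectively $A_4$, $S_4$, $A_5$) images are classically excluded for large $p$ by Serre's analysis of the image of \emph{tame inertia at $p$}, whose elements have order growing with $p$, not by inertia at primes of bad reduction; and the split Cartan case requires, beyond Bilu--Parent--Rebolledo (which handles $p\geq 11$, $p\neq 13$), the quadratic Chabauty computation of Balakrishnan--Dogra--M\"uller--Tuitman--Vonk for $p=13$. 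Finally, be careful with the direction of the ``alternative route'' in your last paragraph: the paper shows a strengthened sporadic-point finiteness statement \emph{implies} Uniformity, so that route merely trades one open problem for another; it does not constitute progress toward a proof until the finiteness statement is established unconditionally for points of arbitrary degree.
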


\noindent Conjecture 1 originated with a question of Serre \cite{serre72}, but has since been formally conjectured by both Zywina \cite{ZywinaImages} and Sutherland \cite{sutherland}. A wealth of computational evidence and partial theoretical progress which supports an affirmative answer. See for example \cite{BP11,BPR13,BalakrishnanEtAl,ZywinaImages,sutherland,lemosTrans,lemosZ}.

While Serre's Uniformity Conjecture implies that there are only finitely many sporadic $j$-invariants in $\Q$, showing that the same holds true for all sporadic $j$-invariants associated to non-CM $\Q$-curves would actually imply Serre's Uniformity Conjecture. Recall a $\Q$-curve is any elliptic curve isogenous (over $\overline{\Q}$) to its Galois conjugates. This is a property enjoyed by CM elliptic curves and elliptic curves with rational $j$-invariant, though the collection of all $\Q$-curves properly contains these sets.

\begin{thm} \label{Thm:SerreConnection}
Suppose there exist only finitely many isogeny classes containing non-CM $\Q$-curves (up to isomorphism over $\overline{\Q}$) that give rise to a sporadic point on $X_1(p^2)$ for some prime $p$. Then Serre's Uniformity Conjecture holds.
\end{thm}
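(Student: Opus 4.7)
The plan is to prove the contrapositive. Suppose Serre's Uniformity Conjecture fails: for every constant $C$ there exist a prime $p > C$ and a non-CM elliptic curve $E_p/\Q$ whose mod $p$ Galois representation is non-surjective. Since every $E/\Q$ is trivially its own Galois conjugate, each $E_p$ is automatically a (non-CM) $\Q$-curve. I would extract from each such pair $(E_p,p)$ (for $p$ sufficiently large) a sporadic point on $X_1(p^2)$ in the $\overline{\Q}$-isogeny class of $E_p$, and then show that the resulting isogeny classes are infinite in number, contradicting the hypothesis.

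For $p$ large enough, Mazur's rational isogeny theorem eliminates the Borel case and the classification of exceptional images (projectively $A_4$, $S_4$, or $A_5$) handles the remaining maximal subgroups, forcing the image of $\rho_{E_p,p}$ to lie in the normalizer of a (split or non-split) Cartan subgroup of $\GL_2(\F_p)$. I would then pick a point $P \in E_p[p^2]$ of exact order $p^2$ and bound $[\Q(P):\Q]$ by an orbit-stabilizer argument: the image of $\rho_{E_p,p^2}$ factors through the preimage of the Cartan normalizer under reduction mod $p$, and passing to the index-two subgroup fixing the Cartan (corresponding to a quadratic extension of $\Q$), an analysis of the action on $E_p[p^2]$ as a rank-one module over $\F_{p^2}[\varepsilon]/(\varepsilon^2)$ in the non-split case or over $\F_p[\varepsilon]/(\varepsilon^2) \oplus \F_p[\varepsilon]/(\varepsilon^2)$ in the split case should yield $[\Q(P):\Q] = O(p^3)$. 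Since the gonality of $X_1(p^2)$ grows at least linearly in $[\PSL_2(\Z):\Gamma_1(p^2)] \asymp p^4$, the associated closed point has degree asymptotically well below the gonality. Sporadicity would then follow from Frey's theorem: an infinite family of degree-$d$ points would require a degree $\leq 2d$ map either to $\PP^1$ (excluded by the gonality estimate) or to a positive-rank abelian variety, the latter being controllable via an analysis of the rank-positive simple factors of $J_1(p^2)$ and the covering degrees available.

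To count isogeny classes, I invoke Serre's open image theorem: each fixed non-CM elliptic curve over a number field has surjective mod $p$ representation for all but finitely many primes $p$, and this property is invariant under $\overline{\Q}$-isogeny since $\End^0$ is preserved. Hence each $\overline{\Q}$-isogeny class contributes only finitely many primes of non-surjectivity, so failure of the Uniformity Conjecture at infinitely many primes forces infinitely many distinct isogeny classes, contradicting the hypothesis. The principal obstacle is the sporadicity step, especially in the non-split Cartan case where $E_p[p]$ is irreducible over $\Q$ and the naive orbit bound on a point of order $p^2$ is only $O(p^4)$: one must carefully exploit the mod $p^2$ module structure to beat the gonality bound, and then rule out the existence of low-degree maps from $X_1(p^2)$ to positive-rank abelian varieties, which is the genuinely delicate geometric input.
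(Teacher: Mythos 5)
Your overall skeleton (contrapositive, reduction to Cartan normalizers for large $p$, Abramovich's gonality bound plus Frey, and a count of isogeny classes at the end) matches the paper, and two side remarks first: the worry about low-degree maps to positive-rank abelian varieties is a red herring, since \cite[Prop.~2]{frey} already shows any point of degree $< d_\C/2$ is sporadic, with no rank analysis of $J_1(p^2)$ needed; and the isogeny-class count is more cleanly done as in the paper, via the fact that a single $\overline{\Q}$-isogeny class contains only finitely many rational $j$-invariants (Shafarevich plus Lemma \ref{lem:IsogenyResidueField}), rather than your ``surjectivity is isogeny-invariant because $\End^0$ is preserved,'' which as stated only shows non-CM is isogeny-invariant. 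The genuine gap is the central claim $[\Q(P):\Q]=O(p^3)$ for a point $P$ of exact order $p^2$ on $E_p$ itself. For $p>37$ only the \emph{non-split} case survives (the paper cites \cite[Thm.~1.11]{ZywinaImages}; split Cartan normalizers are eliminated for large $p$), and there your module-theoretic refinement cannot work for a structural reason: $E_p[p^2]$ is free of rank one over $W/p^2W$ ($W$ the unramified quadratic extension of $\Z_p$), its points of exact order $p^2$ are precisely the $p^4-p^2$ generators, and the unit group $(W/p^2W)^*$ acts \emph{simply transitively} on them. So even under the most favorable (and unjustified) assumption that $\im \rho_{E_p,p^2}$ lies in the normalizer of the non-split Cartan modulo $p^2$ --- the mod-$p$ hypothesis in fact controls nothing in the kernel $I+pM_2(\F_p)$ of reduction --- every point of order $p^2$ on $E_p$ has degree $\asymp p^4$, the same order as $\gon(X_1(p^2))$ and far above the Frey threshold $\tfrac{7}{3200}p^2(p^2-1)$. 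Your $O(p^3)$ heuristic is correct only in the split case, which is exactly the case that does not occur; the obstacle you flag as ``principal'' is fatal to this route.

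The paper's fix is to change the curve, which is precisely the flexibility your first sentence claims but never uses: the hypothesis only requires a sporadic point arising from \emph{some} $\Q$-curve in the isogeny class of $E_p$. Over $L \coloneqq \Q(E_p[p])$, of degree at most $\#C_{ns}^+(p)=2(p^2-1)$, every subgroup of $E_p[p]$ is rational, so $E_p$ has two independent $L$-rational $p$-isogenies and is therefore $L$-isogenous to a curve $E'$ carrying an $L$-rational cyclic $p^2$-isogeny $f$ with an $L$-rational point of order $p$ in $\ker f$. This forces $\im \rho_{E'/L,p^2}$ into the matrices $\bigl(\begin{smallmatrix} 1+pt & * \\ 0 & 1+pk \end{smallmatrix}\bigr)$, and the fixed field $K'$ of the subgroup with $t=0$ satisfies $[K':L]\le p$, hence $[K':\Q]\le 2p(p^2-1)=O(p^3)$, with $E'(K')$ containing a point of order $p^2$. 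That point on $X_1(p^2)$ is sporadic for $p\gg 0$ by Abramovich--Frey and corresponds to the $\Q$-curve $E'$ isogenous to $E_p$; running your counting step on these points then completes the proof. Passing to the isogenous curve $E'$ to trade the transitive non-split module structure for a Borel-type structure mod $p^2$ is the missing idea.
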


\noindent This motivates the study torsion points within isogeny classes of $\Q$-curves, which we initiate in the present work. In particular, we pose the following question, which extends those raised in \cite{BELOV}.

\begin{question}
Do all non-CM $\Q$-curves giving rise to sporadic points on any modular curve $X_1(N)$ belong to only finitely many $\overline{\Q}$-isogney classes, even as we allow $N$ to range over all positive integers?
\end{question}

In our main result, we answer Question 1 for sporadic points of odd degree, showing that the elliptic curve  {\lmfdbec{162}{c}{3}} actually plays a much larger role in explaining sporadic points on the modular curves $X_1(N)$ than what is apparent from Theorem \ref{NajmanCurve}. Not only does this elliptic curve give a non-cuspidal sporadic point of least possible degree, but it is also isogenous to any other non-CM $\Q$-curve corresponding to a sporadic point of odd degree on $X_1(N)$ for any positive integer $N$. Since any sporadic point on $X_1(N)$ must have degree less than its $\Q$-gonality,\footnote{Recall the $k$-gonality of a curve $C$ over a number field $k$ is the least degree of a non-constant rational map $f: C \rightarrow \mathbb{P}^1_k$. Hilbert's irreducibility theorem \cite[Ch.9]{serre97} implies $f^{-1}(\mathbb{P}^1(k))$ contains infinitely many degree $d$ points.} this is a consequence of the following more general result.

\begin{thm} \label{thm:FiniteIsogenyClasses}
Let $x \in X_1(N)$ be a point of odd degree corresponding to a non-CM $\Q$-curve $E$. If $\deg(x) < \gon_{\Q}(X_1(N))$, then $E$ is $\overline{\Q}$-isogenous to an elliptic curve with the $j$-invariant $-140625/8$. In particular, any non-CM $\Q$-curve giving rise to a sporadic point of odd degree on $X_1(N)$ is isogenous (over $\overline{\Q}$) to the elliptic curve {\lmfdbec{162}{c}{3}}.
\end{thm}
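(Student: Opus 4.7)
The plan is to reduce the problem to the case of an elliptic curve with rational $j$-invariant in the $\overline{\Q}$-isogeny class of $E$, after which the authors' earlier unconditional classification of sporadic odd-degree points with rational $j$-invariant can be invoked. First I would apply the structure theory of non-CM $\Q$-curves: by a theorem of Elkies, every non-CM $\Q$-curve is $\overline{\Q}$-isogenous to an elliptic curve defined over a polyquadratic field $K$ (a subfield of the compositum of all quadratic extensions of $\Q$), so $[K:\Q]$ is a power of $2$. Combining this with Ribet's description of the associated $\GL_2$-type abelian variety $A/\Q$ attached to the isogeny class of $E$ controls how Galois conjugation interacts with the isogeny class and with the isogeny characters that record the degrees of the maps $E \to E^{\sigma}$.

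The key technical step is to use the odd-degree hypothesis on $\deg(x)$, together with the constraint $\deg(x)<\gon_\Q(X_1(N))$, to upgrade Elkies' descent and obtain an elliptic curve $E_0$ with $j(E_0)\in\Q$ in the $\overline{\Q}$-isogeny class of $E$. The heuristic is that the odd-degree torsion on $E$, when transported along the $\overline{\Q}$-isogeny to a polyquadratic model, is incompatible with a nontrivial $2$-power field of definition: an odd-degree residue field cannot support Galois-equivariant data that genuinely requires a polyquadratic extension, forcing a further descent to $\Q$. Once $E_0/\Q$ is in hand, the $\overline{\Q}$-isogeny $E \to E_0$ transfers the sporadic point $x$ to a point of odd degree on some $X_1(M)$ corresponding to $E_0$, with the degree still strictly less than $\gon_\Q(X_1(M))$.

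At this stage I would invoke the authors' prior work \cite{OddDeg}, which shows unconditionally that there are only finitely many rational $j$-invariants that give sporadic points of odd degree on any $X_1(M)$, and supplies an explicit list. Running through the list and computing $\overline{\Q}$-isogeny classes, only the class of the Najman curve (with $j=-140625/8$) is consistent with containing a non-CM $\Q$-curve giving a sporadic odd-degree point. The main obstacle will be the descent step: extracting a rational $j$-invariant from the polyquadratic model via the odd-degree hypothesis requires a careful analysis of the mod-$\ell$ Galois representations of non-CM $\Q$-curves and of how the isogeny characters behave on odd-degree subfields. Ensuring that the level change $N\leadsto M$ and the transferred point $x\leadsto x_0$ still satisfy the gonality inequality is an additional technical concern that may require quantitative isogeny-torsion comparisons within the isogeny class.
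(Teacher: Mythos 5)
Your first step is essentially correct but over-engineered: you do not need to ``upgrade'' Elkies' polyquadratic descent yourself, since \Cref{thm:CN} (Cremona--Najman) already gives exactly this. Because $\Q(j(E)) \subseteq \Q(x)$ and $\deg(x)$ is odd, $\Q(j(E))$ has no quadratic subfields, so $E$ is isogenous \emph{over $\Q(x)$} to a curve with rational $j$-invariant --- no gonality hypothesis and no new analysis of isogeny characters is needed for this. The genuine gap is in your second half. You propose to transfer $x$ along the isogeny to a point $x_0$ of odd degree on some $X_1(M)$ attached to the rational-$j$ curve $E_0$, ``with the degree still strictly less than $\gon_\Q(X_1(M))$,'' and then invoke the classification of \cite{OddDeg}. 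This transfer fails on both counts. First, torsion does not pass along isogenies without degree growth: results like \Cref{cor:IsogenyCor} only produce a point of order $p$ on $E_0$ over an extension of $\Q(x)$ of degree dividing $p$ (or inside $\Q(x)(\zeta_p)$), so $\deg(x_0)$ can exceed $\deg(x)$ by a factor of $p$ for each prime involved, and the inequality $\deg(x_0)<\gon_\Q(X_1(M))$ is simply not inherited. Second, and more fundamentally, \cite{OddDeg} classifies \emph{sporadic} (indeed isolated) odd-degree points with rational $j$-invariant; your transferred point $x_0$ need not be sporadic at all, and in general the rational-$j$ curve in the isogeny class does \emph{not} admit a sporadic point --- the sporadic point lives on the $\Q$-curve $E$, whose $j$-invariant can have arbitrarily large odd degree. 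Without sporadicity of $x_0$ there is no finite list to consult, so the concluding lookup step has nothing to apply to. This is precisely why the paper emphasizes that its theorem removes the bound on $[\Q(j):\Q]$, which the bounded-degree results of \cite{BELOV} plus \cite{OddDeg} cannot do.

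What the paper actually does, in place of your transfer-and-lookup, is: (i) prove divisibility conditions on $[\Q(x):\Q]$ for prime-power torsion on odd-degree $\Q$-curves (\Cref{Prop:div}), obtained via index computations for $p$-adic Galois images of the isogenous rational-$j$ curve; (ii) combine these with Derickx--van Hoeij gonality bounds to pin down $\deg(x)$ exactly and restrict $N$ to the forms $2^ap^k$, $p\in\{3,5,7,11,13\}$, \emph{unless} $E$ lies in the isogeny class with a rational cyclic $21$-isogeny (\Cref{prop:combine2}) --- this exceptional class is where $j=-140625/8$ enters, not via any list from \cite{OddDeg}; and (iii) run a case analysis in which the putative low-degree point forces either a contradiction on a small modular curve with degrees controlled by isogeny-kernel swaps (e.g.\ \Cref{lem:ReplaceIsogeny}), or, for $p=3$, an unusual entanglement $\Q(E''[2])\subseteq\Q(E''[3^\infty])$ that puts $E''$ on one of ten explicit genus~$\geq 2$ entanglement modular curves whose rational points are computed outright, with the surviving $j$-invariants $1792$ and $406749952$ eliminated by \Cref{prop:1792}. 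These computations are irreplaceable content of the proof; your proposal, as written, has no mechanism that could substitute for them.
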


The proof builds on recent work of Cremona and the second author \cite{CremonaNajmanQCurve} (which in turn builds on work of Elkies \cite{elkies}). A crucial result at the foundation of our approach is that any non-CM $\Q$-curve defined over a number field of odd degree is isogenous to an elliptic curve with rational $j$-invariant \cite[Theorem 2.7]{CremonaNajmanQCurve}, making it possible to use information about the Galois representations of elliptic curves over $\Q$ to deduce results for the original $\Q$-curve. For example, in Theorem 1.1 of \cite{CremonaNajmanQCurve}, this isogeny connection is used to show that there exists a point of odd degree on $X_1(N)$ corresponding to a non-CM $\Q$-curve only if $\Supp(N) \subseteq \{2,3,5,7,11,13,17,37\}$. Paired with results of \cite{BELOV}, we immediately see that there will be only finitely many $\Q$-curves with $j$-invariant in an extension of \emph{bounded} degree giving rise to a sporadic point of odd degree on any modular curve of the form $X_1(N)$.\footnote{Indeed, there exists a uniform bound on the level of the $m$-adic Galois representation associated to all non-CM elliptic curves over number fields of fixed degree \cite[Prop. 6.1]{BELOV}. Since any curve by definition can have only finitely many sporadic points, the claim follows from Proposition 5.8 and Theorem 4.3 of \cite{BELOV}.} Thus the strength of Theorem \ref{thm:FiniteIsogenyClasses} is in showing that these sporadic $j$-invariants lie in finitely many isogeny classes, even if we remove the bound on the degree of $\Q(j)$.

In the case where $N$ is a power of a single prime, bounds on the $\Q$-gonality of $X_1(N)$ due to Derickx and van Hoeij \cite{derickxVH} are strong enough to imply that there are no sporadic points of odd degree corresponding to non-CM $\Q$-curves. This is in direct contrast to the CM case.

\begin{thm} \label{Thm:main}
Let $p$ be a prime number and $k$ a positive integer. If $x=[E,P] \in X_1(p^k)$ is a point of odd degree corresponding to a $\Q$-curve with $\deg(x) < \gon_{\Q}(X_1(p^k))$, then $E$ has complex multiplication. Moreover, for any prime $p \equiv 3 \pmod{4}$ and $k$ sufficiently large, there exist sporadic CM points of odd degree on $X_1(p^k)$.
\end{thm}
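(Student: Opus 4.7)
I would prove the two assertions of the theorem separately.

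For \emph{Part 1} (no non-CM $\Q$-curve gives such a point), suppose for contradiction that $x = [E, P] \in X_1(p^k)$ has odd degree $< \gon_\Q(X_1(p^k))$ with $E$ non-CM. By Theorem \ref{thm:FiniteIsogenyClasses}, $j(E)$ lies in the finite $\overline{\Q}$-isogeny class of $E_0 := \lmfdbec{162}{c}{3}$. The plan is to exploit the explicit isogeny and Galois image data for this class (see \cite{CremonaNajmanQCurve}): the only primes at which any curve in the class has non-surjective mod $p$ representation are $p \in \{3, 7\}$. For $p \notin \{3, 7\}$, mod $p$ surjectivity forces every Galois orbit of a point of order $p$ to have even length (since $(p^2-1)/2$ is divisible by $4$ for $p$ odd), contradicting odd degree. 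For $p \in \{3, 7\}$, I would tabulate the minimum degree of an odd-degree realization of $p^k$-torsion on each curve in the class and compare to the Derickx--van Hoeij gonality bounds \cite{derickxVH}; in each case the gonality dominates, producing a contradiction.

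For \emph{Part 2} (existence of sporadic CM points), fix $p \equiv 3 \pmod 4$ with $p \geq 7$ and set $K := \Q(\sqrt{-p})$. By genus theory, $h_K$ is odd. I would take $E$ to be a CM elliptic curve with $\End_{\overline{\Q}}(E) = \OO_K$, defined over $L := \Q(j(E))$ of odd degree $h_K$. Writing $p\OO_K = \mathfrak{p}^2$ and identifying $E[p^k] \cong \OO_K/p^k\OO_K$ as an $\OO_K$-module, I would choose a point $P$ corresponding to $\alpha$ with $v_\mathfrak{p}(\alpha) = 1$. Classical CM theory, together with the fact that $\mathfrak{p}$ is Galois-stable, identifies the mod $p^k$ image of $G_L$ as the normalizer of the Cartan $(\OO_K/p^k)^*$, and a direct orbit count gives $[L(P):L] = p^{2k-2}(p-1)$. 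Since $\Aut(E) = \{\pm 1\}$, the moduli point $x = [E,P] \in X_1(p^k)$ has residue field $L(x(P))$, so
\[
\deg(x) \;=\; h_K \cdot \tfrac{1}{2}\, p^{2k-2}(p-1).
\]
Each factor is odd: $h_K$ by genus theory, $p^{2k-2}$ trivially, and $(p-1)/2$ because $p \equiv 3 \pmod 4$. Hence $\deg(x)$ is odd. Invoking a Derickx--van Hoeij lower bound $\gon_\Q(X_1(p^k)) \geq c(p)\, p^{2k}$ and verifying $c(p) > h_K(p-1)/(2p^2)$, I would conclude $\deg(x) < \gon_\Q(X_1(p^k))$ for $k$ sufficiently large, so $x$ is sporadic. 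The edge case $p = 3$ requires replacing $\OO_K = \Z[\omega]$ with its conductor-$3$ suborder to eliminate the sextic roots of unity from $\Aut(E)$; the remaining argument is parallel.

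The principal obstacle will lie in Part 2: establishing the precise image of the mod $p^k$ Galois representation (particularly that $\mathfrak{p}$ remains Galois-stable and that the image fills the Cartan up to finite index controllable in $k$) and carrying out the orbit computation in the ramified setting. This step will invoke explicit CM Galois representation results in the style of those used in \cite{BELOV}. Once these are in place, both the parity of $\deg(x)$ and the comparison with the gonality are essentially arithmetic checks.
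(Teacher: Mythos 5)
Both halves of your proposal have genuine gaps. For Part 1, the appeal to Theorem \ref{thm:FiniteIsogenyClasses} is circular: in this paper that theorem is proved in Section 7 \emph{using} Theorem \ref{Thm:main} (together with Proposition \ref{prop:combine2}) to reduce to levels $N=2p^b$ or $4p^b$, so it cannot serve as an input here. Moreover, the $\overline{\Q}$-isogeny class of a non-CM curve contains infinitely many $j$-invariants (only the rational $j$-invariants in it form a finite set), so your plan to ``tabulate the minimum degree of an odd-degree realization of $p^k$-torsion on each curve in the class,'' uniformly in $k$, is not a finite computation; controlling all such degrees at once is exactly the role of the $p$-adic index bounds in Lemma \ref{lem:PrelimDiv} and Proposition \ref{Prop:index}. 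The paper's actual argument avoids isogeny-class data entirely: Proposition \ref{Prop:div} forces $p\in\{2,3,5,7,11,13\}$ and an explicit divisor of $\deg(x)$ (e.g.\ $3^{2k-4}$ for $p=3$), while $\gon_\Q(X_1(p^k))\leq \gon_\Q(X_1(p^{k_0}))\cdot\deg\bigl(X_1(p^k)\to X_1(p^{k_0})\bigr)$ with the small-level gonalities of \cite{derickxVH}; these two bounds collide, giving $\deg(x)\geq\gon_\Q(X_1(p^k))$ directly.

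Part 2 fails quantitatively in exactly the regime the theorem covers (``any $p\equiv 3\pmod 4$''). Your point $\alpha$ with $v_{\mathfrak{p}}(\alpha)=1$ has Cartan orbit of size $p^{2k-2}(p-1)$, so $\deg(x)=h_K\cdot\tfrac{p-1}{2}\,p^{2k-2}$, which is the \emph{fixed} fraction $h_K/(p+1)$ of $[\PSL_2(\Z):\Gamma_1(p^k)]=\tfrac{1}{2}p^{2k-2}(p^2-1)$, independent of $k$. Comparing with the Abramovich--Frey sporadicity threshold $\tfrac{7}{1600}[\PSL_2(\Z):\Gamma_1(p^k)]$ (from \cite{abramovich} and \cite[Prop.~2]{frey}) reduces your final check to $h_K<\tfrac{7}{1600}(p+1)$, which fails for all $p<228$ even when $h_K=1$; for $p=3$ your degree is a full quarter of the index, far above the gonality, and increasing $k$ never helps since both sides scale identically in $k$. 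This is precisely why the paper instead invokes \cite[Thm 2.6]{BP}, which supplies odd-degree CM points of degree $h_K\cdot\tfrac{p-1}{2}\,p^{\delta}$ with $\delta=\lfloor 3k/2\rfloor-1$: that exponent grows like $3k/2$ rather than your $2k-2$, strictly slower than the $p^{2k}$ growth of the gonality bound, so for each fixed $p$ the inequality holds once $k$ is large. The saving comes from a subtler choice of point and base field (over the optimal odd-degree extension the Galois image is a proper subgroup of the normalizer of the Cartan, shrinking the orbit), not from the full-normalizer orbit count you perform; as written, your construction establishes the second assertion only for $p$ sufficiently large.
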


\noindent Our proof shows that there are infinitely many CM $j$-invariants producing sporadic points on $X_1(p^k)$ of odd degree, and that they necessarily belong to infinitely many distinct $\overline{\Q}$-isogeny classes; see Remark \ref{remark4.2}. This shows that the ``non-CM" assumption in Theorem \ref{thm:FiniteIsogenyClasses} is necessary if one wishes to characterize a set of $\Q$-curves producing sporadic points of odd degree which belong to only finitely many isogeny classes.

Even though we have established that all non-CM $\Q$-curves corresponding to a sporadic point on $X_1(N)$ of odd degree lie in a single $\overline{\Q}$-isogeny class, it remains to determine whether there can be infinitely sporadic $j$-invariants within that isogeny class. Our final question is the following:

\begin{question}
Does there exist a non-CM $\overline{\Q}$-isogeny class containing infinitely many sporadic $j$-invariants?
\end{question}

\noindent In Proposition \ref{Prop:8.1}, we show that the answer is yes if there exists a sporadic point of sufficiently low degree associated to a curve in the $\overline{\Q}$-isogeny class. However, as the only known examples of points satisfying this degree condition correspond to CM elliptic curves, it is not clear whether one should expect the existence of such a point.

\subsection{Related Work.} While sporadic points on more general modular curves are certainly of interest as they pertain to the classification of Galois representations of elliptic curves (see, for example \cite{BalakrishnanEtAl}) or the modularity of elliptic curves (see, for example, \cite{FLS15,DNS20,Box21}), for this summary we will restrict our attention to modular curves  $X_1(N)$ and $X_0(N)$ (which parametrizes elliptic curves with a cyclic $N$-isogeny). The first examples of sporadic points on modular curves of this form arise as points in $X_0(N)(\Q)$ in cases where this curve has genus greater than 0. Key contributions to this classification were made by Mazur \cite{mazur78} and Kenku \cite{Kenku79,Kenku80,Kenku80_2,Kenku81}, but see Table 4 in \cite{LRAnn} for a more complete list of references. The classification of quadratic and cubic points on modular curves $X_0(N)$ is the topic of several recent works, and many new examples of sporadic points have been discovered; see, for example \cite{OzmanSiksek19,Box_quad21}.
For modular curves $X_1(N)$, the least possible degree of a non-cuspidal sporadic point is 3, and, as mentioned above, there is a unique elliptic curve producing such a point which was first discovered by the second author \cite{najman16}. Others can be found in tables associated to work of Derickx and van Hoeij \cite{derickxVH}; note that any point in degree less than the $\Q$-gonality of the curve will be sporadic, provided the associated Jacobian has rank 0 over $\Q$. Sporadic points associated to elliptic curves with rational $j$-invariant were the topic of \cite{BELOV} and \cite{OddDeg}, and those corresponding to elliptic curves with supersingular reduction were studied in \cite{Smith2018}.

For a complete summary of what is known concerning sporadic CM points on modular curves, see work of Clark, Genao, Pollack, and Saia \cite{LeastCMDeg}.

\subsection{Outline.} We summarize relevant background information on Galois representations and isogenies of elliptic curves in $\S2$, along with some basic information concerning modular curves. Section 3 contains the proof of Theorem \ref{Thm:SerreConnection}. Our approach to Theorem \ref{thm:FiniteIsogenyClasses} relies on first establishing certain divisibility conditions for the existence of a rational torsion point on a non-CM $\Q$-curve defined over a number field of odd degree (see Proposition \ref{Prop:div}) which are obtained by studying torsion points within isogeny classes of elliptic curves. Theorem \ref{Thm:main} is a relatively quick consequence of these divisibility conditions and is proved in $\S5$. To complete the proof of Theorem \ref{thm:FiniteIsogenyClasses}, we first show that there exists a point of odd degree on $X_1(N)$ associated to a non-CM $\Q$-curve only for $N$ of a certain form, as in Proposition \ref{prop:combine2}. In Section 7, we show that any odd degree point $x \in X_1(N)$ with $\deg(x)<\gon_{\Q}(X_1(N))$ associated to a non-CM $\Q$-curve would force a low degree point on one of finitely many modular curves; see sections $7.1$ or $7.2$. In cases where these low degree points would arise on modular curves of the form $X_1(N)$, their existence can generally be ruled out by prior work. However, other cases require an analysis of certain modular curves which parametrize elliptic curves with unusual entanglement between their $2^a$- and $3^k$-torsion point fields. Explicit computations of the rational points on these entanglement modular curves appear in section 7.3, completing the proof of Theorem \ref{thm:FiniteIsogenyClasses}. Partial progress towards Question 2 appears in  $\S8$.

{We note that all code for Magma computations relevant to sections 7.2 and 7.3 is publicly available at
{\href{https://web.math.pmf.unizg.hr/~fnajman/code_QC.zip}{\url{https://web.math.pmf.unizg.hr/~fnajman/code_QC.zip}}}.}

\section*{Acknowledgements}
We thank Josha Box, Pete L. Clark, Davide Lombardo, Jackson Morrow, and Jeremy Rouse for helpful conversations. We also thank Pete L. Clark, Jackson Morrow, and Bianca Viray for helpful comments on an earlier draft.

The first author was partially supported by an A. J. Sterge Faculty Fellowship. {The second author was supported by the QuantiXLie Centre of Excellence, a
  project co-financed by the Croatian Government and European Union
  through the European Regional Development Fund - the Competitiveness
  and Cohesion Operational Programme (Grant KK.01.1.1.01.0004) and by
  the Croatian Science Foundation under the project
  no. IP-2018-01-1313.}

\section{Background and Notation}

\subsection{Conventions}
For a number field $k$ and algebraic closure $\overline{k}$, we let $\Gal_k \coloneqq \Gal(\overline{k}/k)$ denote the absolute Galois group of $F$. If $n \in \Z^+$, we use $\Supp(n)$ to denote the set of all prime numbers dividing $n$. If $n=p_1^{a_1} p_2^{a_2} \cdots p_r^{a_r}$ for distinct primes $p_i$, then $\ord_{p_i}(n)\coloneqq a_i$.

If $C$ is a curve defined over a number field $k$ and $x \in C$ is a closed point, then $k(x)$ denotes the residue field of $x$. By the degree of $x$ we mean the degree of $k(x)$ over $k$.

If $E$ is an elliptic curve defined over a number field $k$ and $P \in E(\overline{k})$, then $k(P)$ denotes the field extension of $k$ generated by the $x$- and $y$-coordinates of $P$. By $m$-isogeny of elliptic curves, we mean a cyclic isogeny of degree $m$. Specific elliptic curves are referred to by their LMFDB label.

For subgroups of $\GL_2(\Z_p)$, we generally use the notation of Sutherland and Zywina in \cite{SutherlandZywina}, though in a few cases for the prime $p=2$ we use that of Rouse and Zureick-Brown \cite{RouseDZB} for ease of reference.

\subsection{Galois Representations of Elliptic Curves.} For an elliptic curve $E$ defined over a number field $k$, the absolute Galois group $\Gal_k$ acts naturally on the set of all torsion points of $E$, denoted $E(\overline{k})_{\tors}$. This action is encoded in the adelic Galois representation associated to $E$:
\[
\rho_E: \Gal_k \rightarrow \Aut (E(\overline{k})_{\tors}) \cong \GL_2(\hat{\Z}).
\]
Through the isomorphism $\GL_2(\hat{\Z}) \cong \prod_{p \text{ prime}} \GL_2(\Z_p)$ and natural projection, we obtain the $m$-adic representation associated to $E$ for any integer $m$, denoted
\[
\rho_{E,m^{\infty}}: \Gal_k \rightarrow \prod_{p \text{ prime},  p \mid m} \GL_2(\Z_p).
\]
This representation describes the action of $\Gal_k$ on all points of $E$ of order $n$ with $\Supp(n) \subseteq \Supp(m)$. Via reduction mod $n$, we recover the mod $n$ Galois representation associated to $E$
\[
\rho_{E,n}: \Gal_k \rightarrow \GL_2(\Z/n\Z),
\]
which gives the action of $\Gal_k$ on $E[n]$, the set of points on $E$ of order dividing $n$. If $m$ and $n$ are relatively prime, we may combining reduction mod $n$ in some components with projection in others to define
\[
\rho_{E,n \cdot m^{\infty}}: \Gal_k \rightarrow \GL_2(\Z/n\Z) \times \GL_2(\Z_m).
\]
Here, $\Z_m \coloneqq \prod_{p \in \Supp(m)} \Z_p$.

If $E$ does not have complex multiplication, then Serre's Open Image Theorem \cite{serre72} states that the image of  $\rho_{E}$, denoted $\im \rho_E$, is open (and thus of finite index) in $\GL_2(\hat{\Z})$. In particular, the image of the $p$-adic Galois representation associated to a non-CM elliptic curve $E$ is of finite index in $\GL_2(\Z_p)$. The following proposition summarizes a few of the known constraints on this index in the case of elliptic curves over $\Q$.

\begin{prop} \label{Prop:index}
For any non-CM elliptic curve $E/\Q$,
\[
\ord_2([\GL_2(\Z_{2}): \im \rho_{E/\Q, 2^{\infty}}]) \leq 6.
\]
If $E/\Q$ is a non-CM elliptic curve with a rational cyclic $p$-isogeny for some odd prime $p$, then we have the following:
\begin{enumerate}
\item If $p \geq 7$, then $\ord_p([\GL_2(\Z_{p}): \im \rho_{E/\Q, p^{\infty}}])=0$.
\item If $p = 5$, then $\ord_5([\GL_2(\Z_{5}): \im \rho_{E/\Q, 5^{\infty}}]) =0$ unless $E/\Q$ has either a rational cyclic 25-isogeny or two independent 5-isogenies. In the latter cases, we have $\ord_5([\GL_2(\Z_{5}): \im \rho_{E/\Q, 5^{\infty}}])= 1$.
\item If $p = 3$, then $\ord_3([\GL_2(\Z_{3}): \im \rho_{E/\Q, 3^{\infty}}]) \leq 2$.
\end{enumerate}
\end{prop}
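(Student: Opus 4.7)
The plan is to treat the bound at each prime $p$ separately, since the $\ord_p$ of the index only depends on the $p$-adic component of the adelic image. For the $p=2$ claim, I would invoke the explicit classification of Rouse and Zureick-Brown of all possible 2-adic images attached to non-CM elliptic curves $E/\Q$ (up to conjugacy in $\GL_2(\Z_2)$). Inspecting their catalog of images and computing the 2-part of each index directly gives $\ord_2([\GL_2(\Z_2):\im \rho_{E/\Q,2^{\infty}}])\le 6$ in every case; this step is really just a citation, but it is the most computationally substantial input.

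For the odd $p$ statement, the key preliminary observation is that a rational cyclic $p$-isogeny forces $\im\rho_{E,p}$ into a Borel subgroup $B\le\GL_2(\F_p)$, and $[\GL_2(\F_p):B]=p+1$ is coprime to $p$. Hence any $p$-divisibility of the index of $\im \rho_{E/\Q,p^{\infty}}$ in $\GL_2(\Z_p)$ must arise from the kernel of reduction modulo $p$, i.e.\ from the failure of the mod-$p^k$ image to be the full preimage of $B$ for some $k\ge 2$.

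I would then split by the primes $p\ge 3$ at which a non-CM $E/\Q$ can have a rational cyclic $p$-isogeny, which by the Mazur--Kenku classification are exactly $p\in\{3,5,7,11,13,17,37\}$. For $p\in\{11,17,37\}$ the set of $j$-invariants is finite and explicit, and direct computation of the $p$-adic image on each representative yields $\ord_p=0$. For $p\in\{7,13\}$ a positive $\ord_p$ would produce a non-cuspidal rational point on a modular curve of level $p^2$ parametrizing a proper lift of the Borel image (such as $X_0(p^2)$ or a split/non-split Cartan cover), and these are ruled out either by Mazur's isogeny theorem or by the Bilu--Parent--Rebolledo results. For $p=5$ I would appeal to the Sutherland--Zywina classification of 5-adic images to check that $\ord_5\ge 1$ occurs precisely in the two described configurations (cyclic $25$-isogeny, or two independent $5$-isogenies), and then $\ord_5=1$ in both. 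For $p=3$ the analogous Sutherland--Zywina classification gives $\ord_3\le 2$ directly.

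The main obstacle is organizational rather than technical: each individual bound is known in the literature, but the argument has to thread together several distinct classifications (Rouse--Zureick-Brown at $p=2$; Mazur--Kenku for the possible isogeny degrees; Sutherland--Zywina for the $3$- and $5$-adic images; and direct or Bilu--Parent--Rebolledo arguments for $p=7,13$), and assembling them into a coherent case analysis is the bulk of the work. No single sub-bound is proved from scratch.
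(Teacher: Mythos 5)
Your $p=2$ step coincides with the paper's (it cites \cite[Cor.~1.3]{RouseDZB}), but for the odd primes the inputs you select do not actually prove the statement. The paper's own proof is a different set of citations: Greenberg \cite{greenberg2012} (Theorems 1 and 2, Remark 4.2.1) for $p=5$ and $p>7$; Greenberg--Rubin--Silverberg--Stoll \cite{greenberg2014} for $p=7$; and \cite[Cor.~1.10]{RSZ21} for $p=3$. The first gap in your outline is the appeal to Sutherland--Zywina at $p=3$ and $p=5$: the classification in \cite{SutherlandZywina} only covers images attached to \emph{infinitely many} $j$-invariants, i.e.\ modular curves of prime-power level with infinitely many rational points. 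It is silent about exceptional curves, which could a priori have $3$- or $5$-adic images of larger $p$-power index corresponding to sporadic rational points on positive-genus modular curves of level $27$ or $25$. So ``SZ gives $\ord_3 \leq 2$ directly'' and your $p=5$ dichotomy (in particular the exact value $\ord_5=1$, which requires an upper bound valid for \emph{every} such curve) do not follow from that reference; one needs results covering all curves, which is precisely what Greenberg's theorem and \cite{RSZ21} supply, the latter by determining the rational points on the relevant arithmetically maximal modular curves.

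The second, more serious gap is at $p=7,13$. If $\ord_p$ of the index is positive, the mod $p^2$ image lies in a subgroup of index divisible by $p$ in the full preimage $B \leq \GL_2(\Z/p^2\Z)$ of the Borel, but these subgroups are \emph{not} only the Borel mod $p^2$ (a rational $p^2$-isogeny, excluded by Mazur--Kenku) and the preimage of the split Cartan (two independent $p$-isogenies, likewise excluded for $p \geq 7$). There are further index-$p$ subgroups of $B$ with full determinant, cut out by requiring a suitable power product of the two diagonal isogeny characters mod $p^2$ to have order prime to $p$. The associated modular curves are degree-$p$ covers of $X_0(p)$ of neither $X_0$ nor Cartan type; since $X_0(7)$ and $X_0(13)$ have genus $0$, infinitely many elliptic curves must be handled uniformly, so your finite-list strategy does not apply, Mazur's isogeny theorem says nothing about them, and Bilu--Parent--Rebolledo is irrelevant here (a Borel subgroup is not contained in the normalizer of a Cartan, so those results never see an image containing a Borel). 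Ruling out exactly these character-type degenerations is the substance of Greenberg's class-field-theoretic analysis of the isogeny character for $p>7$ and of the computation of \cite{greenberg2014} at $p=7$, where Greenberg's method breaks down; your proposal has no substitute for this step. Relatedly, your preliminary reduction ``since $[\GL_2(\F_p):B]=p+1$ is prime to $p$, any $p$-divisibility of the index must come from the kernel of reduction mod $p$'' is already false: when the mod-$p$ image is a proper subgroup of the Borel (e.g.\ a split Cartan), the index picks up a factor of $p$ at level $p$ itself --- this is exactly the two-independent-$5$-isogenies exception in part (2) of the proposition. (Your treatment of $p \in \{11,17,37\}$ by direct computation on the finitely many $j$-invariants is fine in principle.)
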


\begin{proof}
For $p=5$ or $p > 7$, this follows from work of Greenberg (see Theorems 1 and 2, along with Remark 4.2.1 in \cite{greenberg2012}). The case of $p=7$ is addressed in work of Greenberg, Rubin, Silverberg, Stoll \cite{greenberg2014}. For $p=3$, the result follows from Corollary 1.10 in \cite{RSZ21}. The case of $p=2$ is given by Corollary 1.3 in work of Rouse and Zureick-Brown \cite{RouseDZB}.
\end{proof}

A further implication of Serre's Open Image Theorem is the existence of a positive integer $N$ such that $\im \rho_{E} = \pi^{-1}(\im \rho_{E,N})$, where $\pi$ denotes the natural reduction map. The smallest such $N$ is called the level of the adelic Galois representation. In a similar way,  for any positive integer $m$, we defined the level of the $m$-adic representation to be the least positive integer $N$ such that $\im \rho_{E,m^{\infty}} = \pi^{-1}(\im \rho_{E,N})$.

\subsection{Isogenies of Elliptic Curves}

For an elliptic curve $E$ defined over a number field $k$, we say the subgroup $C$ of $E$ is $k$-rational if $\sigma(C)=C$ for all $\sigma \in \Gal_k$. That is, for all $P\in C$, we have $\sigma(P) \in C$ (though $\sigma$ need not fix $P$ itself). There is a one-to-one correspondence between finite subgroups $C$ of $E$ and isomorphism classes of isogenies $\varphi:E \rightarrow E'$ given by associating $C$ with the unique isogeny having $\ker(\varphi)=C$, and $\varphi$ is defined over $k$ if and only if $C$ is $k$-rational. See $\S \text{III}.4$ of \cite{silverman} for details. For any two non-CM elliptic curves $E$, $E'$ isogenous over $\overline{\Q}$, there exists a cyclic isogeny $\varphi: E \rightarrow E'$ which is unique up to sign \cite[Lemma A.1]{CremonaNajmanQCurve}, so we may generally assume our isogenies are cyclic.

In the case of elliptic curves over $\Q$, we have a complete classification of the possible rational cyclic subgroups that can occur.

\begin{thm}[Mazur \cite{mazur78}, Kenku \cite{Kenku79,Kenku80,Kenku80_2,Kenku81,kenku}, and others; see Section 9 of \cite{LRAnn}] \label{IsogClassification}
If $E/\Q$ is an elliptic curve possessing a $\Q$-rational cyclic subgroup of order $N$, then $N \leq 19$ or $N \in \{21, 25, 27, 37, 43, 67, 163\}$.
\end{thm}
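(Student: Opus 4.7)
The statement is the classical Mazur--Kenku classification of cyclic rational isogenies of elliptic curves over $\Q$, which is not a self-contained lemma of the present paper but a deep result compiled from many sources. My proposal therefore sketches the high-level structure of the known proof rather than a fresh argument.

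The plan is to translate the problem into the study of rational points on the modular curves $X_0(N)$. A $\Q$-rational cyclic subgroup $C \subset E$ of order $N$ is the same data as a $\Q$-rational cyclic isogeny $\varphi: E \to E/C$ of degree $N$, which in turn corresponds to a non-cuspidal $\Q$-point on $X_0(N)$. So the theorem is equivalent to the statement that $X_0(N)(\Q)$ contains a non-cuspidal non-CM point only for $N$ in the listed set, together with an explicit enumeration of the CM contributions. I would break this into three stages according to the arithmetic shape of $N$: primes, prime powers, and composite $N$.

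For $N = p$ prime, the core input is Mazur's theorem from \cite{mazur78}: $X_0(p)(\Q)$ consists only of cusps except for $p \in \{2,3,5,7,11,13,17,19,37,43,67,163\}$. Reproving this from scratch is the main obstacle; it requires the Eisenstein quotient $\widetilde{J}$ of $J_0(p)$, a theorem that $\widetilde{J}(\Q)$ is finite (using Mazur's analysis of the Eisenstein ideal and the Mordell--Weil theorem), and an argument via specialization modulo primes of good reduction to show that any non-cuspidal rational point on $X_0(p)$ would force a congruence between Hecke eigenforms incompatible with the Eisenstein structure. For each $p$ in the exceptional list one then checks directly, using the known models of $X_0(p)$, whether the rational points come from CM curves (as for $p=11, 17, 19, 37, 43, 67, 163$) or give rise to infinite families (as for $p \leq 7$ and $p = 13$).

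For $N = p^k$ a prime power with $k \geq 2$, I would invoke the isogeny-composition structure: if $E$ has a rational cyclic $p^k$-isogeny, then it has a rational cyclic $p$-isogeny, so we restrict to the primes above. For each such $p$ one studies the $p$-adic Galois representation and the natural maps $X_0(p^{k+1}) \to X_0(p^k)$. Kenku's results in \cite{Kenku79,Kenku80,Kenku80_2,Kenku81} handle these by explicit analysis of $X_0(p^k)$ for small $k$ and an inductive argument using that once $p^k$ exceeds the values in the list, the relevant modular curves have genus and rank too large to support new non-cuspidal rational points; this yields the upper bounds $25$, $27$, and the absence of $p^k$-isogenies for other $(p,k)$. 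Finally, for composite $N$ with multiple prime divisors, the strategy is to combine two independent prime-power isogenies on $E$ and bound the resulting isogeny degree by Kenku's theorem on the isogeny graph of an elliptic curve over $\Q$ (which has at most eight vertices and bounded edge structure); this forces $N$ into the finite list $\{2, 3, \ldots, 19, 21, 25, 27, 37, 43, 67, 163\}$. The main obstacle throughout is the prime-by-prime analysis underlying Mazur's theorem; the prime-power and composite steps, while technically intricate, reduce cleanly to it.
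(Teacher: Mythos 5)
The paper does not actually prove this theorem: it is imported wholesale from the literature (Mazur, Kenku, Ogg, and others, as compiled in Section 9 of \cite{LRAnn}), so there is no internal proof to compare against, and your decision to sketch the known proof rather than manufacture a new one is the right call. Your skeleton for the prime case --- translating to non-cuspidal points of $X_0(p)(\Q)$, the Eisenstein quotient of $J_0(p)$, finiteness of its Mordell--Weil group, and a descent/specialization argument --- is the correct shape of Mazur's proof, and reducing prime powers $p^k$ to explicit analysis of the towers $X_0(p^{k+1}) \to X_0(p^k)$ is indeed how Kenku proceeds.

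Two points in your sketch need correcting, one of them a genuine logical flaw. First, your composite step is circular: Kenku's theorem that a $\Q$-isogeny class contains at most eight curves is itself a \emph{consequence} of the completed classification of cyclic $N$-isogenies, composite $N$ included, so it cannot be invoked to establish that classification. The actual treatment of composite $N$ in the cited literature determines $X_0(N)(\Q)$ directly for each candidate composite level --- levels such as $N = 20, 22, 24, 26, 33, 35$ are eliminated via rank-zero genus-one (or higher-genus) quotients, while $N = 21$ survives with non-cuspidal points --- in work of Ogg, Mazur, and Kenku. Second, a factual slip in your prime-by-prime accounting: the non-cuspidal rational points on $X_0(17)$ and $X_0(37)$ correspond to \emph{non-CM} curves (for instance $j = -17\cdot 373^3/2^{17}$ at level $17$, and $j = -7\cdot 11^3$, $j = -7\cdot 137^3\cdot 2083^3$ at level $37$), and $X_0(11)$ carries both CM and non-CM points; only $p = 19, 43, 67, 163$ are accounted for purely by CM. This second slip does not change the list of admissible $N$, but it matters in the present paper's context, where the CM/non-CM distinction for curves admitting these isogenies is exactly the point at issue (note that the paper's central $j$-invariant $-140625/8$ is a non-CM curve with a rational cyclic $21$-isogeny).
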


This classification can also be used to characterize certain rational isogenies for non-CM elliptic curves $E/\Q$ under base extension to number fields of odd degree. If a non-CM elliptic curve $E/k$ has a $k$-rational cyclic isogeny of degree $n$, then so will any $E'/k$ with $j(E')=j(E)$. Thus the following result can be used to deduce isogeny information for any non-CM elliptic curve with $j$-invariant in $\Q$.

\begin{prop}[Cremona, Najman \cite{CremonaNajmanQCurve}] \label{prop:CN}
Let $E/\Q$ be an elliptic curve without CM, and let $p$ be an odd prime such that $E$ has no $p$-isogenies defined over $\Q$. Then all $p$-isogenies of $E$ are defined over number fields of even degree, unless $j(E)=2268945/128$, in which case $E$ acquires $7$-isogenies over the cubic field generated by a root of $x^3-5x-5$.
\end{prop}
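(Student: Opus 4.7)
The plan is to translate the hypothesis into a statement about the orbits of the mod-$p$ image $G := \im \rho_{E,p} \subseteq \GL_2(\F_p)$ on $\PP^1(\F_p)$, and then reduce case-by-case to known classifications of rational points on modular curves. A $p$-isogeny of $E$ defined over a number field $K$ corresponds to a line $L \subseteq E[p]$ whose $G$-orbit in $\PP^1(\F_p)$ has size $[K:\Q]$; the hypothesis that $E/\Q$ admits no $\Q$-rational $p$-isogeny says $G$ has no fixed line, while the existence of a $p$-isogeny over an odd-degree number field says $G$ has an orbit of odd size $\geq 3$. Since orbit sizes sum to the even number $p+1$ and orbits of size $1$ are forbidden, at least two orbits of odd size $\geq 3$ must exist.

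Next I would run through Dickson's classification to enumerate the possibilities for $G$. The Borel case is ruled out by the no-fixed-line hypothesis, and any $G$ containing $\SL_2(\F_p)$ acts transitively on $\PP^1(\F_p)$ with a single orbit of even size $p+1$, hence is also ruled out. The remaining cases are: (i) $G$ contained in the normalizer of a split Cartan but not in the split Cartan itself; (ii) $G$ contained in the normalizer of a non-split Cartan; and (iii) $G$ has exceptional projective image $A_4$, $S_4$, or $A_5$, which restricts $p$ to a small explicit set. In each case the orbit structure on $\PP^1(\F_p)$ is determined by the group structure modulo scalars, and a finite case analysis determines which pairs (image type, prime) can produce an odd orbit $\geq 3$ in the absence of a fixed point.

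For the surviving pairs I would invoke known rational-points results. For $p = 3$ the arithmetic already fails: an orbit of size $3$ plus one of size $1$ exhausts $\PP^1(\F_3)$, forcing a fixed line. For $p = 5$ the only fixed-point-free partition containing an odd orbit $\geq 3$ is $(3,3)$, forcing the projective image to be $S_3 \subseteq \operatorname{PGL}_2(\F_5)$, and direct inspection of the associated modular curve rules this out for non-CM $E/\Q$. For $p \geq 11$ with $p \neq 13$, Bilu-Parent \cite{BP11} and Bilu-Parent-Rebolledo \cite{BPR13} show that $X_{ns}^+(p)(\Q)$ and $X_s^+(p)(\Q)$ contain no non-CM non-cuspidal points; the case $p = 13$ is handled by Balakrishnan et al.~\cite{BalakrishnanEtAl}; and exceptional projective images can occur only for an explicit small set of primes, all handled by the tables of Sutherland-Zywina \cite{SutherlandZywina}. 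The lone surviving case is $p = 7$, where the permitted orbit structures on $\PP^1(\F_7)$ are $(3,5)$ or $(3,3,2)$. A direct enumeration using Sutherland-Zywina's tables yields a single non-CM rational $j$-invariant producing an odd orbit, namely $j = 2268945/128$. Once this $j$ is identified, the explicit cubic field $\Q[x]/(x^3 - 5x - 5)$ can be read off by factoring $\Phi_7(j,Y) \in \Q[Y]$ and identifying its cubic factor.

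The main obstacle I anticipate is the $p = 7$ analysis. The orbit partitions $(3,5)$ and $(3,3,2)$ are compatible with several conjugacy classes of subgroups of $\GL_2(\F_7)$ (living inside the split or non-split Cartan normalizers, or arising from exceptional projective images), and for each class one must know the non-CM rational points on the corresponding modular curve. The crux of the argument is to verify that among all these modular curves, exactly one carries a non-CM non-cuspidal rational point, and that this point has $j$-invariant $2268945/128$. Concentrating the entire exceptional behavior at a single $j$-invariant is what produces the clean exception in the statement.
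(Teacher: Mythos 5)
The paper itself does not prove this proposition: it is quoted from Cremona--Najman \cite{CremonaNajmanQCurve} (who in turn rely on Elkies's determination of the rational points on the relevant level-$7$ modular curve), so your attempt must be measured against that argument. Your skeleton --- orbit sizes on $\PP^1(\F_p)$, the parity count forcing two odd orbits of size $\geq 3$, Dickson's classification, then rational points on modular curves --- is the right shape, and your $p=3$ and $p=13$ cases are fine. But there is a fatal gap in the non-split Cartan branch. You cite Bilu--Parent and Bilu--Parent--Rebolledo as showing that \emph{both} $X_s^+(p)(\Q)$ and $X_{ns}^+(p)(\Q)$ contain no non-CM non-cuspidal points for $p \geq 11$, $p \neq 13$. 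No such theorem exists for the non-split side: those papers treat only the split Cartan, the non-split case being precisely the open case of Serre's uniformity problem (indeed, the whole point of Theorem 1.3 of this paper). Worse, the claim is outright false for $p=11$: $X_{ns}^+(11)$ is an elliptic curve of rank $1$, so it has infinitely many rational points, almost all non-CM. Nor can you discard the non-split branch group-theoretically: whenever $p \equiv 3 \pmod 4$ and $p+1$ has an odd divisor $d \geq 3$, there are subgroups of the non-split normalizer passing every soft test. For instance, for $p=11$, identify $\F_{11}^2$ with $\F_{121}$, let $g$ generate $\F_{121}^*$ and $\sigma$ be Frobenius; then $G = \langle g^8,\, g^{10}\sigma \rangle$ has order $30$, surjective determinant ($\det(g^8)$ generates the squares and the reflections have determinant in $-(\text{squares})$), contains a trace-$0$, determinant-$(-1)$ involution (a candidate complex conjugation), fixes no line, and acts on the $12$ points of $\PP^1(\F_{11})$ with orbits of sizes $(3,3,6)$ --- i.e.\ it would produce $11$-isogenies over cubic fields. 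Ruling such groups out requires arithmetic on modular curves strictly between $X_G$ and $X_{ns}^+(p)$ (for $p=11$, covers of an infinite-rational-point curve), which is exactly the careful analysis your proposal skips; one cannot outsource it to the split-Cartan literature.

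Two secondary corrections. First, the orbit partition $(3,5)$ at $p=7$ is impossible: orbit sizes divide the order of the projective image, and $5 \nmid \#\operatorname{PGL}_2(\F_7) = 336$; only $(3,3,2)$ survives. Second, exceptional projective images can \emph{never} yield an odd orbit of size $\geq 3$ at any prime: an odd orbit forces the line stabilizer to contain a full Sylow $2$-subgroup of the image, and that stabilizer sits inside a point stabilizer of $\operatorname{PGL}_2(\F_p)$ on $\PP^1(\F_p)$, whose prime-to-$p$ subgroups are cyclic, while the Sylow $2$-subgroups of $A_4$, $S_4$, $A_5$ are the non-cyclic $V_4$, $D_4$, $V_4$. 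In particular, your suggestion that the exception $j = 2268945/128$ might arise from an exceptional ($S_4$-type) mod-$7$ image is misplaced: as the paper itself records in Section 4.3, that curve has mod-$7$ image of order $18$ or $36$, an index-$2$ subgroup (projectively $S_3$) of the \emph{split} Cartan normalizer with orbits $(2,3,3)$, and the genuinely hard content at $p=7$ --- determining the rational points of the corresponding index-$56$ curve --- is Elkies's contribution, not something read off from the Sutherland--Zywina tables of groups.
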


Finally, we note that any $\Q$-curve defined over a number field $k$ of odd degree is isogenous over $k$ to an elliptic curve with rational $j$-invariant, as a consequence of the following key result.

\begin{thm}[Cremona, Najman \cite{CremonaNajmanQCurve}] \label{thm:CN}
Let $E$ be a non-CM $\Q$-curve defined over a number field $k$. If $\Q(j(E))$ has no quadratic subfields, then $E$ is isogenous over $k$ to an elliptic curve with rational $j$-invariant.
\end{thm}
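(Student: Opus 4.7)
The plan is to combine Elkies' structure theorem for non-CM $\Q$-curves with the uniqueness of cyclic isogenies between non-CM curves (the lemma cited as \cite[Lemma A.1]{CremonaNajmanQCurve} in the excerpt) and a quadratic-twist descent argument. The strategy splits into three stages.

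Stage one: apply Elkies' theorem to produce an elliptic curve $E^*$ that is $\overline{\Q}$-isogenous to $E$ and whose minimal field of definition $K$ is a finite Galois extension of $\Q$ with $\Gal(K/\Q)$ an elementary abelian $2$-group. This gives a distinguished representative of the $\overline{\Q}$-isogeny class, though $j(E^*)$ may a priori still generate a nontrivial extension of $\Q$.

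Stage two: use the hypothesis that $\Q(j(E))$ has no quadratic subfields to upgrade stage one to the statement that the $\overline{\Q}$-isogeny class of $E$ contains an elliptic curve $E_0^{\sharp}$ with $j(E_0^{\sharp}) \in \Q$. The cyclic isogenies $\mu_\sigma \colon {}^\sigma E \to E$ (unique up to sign, for $\sigma \in \Gal_\Q$, by the uniqueness lemma) endow the relevant finite orbit inside the $\overline{\Q}$-isogeny class with a $\Gal_\Q$-equivariant structure. The idea is to show that any nontrivial quadratic subfield of $K$ would transport along this equivariant isogeny structure to a quadratic subfield of $\Q(j(E))$, contradicting the hypothesis; thus $K=\Q$, which yields the desired $E_0^{\sharp}$.

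Stage three: descend to a $k$-rational isogeny by quadratic twisting. Again by the uniqueness lemma, there is a cyclic isogeny $\varphi \colon E \to E_0^{\sharp}$ defined over $\overline{k}$ and unique up to sign. For each $\sigma \in \Gal_{\overline{k}/k}$, uniqueness forces $\sigma(\varphi) = \pm \varphi$, so $\sigma \mapsto \sigma(\varphi)/\varphi$ is a quadratic character $\chi$ of $\Gal_{\overline{k}/k}$. Letting $E_0$ denote the quadratic twist of $E_0^{\sharp}$ by $\chi$ produces an elliptic curve over $k$ with $j(E_0) = j(E_0^{\sharp}) \in \Q$, and the isogeny $E \to E_0$ obtained by composing $\varphi$ with the underlying $\overline{k}$-isomorphism of the twist becomes $\Gal_{\overline{k}/k}$-invariant, hence defined over $k$, as required.

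The main obstacle I anticipate is stage two: translating the algebraic hypothesis ``$\Q(j(E))$ has no quadratic subfields'' into a statement about fields of definition of curves in the $\overline{\Q}$-isogeny class. Stages one and three are fairly standard manipulations with Elkies' machinery and quadratic twists, but stage two requires a careful Galois-theoretic argument tracking how the $2$-elementary abelian extension $K$ produced by Elkies interacts with the orbit $\Gal_\Q \cdot j(E)$ via the degree-$1$-up-to-sign isogeny structure, and then using the hypothesis to force $K=\Q$.
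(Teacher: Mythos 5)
This theorem is not proved in the paper at all: it is imported verbatim from \cite[Theorem 2.7]{CremonaNajmanQCurve}, so your proposal can only be measured against that source. Against it, your stages one and three are sound and standard. Elkies' theorem does supply a curve in the $\overline{\Q}$-isogeny class defined over a polyquadratic field, and your stage-three descent is exactly right: sign-uniqueness of the cyclic isogeny $\varphi$ gives a quadratic character $\sigma \mapsto {}^{\sigma}\varphi/\varphi$ of $\Gal_k$, and twisting the target by it makes the composite isogeny Galois-invariant, hence defined over $k$; this is valid here because the non-CM hypothesis excludes $j=0,1728$ and hence extra automorphisms.

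The genuine gap is stage two, and the mechanism you propose there fails as stated. Fields of definition of $\overline{\Q}$-isogenous curves do not embed into one another, so a quadratic subfield of $K$ cannot be ``transported along the equivariant isogeny structure'' into $\Q(j(E))$: already a curve over $\Q$ with rational $j$-invariant is typically $\overline{\Q}$-isogenous to curves whose $j$-invariants generate large irrational extensions, so the field of definition of an arbitrary member of the class says nothing about $\Q(j(E))$. What is needed---and what the family $\mu_\sigma$ alone cannot deliver---is a \emph{canonical}, hence $\Gal_\Q$-equivariant, choice of vertex in the isogeny class. Concretely: since $E$ is non-CM, each $\ell$-isogeny graph on the class is a tree (a cycle would yield an endomorphism with nontrivial cyclic kernel, i.e.\ CM, exactly as this paper observes in the proof of Proposition \ref{Prop:8.1}); $\Gal_\Q$ acts on each tree with finite orbits and so stabilizes a center, which is a vertex or an edge; choosing in each tree the central vertex, or the endpoint of the central edge, nearest to $j(E)$ (well-defined, since in a tree the two endpoints of an edge are never equidistant from a given vertex) singles out a canonical $j$-invariant $j_0$ in the class. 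Canonicity gives equivariance, so $\Gal_{\Q(j(E))}$ fixes $j_0$, i.e.\ $\Q(j_0)\subseteq \Q(j(E))$, while Elkies' analysis shows $\Q(j_0)$ is an elementary abelian $2$-extension of $\Q$; the no-quadratic-subfield hypothesis then forces $\Q(j_0)=\Q$, and your stage three finishes from there. Without this canonical-selection step there is no specific vertex whose stabilizer is forced to contain $\Gal_{\Q(j(E))}$, so no contradiction can be extracted; note also that your intended conclusion ``$K=\Q$'' is a priori stronger than what is needed---the existence of \emph{some} rational $j$-invariant in the class suffices.
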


\subsection{Modular Curves.}Points on the affine curve $Y_1(N)$ correspond to equivalence classes of elliptic curves $E/\C$ with a distinguished point $P$ of order $N$. We consider two pairs $[E,P]$ and $[E',P']$ to be equivalent if there exists an isomorphism $\varphi: E \rightarrow E'$ such that $\varphi(P)=P'$. By adding a finite number of points to $Y_1(N)$---called cusps---we obtain the smooth projective curve $X_1(N)$. In fact, $X_1(N)$ can be given the structure of an algebraic curve over $\Q$, and we view it as such unless otherwise noted. See \cite[Section 7.7]{modular} or \cite{DR} for details.

If $x=[E,P] \in X_1(N)$ is a non-cuspidal point, we say a Weierstrass equation over a number field $k$ is a model for $E/k$ if it has $j$-invariant equal to $j(E)$, and we use $P$ to denote the image of the point associated to $x$ under the implied isomorphism. For any model of $E/k$ having $P \in E(k)$, the residue field $\Q(x)$ is contained in $k$. Conversely, there exists a model of $E/\Q(x)$ such that $P \in E(\Q(x))$. See \cite[p. 274, Proposition VI.3.2]{DR}. It is often useful to have a more concrete way of describing the residue field, as given in the following lemma.

\begin{lem}\label{lem:ResidueField}
Let $E$ be an elliptic curve and let $P \in E$ be a point of order $N$. Then the residue field of the closed point $x=[E,P] \in X_1(N)$ is given by
\[
\Q(x)=\Q(j(E),\mathfrak{h}(P)),
\]
where $\mathfrak{h}: E \rightarrow E/\Aut(E) \cong \mathbb{P}^1$ is a Weber function for $E$.
\end{lem}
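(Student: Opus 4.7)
The plan is to prove both inclusions, using the defining property of the Weber function that $\mathfrak{h}_{\varphi E}(\varphi Q) = \mathfrak{h}_E(Q)$ for any isomorphism $\varphi \colon E \to \varphi E$ and any $Q \in E$, together with the fact that $\mathfrak{h}_E$ descends to an isomorphism $E/\Aut(E) \xrightarrow{\sim} \PP^1$ (so that $\mathfrak{h}_E$ separates $\Aut(E)$-orbits).

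For the inclusion $\Q(x) \supseteq \Q(j(E), \mathfrak{h}(P))$, I would argue that both $j$ and $\mathfrak{h}$ define rational functions on $X_1(N)$. Indeed, $j$ is visibly a moduli invariant, and the identity $\mathfrak{h}_{\varphi E}(\varphi Q) = \mathfrak{h}_E(Q)$ shows that the value $\mathfrak{h}(P)$ depends only on the equivalence class $[E,P]$; together these give well-defined morphisms $X_1(N) \to \PP^1$, hence their values at the closed point $x$ lie in the residue field $\Q(x)$.

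For the reverse inclusion, I would take $\sigma \in \Gal_{\Q(j(E),\mathfrak{h}(P))}$ and argue $\sigma$ fixes $x$. Since $\sigma$ fixes $j(E)$, the curves $E$ and $\sigma E$ are $\overline{\Q}$-isomorphic, so pick any $\psi \colon \sigma E \to E$. Applying the defining property of $\mathfrak{h}$ first to the isomorphism $\sigma \colon E \to \sigma E$ and then to $\psi$ yields
\[
\mathfrak{h}_E(\psi(\sigma P)) \;=\; \mathfrak{h}_{\sigma E}(\sigma P) \;=\; \sigma\bigl(\mathfrak{h}_E(P)\bigr) \;=\; \mathfrak{h}_E(P),
\]
the last equality by the hypothesis on $\sigma$. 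Because $\mathfrak{h}_E$ separates $\Aut(E)$-orbits, there exists $\alpha \in \Aut(E)$ with $\alpha(\psi(\sigma P)) = P$. Then $\alpha \circ \psi \colon \sigma E \to E$ is an isomorphism sending $\sigma P$ to $P$, so $[\sigma E, \sigma P] = [E, P]$ in $X_1(N)$ and $\sigma x = x$. Hence the stabilizer of $x$ in $\Gal_\Q$ contains $\Gal_{\Q(j(E),\mathfrak{h}(P))}$, giving the inclusion of residue fields.

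The only real subtlety is the Galois-equivariance step $\mathfrak{h}_{\sigma E}(\sigma P) = \sigma(\mathfrak{h}_E(P))$, which requires that the Weber function be chosen in a way that commutes with the Galois action; this is standard provided one uses the canonical normalization of $\mathfrak{h}$ from the classical theory (different normalizations are needed at $j = 0$ and $j = 1728$ to account for the larger automorphism groups, but in all cases $\mathfrak{h}_E$ is defined over $\Q(j(E))$ and satisfies the required equivariance). I would handle this by citing a standard reference rather than reproving it, since the heart of the lemma is the orbit-separation argument outlined above.
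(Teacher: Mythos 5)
Your proof is correct, but your second inclusion is argued by a genuinely different mechanism than the paper's. Both arguments turn on the same key computation: if $\sigma$ fixes $j(E)$ and $\mathfrak{h}(P)$, then orbit-separation of the Weber function forces $\sigma P$ into the $\Aut(E)$-orbit of the image of $P$. You conclude from this by computing a stabilizer: $[\sigma E, \sigma P]=[E,P]$ for all $\sigma \in \Gal_{\Q(j(E),\mathfrak{h}(P))}$, so this group stabilizes the geometric point underlying $x$ and hence $\Q(x) \subseteq \Q(j(E),\mathfrak{h}(P))$ --- a direct bound on the field of moduli that never descends the pair $(E,P)$. The paper instead fixes a model over $\Q(j(E))$, writes $\sigma(P)=\zeta P$ with $\zeta \in \Aut(E)$, notes this defines a character $\chi$, and twists by $\chi^{-1}$ to produce a model of $E$ over $\Q(j(E),\mathfrak{h}(P))$ on which $P$ is rational; the containment then follows from the fact (Deligne--Rapoport, Prop.~VI.3.2, quoted just before the lemma) that $\Q(x)$ is contained in any field over which such a model exists. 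The twist buys slightly more --- an explicit model over the residue field with $P$ rational, i.e.\ field of moduli equals field of definition --- while your stabilizer route is more economical but quietly relies on two standard facts you should cite: that the Galois action on non-cuspidal $\overline{\Q}$-points of the canonical $\Q$-model of $X_1(N)$ is $\sigma[E,P]=[\sigma E,\sigma P]$, and, for your first inclusion, that $(j,\mathfrak{h})$ is genuinely a $\Q$-rational map on $X_1(N)$ rather than merely well defined on points (the paper's first inclusion sidesteps this by combining the $\Q(x)$-model having $P$ rational with the model-independence of $\Q(j(E),\mathfrak{h}(P))$ from Shimura, p.~107). One phrasing slip worth fixing: $\sigma\colon E \to \sigma E$ is not an isomorphism of curves over $\overline{\Q}$, so $\mathfrak{h}_{\sigma E}(\sigma P)=\sigma(\mathfrak{h}_E(P))$ is not an instance of the invariance property $\mathfrak{h}_{E'}\circ\psi=\mathfrak{h}_E$; it is exactly the Galois-equivariance you flag at the end, valid because the canonical $\mathfrak{h}$ is given by formulas in the Weierstrass coefficients with coefficients in $\Q$, so that step should be stated as such rather than folded into the isomorphism-invariance.
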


\begin{proof}
Since $X_1(N)$ parametrizes elliptic curves up to isomorphism, we may choose a model of $E$ defined over $F \coloneqq \Q(j(E))$. It is clear that $F(\mathfrak{h}(P))) \subseteq F(P)$, and this field does not depend on the choice of model for $E/F$: for any other $E'/F$ with $j(E')=j(E)$ and isomorphism $\psi:E \rightarrow E'$, we have $F(\mathfrak{h}(P))=F(\mathfrak{h}(\psi(P)))$ by \cite[p. 107]{shimura}. Thus $F(\mathfrak{h}(P)) \subseteq \Q(x)$.

To show equality holds, it suffices to construct a model of $E$ over $F(\mathfrak{h}(P)))$ such that $P$ becomes rational. Indeed, for any $\sigma \in \Gal_{F(\mathfrak{h}(P)) }$,
\[
\sigma(P)=\zeta P
\]
for some $\zeta \in \Aut(E)=\mu_2, \mu_4$ or $\mu_6$. This defines a character $\chi$, and $P$ is rational on the twist $E^{\chi^{-1}}$ over $F(\mathfrak{h}(P))$.
\end{proof}

\begin{remark}
If $E$ is given by an equation of the form $y^2=4x^3-c_2x-c_3$, then for $P=(x,y) \in E$ and $\Delta=c_2^3-27c_3^2$ we may take $\mathfrak{h}$ to be
\[ \mathfrak{h}(P) = \begin{cases} \frac{c_2c_3}{\Delta}x & j(E) \neq 0,1728, \\ \frac{c_2^2}{\Delta} x^2 & j(E) = 1728,\\ \frac{c_3}{\Delta} x^3 & j(E) = 0. \end{cases} \]
If $\psi: E \rightarrow E'$ is an isomorphism, then one can check that $\mathfrak{h}_E=\mathfrak{h}_{E'} \circ \psi$; see \cite[p. 107]{shimura}. Thus if $E$ does not have CM, the residue field of $[E,P] \in X_1(N)$ is generated over $\Q(j(E))$ by the $x$-coordinate of $P$.
\end{remark}

Lemma \ref{lem:ResidueField} also gives a concrete way of computing the degree of a point on $X_1(N)$ from a particular model of $E/\Q(j(E))$. The following lemma allows us to relate the degree of this point to that of its image on other modular curves.

            \begin{prop}\label{prop:Degree}
                For positive integers $a$ and $b$, there is a natural $\Q$-rational map $f:X_1(ab) \rightarrow X_1(a)$ defined by sending $[E,P]$ to $[E,bP]$. Moreover
                \[
                    \deg(f)=
                    c_{f}\cdot b^2 \prod_{p \mid b,\, p \nmid a}
                    \left(1-\frac{1}{p^2}\right),
                \]
                where $c_{f}=1/2$ if $a \leq 2$ and $ab>2$ and $c_{f}=1$ otherwise.
            \end{prop}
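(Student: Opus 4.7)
The plan is to establish the degree formula via the analytic description of $X_1(N)$ over $\mathbb{C}$, where $X_1(N)(\mathbb{C}) = \overline{\Gamma}_1(N) \backslash \mathcal{H}^*$ and $\overline{\Gamma}_1(N) \subseteq \mathrm{PSL}_2(\mathbb{Z})$ denotes the image of the standard congruence subgroup. The map $f$ is $\mathbb{Q}$-rational because it is induced by the morphism of moduli problems $[E,P] \mapsto [E, bP]$, which is well-defined since $bP$ has order exactly $a$ when $P$ has order $ab$. Since $f$ is defined over $\mathbb{Q}$ and $X_1(ab)$, $X_1(a)$ are geometrically integral, the degree of $f$ agrees with its degree as a map of Riemann surfaces, which in turn equals the index $[\overline{\Gamma}_1(a) : \overline{\Gamma}_1(ab)]$ coming from the inclusion of the corresponding congruence subgroups.

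First I would record the standard index formula $[\mathrm{SL}_2(\mathbb{Z}) : \Gamma_1(N)] = N^2 \prod_{p \mid N}(1 - 1/p^2)$, valid for all $N \geq 1$. This follows from $|\mathrm{SL}_2(\mathbb{Z}/N\mathbb{Z})| = N^3 \prod_{p \mid N}(1 - 1/p^2)$ together with the observation that $\Gamma_1(N)/\Gamma(N)$ is cyclic of order $N$, generated by the class of $\bigl(\begin{smallmatrix}1 & 1 \\ 0 & 1\end{smallmatrix}\bigr)$. Taking the ratio for $ab$ and $a$ gives
\[
\frac{[\mathrm{SL}_2(\mathbb{Z}) : \Gamma_1(ab)]}{[\mathrm{SL}_2(\mathbb{Z}) : \Gamma_1(a)]} \;=\; b^2 \prod_{p \mid b,\, p \nmid a}\left(1 - \frac{1}{p^2}\right),
\]
which is exactly the non-$c_f$ part of the claimed formula.

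Next I would pass from $\mathrm{SL}_2$ to $\mathrm{PSL}_2$. The key point is that $-I \in \Gamma_1(N)$ precisely when $N \leq 2$, so $[\mathrm{PSL}_2(\mathbb{Z}) : \overline{\Gamma}_1(N)]$ equals $[\mathrm{SL}_2(\mathbb{Z}) : \Gamma_1(N)]$ for $N \leq 2$, and one-half of it for $N \geq 3$. Hence the ratio $[\overline{\Gamma}_1(a) : \overline{\Gamma}_1(ab)]$ differs from the $\mathrm{SL}_2$-index ratio by the correction factor
\[
c_f \;=\; \frac{\varepsilon(a)}{\varepsilon(ab)}, \qquad \varepsilon(N) = \begin{cases} 1 & N \leq 2, \\ 2 & N \geq 3, \end{cases}
\]
and a quick case check shows that this ratio is $\tfrac{1}{2}$ exactly when $a \leq 2$ and $ab > 2$, and $1$ otherwise, matching the statement.

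The main obstacle is the bookkeeping in this last step: the formula is genuinely case-dependent only because of the action of $-I$ on the upper half-plane, and one has to verify that the small cases $(a,b) \in \{(1,1),(1,2),(2,1)\}$ behave as predicted. An equivalent route, which I would sketch as a sanity check, is to count fibers moduli-theoretically over a generic $[E,P] \in X_1(a)$ with $\mathrm{Aut}(E) = \{\pm 1\}$: the set of $P' \in E[ab]$ of exact order $ab$ with $bP' = P$ has size $b^2 \prod_{p \mid b,\, p \nmid a}(1 - 1/p^2)$ by a prime-by-prime analysis, and the factor $c_f$ arises from whether $P' \sim -P'$ are or are not already identified upstairs and downstairs under the $X_1(N)$ equivalence relation.
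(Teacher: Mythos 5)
Your proof is correct and takes essentially the same approach as the paper, whose entire proof is to note that $\Q$-rationality follows from the moduli interpretation and to cite Diamond--Shurman for the degree computation, i.e.\ exactly the index $[\overline{\Gamma}_1(a):\overline{\Gamma}_1(ab)]$ in $\PSL_2(\Z)$ that you compute. Your write-up simply makes explicit the index formula for $\Gamma_1(N)$ and the $-I$ bookkeeping (with the correct criterion $-I\in\Gamma_1(N)\iff N\leq 2$, yielding $c_f=\varepsilon(a)/\varepsilon(ab)$) that the citation encapsulates.
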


\begin{proof}
By the moduli interpretation, the map is $\Q$-rational,  and the degree calculation follows from \cite[p.66]{modular}.
\end{proof}

In a similar way, the modular curve $X_0(N)$ is a smooth projective curve over $\Q$ whose non-cuspidal points parametrize pairs $[E,C]$ where $E$ is an elliptic curve and $C \subset E$ is a cyclic subgroup of order $N$. As noted above, the group $C$ defines a unique isogeny $\varphi:E \rightarrow E'$ with $\ker(\varphi)=C$, so we may alternatively notate $[E,C]$ with the triple $[E, E', \varphi]$. The following lemma gives a useful description of the residue field of a non-cuspidal point on $X_0(N)$.

\begin{lem}\label{lem:IsogenyResidueField}
Let $\varphi:E \rightarrow E'$ be a cyclic isogeny of non-CM elliptic curves of degree $n$. Then
\[
\Q(\varphi)=\Q(j(E),j(E'))
\]
is the residue field of the induced point $[E, \ker(\varphi)] \in X_0(n)$.
\end{lem}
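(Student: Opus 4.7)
The plan is to identify the residue field $\Q(x)$ of the closed point $x = [E, \ker(\varphi)] \in X_0(n)$ with the field of moduli of the pair $(E, \ker(\varphi))$, namely the fixed field of
\[
H \coloneqq \{ \sigma \in \Gal_\Q : (E^\sigma, \ker(\varphi^\sigma)) \cong_{\overline{\Q}} (E, \ker(\varphi)) \}.
\]
This is the standard moduli-theoretic characterization of residue fields on $X_0(N)$ viewed as a coarse moduli scheme over $\Q$, analogous to Lemma \ref{lem:ResidueField} for $X_1(N)$. I would then prove the two inclusions between $\Q(j(E), j(E'))$ and the fixed field of $H$ separately.

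For the inclusion $\Q(j(E), j(E')) \subseteq \Q(x)$, suppose $\sigma \in H$. From the defining isomorphism of pairs we get $E^\sigma \cong E$, so $\sigma$ fixes $j(E)$. Moreover, the codomain of $\varphi^\sigma$ is $(E')^\sigma$, and it must be isomorphic to the codomain of $\varphi$, namely $E'$; hence $\sigma$ fixes $j(E')$ as well. This gives one containment.

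For the reverse inclusion $\Q(x) \subseteq \Q(j(E), j(E'))$, suppose $\sigma \in \Gal_\Q$ fixes both $j$-invariants. I would choose $\overline{\Q}$-isomorphisms $\alpha : E^\sigma \xrightarrow{\sim} E$ and $\beta : (E')^\sigma \xrightarrow{\sim} E'$. Then $\beta \circ \varphi^\sigma \circ \alpha^{-1}$ is a cyclic isogeny $E \to E'$ of degree $n$. Since $E$ and $E'$ are non-CM, Lemma A.1 of \cite{CremonaNajmanQCurve} (invoked at the start of Section 2.3) asserts that the cyclic $n$-isogeny $E \to E'$ is unique up to sign; therefore $\beta \circ \varphi^\sigma \circ \alpha^{-1} = \pm \varphi$. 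Taking kernels gives $\alpha(\ker \varphi^\sigma) = \ker(\pm \varphi) = \ker \varphi$, so $\alpha$ is an isomorphism of pairs $(E^\sigma, \ker \varphi^\sigma) \cong (E, \ker \varphi)$. Hence $\sigma \in H$.

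The only delicate point is the last step: one has to observe that the \emph{kernel} of the isogeny is insensitive to the sign ambiguity, because $-1 \in \Aut(E)$ preserves every subgroup, so the ``$\pm$'' from the Cremona--Najman uniqueness does not obstruct the conclusion. Beyond this bookkeeping, the non-CM hypothesis enters exactly through their lemma and ensures that $\Aut(E) = \{\pm 1\}$ so that field of moduli coincides with field of definition in the relevant sense; hence I expect no further obstacles.
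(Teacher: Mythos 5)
Your proof is correct, but it takes a genuinely different route from the paper: the paper's entire proof of this lemma is a citation to \cite[Corollary A.5]{CremonaNajmanQCurve} (with a pointer to \cite[Proposition 3.3]{ClarkVolcanoes}), whereas you essentially reprove that corollary from scratch. Both of your inclusions are sound: identifying $\Q(x)$ with the fixed field of $H$ is the standard coarse-moduli description of residue fields on $X_0(n)$, and your reverse inclusion correctly exploits that for non-CM curves $\operatorname{Hom}(E,E')$ has rank one, so the cyclic $n$-isogeny $E \to E'$ is unique up to sign and the sign ambiguity disappears upon passing to kernels. The one part of the statement your write-up only gestures at is the equality $\Q(\varphi)=\Q(j(E),j(E'))$ itself, i.e., that the field you computed is genuinely a field of \emph{definition} of the isogeny and not merely the field of moduli; your closing remark about $\Aut(E)=\{\pm 1\}$ points in the right direction, but the clean way to finish is to run your own uniqueness argument once more with a fixed model. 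Namely, choose any model of $E$ over $M \coloneqq \Q(j(E),j(E'))$; for $\sigma \in \Gal_M$, the subgroup $\sigma(\ker \varphi)$ is the kernel of a cyclic $n$-isogeny from $E$ to a curve with $j$-invariant $j(E') \in M$, hence by uniqueness up to sign $\sigma(\ker\varphi)=\ker\varphi$, so $\ker\varphi$ is $M$-rational and $\varphi$ descends to $M$ by V\'elu's formulas --- no twisting is needed for $X_0$-structures, precisely because $-1$ preserves every subgroup (this is where the situation is simpler than for $X_1$-structures, where the paper's Lemma 2.5 does need a twist). What your approach buys is a self-contained proof inside the paper; what the citation buys is brevity, with the bookkeeping (including the precise meaning of $\Q(\varphi)$) already handled in the references.
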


\begin{proof} This follows directly from \cite[Corollary A.5]{CremonaNajmanQCurve}. See also \cite[Proposition 3.3]{ClarkVolcanoes}.
\end{proof}

\section{Proof of Theorem \ref{Thm:SerreConnection}}
In this section, we connect the existence of sporadic points $X_1(p^2)$ to Serre's Uniformity Conjecture, as discussed in the introduction. Let $X(p)$ be the compactification of the modular curve which parametrizes isomorphism classes of pairs $(E,(P,Q))$, where $E$ is an elliptic curve and $(P,Q)$ is and $\F_p$-basis of $E[p]$. This is a smooth projective curve defined over $\Q$ whose base change to $\Q(\zeta_p)$ has $p-1$ connected components.
\begin{thm}
Suppose {either that there exists} finitely many primes $p$ such that $X(p)$ has a sporadic point corresponding to a non-CM elliptic curve with $j$-invariant in $\Q$ or that all non-CM $\Q$-curves corresponding to sporadic points on $X_1(p^2)$ lie in finitely many isogeny classes, as $p$ varies through all primes. Then there exists a bound $C$ such that for every non-CM elliptic curve $E/\Q$, the mod $p$ representation $ \rho_{E,p}$ attached to $E$ is surjective for all $p>C$.
\end{thm}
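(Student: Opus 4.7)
My plan is to argue by contrapositive: assuming Serre's Uniformity Conjecture fails, I will show that both alternatives in the hypothesis must also fail. Since the hypothesis is a disjunction, this reduces to two independent implications, ``first alternative $\Rightarrow$ Serre'' and ``second alternative $\Rightarrow$ Serre''. The failure of Serre's Uniformity furnishes a sequence $(E_n, p_n)$ with $p_n \to \infty$, each $E_n/\Q$ non-CM and $\rho_{E_n, p_n}$ not surjective. Serre's Open Image Theorem forces each fixed non-CM $E/\Q$ to have surjective mod-$p$ representation for all $p$ beyond some constant $C_E$; combined with the finiteness of $\Q$-isogeny classes (Faltings) and the fact that $\overline{\Q}$-isogenous non-CM elliptic curves over $\Q$ have mod-$p$ representations isomorphic up to quadratic twist for $p$ coprime to the intervening isogeny degrees, I refine the sequence so that the $E_n$ lie in pairwise distinct $\overline{\Q}$-isogeny classes.

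For the first alternative I fix, for each $n$, a basis of $E_n[p_n]$ to obtain a closed point $y_n \in X(p_n)$ with residue field of degree $|H_n|/|H_n \cap \langle -I \rangle|$ over $\Q$, where $H_n = \im \rho_{E_n, p_n}$. By Mazur's rational isogeny classification (Theorem \ref{IsogClassification}) Borel images are impossible for $p_n > 163$, and classical bounds on exceptional projective images of $\GL_2(\F_p)$ eliminate the exceptional case for $p_n$ large; hence $H_n$ is contained in the normalizer of a (split or non-split) Cartan subgroup, of order $O(p_n^2)$, so $\deg(y_n) = O(p_n^2)$. Abramovich's gonality bound for congruence modular curves yields $\gon_\Q X(p_n) \geq \gon_\C X(p_n) \gg p_n^3$, so $\deg(y_n) < \gon_\Q X(p_n)$ for $n$ large. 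Upgrading this to genuine sporadicity uses Frey's criterion together with the decomposition of $J(X(p))$ into Hecke-irreducible factors to preclude low-degree maps from $X(p)$ to positive-rank elliptic curves over $\Q$ for $p$ large. Since each $j(y_n) = j(E_n) \in \Q$ with $E_n$ non-CM, this yields infinitely many primes $p_n$ supporting a sporadic, non-CM, rational-$j$ point on $X(p_n)$, contradicting the first alternative.

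For the second alternative I construct, for each $n$, a $\Q$-curve in the $\overline{\Q}$-isogeny class of $E_n$ giving rise to a sporadic point on $X_1(p_n^2)$. When $H_n$ lies in a Borel (possible only for $p_n \leq 163$) or in the normalizer of a split Cartan, $E_n$ admits an $\F_{p_n}$-rational line defined over an extension of degree at most $2$ over $\Q$; a primitive lift in $E_n[p_n^2]$ has $\Gal_\Q$-orbit of size $O(p_n^2)$, and by Lemma \ref{lem:ResidueField} the resulting point on $X_1(p_n^2)$ has degree $O(p_n^2)$, well below the gonality lower bound $\gon_\Q X_1(p_n^2) \gtrsim p_n^4$ from Abramovich and \cite{derickxVH}. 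In the non-split Cartan case no such low-degree-rational $\F_{p_n}$-line exists, and I instead pass to the $\Q$-curve $E_n' = E_n/C_n$, where $C_n$ is one of the two conjugate $\overline{\F}_{p_n}$-lines stabilized by the Cartan, exploiting the resulting isogeny structure together with the quadratic twist data relating $E_n/C_n$ and $E_n/C_n^\sigma$ to produce a low-degree $p_n^2$-torsion point on $E_n'$. Since the $E_n$ lie in distinct $\overline{\Q}$-isogeny classes, so do the resulting $\Q$-curves, giving infinitely many isogeny classes supporting sporadic points on $\bigcup_p X_1(p^2)$ and contradicting the second alternative.

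The main obstacle is the non-split Cartan case of the second alternative: the absence of a low-degree-rational $p_n$-isogeny forces a construction via a $\Q$-curve over an auxiliary extension, and extracting a sufficiently small-degree $p_n^2$-torsion point requires a careful analysis of the mod-$p_n^\infty$ Galois representation of $E_n$ and of the twist data associated to the conjugate isogeny quotients. A secondary subtlety, present in both alternatives, is the verification that degree-below-gonality points are genuinely sporadic; this amounts to ruling out low-degree maps from $X(p)$ and $X_1(p^2)$ to positive-rank elliptic curves over $\Q$, which for $p$ large reduces to rank bounds on the newform elliptic quotients of their Jacobians.
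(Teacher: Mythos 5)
Your overall architecture (contrapositive, Abramovich's gonality bound, distinctness of geometric isogeny classes via Shafarevich plus Serre's Open Image Theorem) matches the paper's, and your treatment of the first alternative is essentially the paper's argument. But the step you yourself flag as the main obstacle---the non-split Cartan case of the second alternative---is not solved, and your proposed construction there is ill-defined. When $\im \rho_{E_n,p_n}$ lies in $C_{ns}^+(p_n)$, the Cartan stabilizes \emph{no} $\F_{p_n}$-rational line in $E_n[p_n]$: the ``two conjugate lines'' it fixes exist only in $E_n[p_n]\otimes_{\F_{p_n}}\F_{p_n^2}$ and hence do not correspond to finite subgroups of $E_n$, so the quotient $E_n'=E_n/C_n$ does not exist as an elliptic curve (indeed $C_{ns}$ modulo scalars acts simply transitively on the $p+1$ actual lines, so the cheapest $p$-isogeny of $E_n$ only appears over a field of degree roughly $p+1$). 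The paper's resolution is simpler and exploits precisely the smallness of the non-split Cartan normalizer: since $\#C_{ns}^+(p)=2(p^2-1)$, the full torsion field $L=\Q(E[p])$ has degree at most $2(p^2-1)$, and over $L$ the curve trivially has two independent rational $p$-isogenies; one passes to an $L$-isogenous curve $E'$ with a cyclic $p^2$-isogeny whose kernel contains an $L$-rational point of order $p$, and the fixed field of the resulting character is a field $K'$ of degree at most $2p(p^2-1)=O(p^3)$ over which the $\Q$-curve $E'$ has a point of order $p^2$, comfortably below $\tfrac12\gon_{\Q}(X_1(p^2))\gg p^4$. Your appeal to ``quadratic twist data relating $E_n/C_n$ and $E_n/C_n^\sigma$'' does not substitute for this, since the objects involved do not exist.

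A second genuine flaw concerns sporadicity. Frey's criterion (\cite[Prop.~2]{frey}, as used in the paper) is the \emph{unconditional} statement that any point of degree less than half the $\C$-gonality is sporadic; it rests on Faltings and requires no supplement. Your proposal instead makes sporadicity contingent on ``rank bounds on the newform elliptic quotients'' of $J(X(p))$ and $J(X_1(p^2))$ for all large $p$, which is both unnecessary and hopelessly out of reach, so as written your proof depends on an unprovable input. Two smaller points: in the split Cartan/Borel cases your claimed $O(p_n^2)$ degree for a lifted $p_n^2$-torsion point is unjustified, since the mod-$p^2$ image is constrained only modulo $p$ (the isogeny-character argument gives $O(p^3)$, which still suffices); and in any case those cases are empty for $p>37$ by the results the paper invokes (Zywina's Theorem 1.11, encapsulating Mazur and Bilu--Parent--Rebolledo), which places the image directly in $C_{ns}^+(p)$---exactly the case your argument leaves open.
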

\begin{proof}
We will prove this theorem by showing that for any $p$ large enough, a non-CM elliptic curve $E/\Q$ with non-surjective $\rho_{E,p}$ will induce both a sporadic point on $X(p)$, corresponding to $E$, and a sporadic point corresponding to a non-CM $\Q$-curve, isogenous to $E$, on $X_1(p^2)$.Since only finitely many rational $j$-invariants can lie in a single $\overline{\Q}$-isogeny class,\footnote{This follows from Lemma \ref{lem:IsogenyResidueField} and the fact that there are only finitely many elliptic curves over $\Q$ in a $\Q$-rational isogeny class. The latter is originally due to Shafarevich; see, for example, Corollary IX.6.2 in \cite{silverman}.} these non-CM $\Q$-curves would necessarily lie in infinitely many geometric isogeny classes. Recall the result of Abramovich \cite[Theorem 0.1]{abramovich},  which says that the $\mathbb C$-gonality $d_\C(X_\Gamma)$ of a modular curve $X_\Gamma$ corresponding to a congruence subgroup $\Gamma$ satisfies
$$\frac{7}{800}D_\Gamma \leq d_\C(X_\Gamma),$$
where $D_\Gamma$ is the index of $\Gamma$ in $\PSL_2(\Z)$. We have that $D_{\Gamma(p)}=\frac{p(p^2-1)}{2}$ and $D_{\Gamma_1(p^2)}=\frac{p^2(p^2-1)}{2}$. Recall that any point of degree $<d_\C({X_\Gamma})/2$ is sporadic on $X_\Gamma$ {(see \cite[Proposition 2]{frey})}.

Suppose now that $E/\Q$ is an elliptic curve without CM, $p>37$ a prime and that $\rho_{E,p}$ is not surjective. By known results about images of Galois (see e.g. \cite[Theorem 1.11]{ZywinaImages})  we know that $\im \rho_{E, p}$ is contained in $C_{ns}^+(p)$, the normalizer of the non-split Cartan subgroup. Since $\#C_{ns}^+(p)= 2(p^2-1)$, it follows that $[\Q(E[p]):\Q]|2(p^2-1)$, and hence $E$ (together with a basis for $E[p]$) induces a $\Q(E[p])$-rational point on $X(p)$. By Abramovich's bound (see also Remark 7.4. in \cite{LeastCMDeg}), it follows that this point is necessarily sporadic for sufficiently large $p$.

Furthermore, as $E$ has two independent $\Q(E[p])$-rational $p$-isogenies, it is $\Q(E[p])$-isogenous to an elliptic curve $E'$ which has a $\Q(E[p])$-rational $p^2$-isogeny $f$ and a point of order $p$ lying in the kernel of $f$ defined over $\Q(E[p])$. In particular, we have
$$\im \rho_{E'/\Q(E[p]), p^2}\subseteq \left\{\begin{pmatrix}
                                1+pt & * \\
                                0 & 1+pk
                              \end{pmatrix}\right\}.$$
Taking the fixed field $K'$ of the  subgroup
                      $$\left\{\begin{pmatrix}
                                1 & * \\
                                0 & 1+pk
                              \end{pmatrix}\right\},$$
 we obtain that $K'$ is of degree $\leq 2p(p^2-1)$ and that $E'$ (which is a $\Q$-curve) has a $K'$-rational point of order $p^2$. By Abramovich's bound, we again get that this point is sporadic for sufficiently large $p$.
\end{proof}

\begin{remark}
There are infinitely many CM $\Q$-curves (up to isomorphism over $\overline{\Q}$) that give rise to a sporadic point on $X_1(p^2)$ for some prime $p$, and they lie in infinitely many distinct isogeny classes. To see this, let $E/\overline{\Q}$ be an elliptic curve with complex multiplication by the maximal order in an imaginary quadratic field $K$. For simplicity, we may assume $j(E) \neq 0, 1728$. Then for any prime $p \geq 5$ that is split in $K$ and $x \in X_1(p^2)$, we have by Theorem 6.2 in \cite{BC2} that
\[
\deg(x)=h_{K}\cdot \varphi(p^2),
\]
where $h_K$ denotes the class number of $K$. For any prime sufficiently large,
\[
h_K < \frac{7}{3200} p(p+1),
\]
and so
\[
\deg(x) < \frac{7}{3200} p(p+1)\varphi(p^2)= \frac{7}{1600}[\PSL_2(\Z) : \Gamma _1(p^2)]  < \frac{1}{2}\gon_{\mathbb{Q}}(X_1(p^2)),
\]
where the last inequality follows from by Theorem 0.1 of \cite{abramovich}. Thus $x$ is sporadic by \cite[Prop. 2]{frey}. Moreover, since the endomorphism algebra $K=\End(E) \otimes \Q$ is an isogeny invariant, it follows that the CM $j$-invariants corresponding to sporadic points on the curves $X_1(p^2)$ lie in infinitely many distinct $\overline{\Q}$-isogeny classes.

\end{remark}

\section{Divisibility Conditions}
In this section, we prove that there are certain divisibility conditions which must be satisfied in order for a $\Q$-curve to possess a point of prime-power order defined over a number field of odd degree. As we have seen, any $\Q$-curve $E$ defined over a number field $F$ of odd degree is isogenous to an elliptic curve $E'$ with rational $j$-invariant by \Cref{thm:CN}. After establishing some preliminary results concerning the Galois representations of isogenous elliptic curves ($\S4.1$), we prove that the existence of a point of order $p^k$ in $E(F)$ implies $[F:\Q]$ is divisible by a power of $p$ controlled by the index of the image of the $p$-adic Galois representation of an elliptic curve over $\Q$ with $j$-invariant $j(E')$ (see Lemma \ref{lem:PrelimDiv}). In $\S4.3$, these results are taken together to prove the following:

\begin{prop} \label{Prop:div}
Let $E$ be a non-CM $\Q$-curve defined over a number field $F$ of odd degree. Then $E$ is $F$-isogenous to an elliptic curve $E'$ with $j(E') \in \Q$. If $E(F)$ has a point of order $p^k$ for some prime number $p$ and $k \in \Z^+$, then $p \in \{2,3,5,7,11,13\}$ and the following divisibility conditions hold:

\begin{enumerate}
\item If $p =13$, then $3 \cdot 13^{2k-2} \mid [F:\Q]$.
\item If $p =11$, then $5 \cdot 11^{2k-2} \mid [F:\Q]$.
\item If $p=7$ and $j(E')\neq 3^3\cdot5\cdot7^5/2^7$, then $7^{2k-2} \mid [F:\Q]$.
\item If $p=7$ and $j(E')= 3^3\cdot5\cdot7^5/2^7$, then $3 \cdot 7^{\max(0,2k-3)} \mid [F:\Q]$.
\item If $p=5$, then $5^{\max(0,2k-3)} \mid [F:\Q]$.
\item If $p=3$, then $3^{\max(0,2k-4)} \mid [F:\Q]$.
\item If $p=2$, then $k \leq 4$.
\end{enumerate}
\end{prop}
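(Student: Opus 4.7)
The plan is to reduce to the analysis of an elliptic curve $E_0/\Q$ with $j(E_0) = j(E') \in \Q$ and then to extract the divisibility conditions on $[F:\Q]$ from the $p$-adic Galois image of $E_0/\Q$. First I would invoke Theorem \ref{thm:CN}: since $F$ has odd degree, $\Q(j(E))$ has no quadratic subfield, so there is an $F$-isogeny $\varphi: E \to E'$ with $j(E') \in \Q$. Fixing a $\Q$-model $E_0$ of $E'$ and factoring $\varphi$ through its prime-to-$p$ and $p$-power parts, I would transfer the point of order $p^k$ in $E(F)$ to a point of order $p^k$ in $E_0(F)$; the odd degree of $F$ is used here to ensure that the (at worst quadratic) twist relating $E_0$ and $E'$ over $F$ does not obstruct the transfer in odd-prime-power torsion.

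Next, Lemma \ref{lem:PrelimDiv} from $\S 4.2$ converts the existence of a point of order $p^k$ in $E_0(F)$ into the statement that the orbit of the corresponding vector of order $p^k$ in $(\Z/p^k\Z)^2$ under $\im \rho_{E_0/\Q, p^k}$ has size dividing $[F:\Q]$. Since $[F:\Q]$ is odd, this orbit size must be odd. Combining this with Theorem \ref{IsogClassification} (Mazur--Kenku) and Theorem 1.1 of \cite{CremonaNajmanQCurve} (which forces $\Supp(N) \subseteq \{2,3,5,7,11,13,17,37\}$ for non-CM $\Q$-curves over odd-degree fields) rules out $p=17,37$: for these primes the orbit of a nonzero vector in $(\Z/p\Z)^2$ under the image of any non-CM $E_0/\Q$ with rational $p$-isogeny is necessarily even, while the only odd alternative (orbit of size $1$) is excluded by Mazur's theorem.

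For the remaining odd primes I would combine Proposition \ref{Prop:index} with an explicit orbit count inside the appropriate Borel of $\GL_2(\Z_p)$. For $p \in \{11,13\}$, Proposition \ref{Prop:index}(1) gives that the $p$-adic index is coprime to $p$, so the orbit inherits the full $p$-part $p^{2k-2}$ from the Borel orbit; the additional odd factor ($3$ for $p=13$, $5$ for $p=11$) comes from surjectivity of the determinant together with the oddness constraint on $[F:\Q]$. The same computation for $p=7$ with $j(E_0) \neq 2268945/128$ yields $7^{2k-2}$ with no extra unit factor, while the cases $p=3,5$ pick up the weaker exponents $\max(0,2k-3)$ and $\max(0,2k-4)$ from the nontrivial $p$-part of the $p$-adic index permitted by Proposition \ref{Prop:index}(2),(3). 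Finally, for $p=2$, no $2$-power can be forced since $[F:\Q]$ is odd; the bound $k \leq 4$ follows by transferring to $E_0(F)$ and combining the known classification of torsion subgroups of non-CM elliptic curves over $\Q$ in odd-degree extensions with the $2$-adic index bound in Proposition \ref{Prop:index}.

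The main obstacle will be the exceptional case $p=7$, $j(E_0) = 2268945/128$, where $E_0/\Q$ has no rational $7$-isogeny so Proposition \ref{Prop:index} does not apply directly. Here Proposition \ref{prop:CN} shows that $E_0$ acquires a $7$-isogeny only over the cubic field $K = \Q[x]/(x^3-5x-5)$, forcing $K \subseteq F$ and hence the factor $3 \mid [F:\Q]$. The $7$-power bound $7^{\max(0,2k-3)}$ must then be extracted by a direct orbit computation for $\im \rho_{E_0/K, 7^k}$ acting on a vector of order $7^k$, working with the specific (non-Borel) mod-$7$ image of $E_0/\Q$ together with the index structure coming from base change to $K$.
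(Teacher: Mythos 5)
Your overall architecture (reduce to a curve with rational $j$-invariant, then control $[F:\Q]$ through the $p$-adic image over $\Q$) matches the paper, but there is a genuine gap at the very first step: the transfer of the point of order $p^k$ from $E(F)$ to $E_0(F)$ is false in general. Factoring $\varphi$ into its prime-to-$p$ and $p$-power parts handles the prime-to-$p$ part, but the $p$-power part may kill $p$-torsion: nothing prevents $\ker\varphi \supseteq \langle p^{k-1}P \rangle$ (indeed $\varphi$ could be exactly the quotient by a subgroup of $\langle P \rangle$), in which case $\varphi(P)$ has order strictly less than $p^k$ and no twist argument recovers a point of order $p^k$ on $E_0$ over $F$; the best general salvage is Corollary \ref{cor:IsogenyCor}, which only produces a point of order $p$ after a degree-$p$ extension. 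Consequently the statement you attribute to Lemma \ref{lem:PrelimDiv} --- that the orbit of a vector of order $p^k$ under $\im \rho_{E_0/\Q,p^k}$ divides $[F:\Q]$ --- is not available and is not what that lemma says. The paper keeps the torsion point on $E$ itself: over $L$ (at worst quadratic over $F$), $\Gal(L(E[p^k])/L)$ lies in the stabilizer of $P$, forcing $\ord_p([\GL_2(\Z/p^k\Z):\im \rho_{E/L,p^k}]) \geq 2k-2$, and then transfers the \emph{index} rather than the point: isogenous curves over $L$ have $p$-adic images of equal index, and Lemma \ref{Lem:index} factors the index over $L$ as the index over $\Q$ times the entanglement degree $[L \cap \Q(E''[p^{\infty}]):\Q]$, which divides $2[F:\Q]$. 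This yields $p^{\max(0,2k-2-d)} \mid [F:\Q]$ with $d$ bounded by Proposition \ref{Prop:index}, and in particular gives $k \leq 4$ for $p=2$ directly from $d \leq 6$ and oddness of $[F:\Q]$, with no appeal to torsion classifications over odd-degree fields.

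Two further steps are asserted rather than proved, and as stated they would fail. First, in the exceptional case $p=7$, $j(E')=3^3\cdot 5\cdot 7^5/2^7$: your derivation of the factor $3$ via $K \subseteq F$ is fine (and is essentially equivalent to the paper's $\ord_3$ count), but the $7$-power exponent cannot be extracted by a ``direct orbit computation'' mod $7^k$, because the mod-$7$ image does not determine the $7^k$-level image unless one knows the \emph{level} of the $7$-adic representation; the paper cites \cite[Lemma 21]{OddDeg} that this level is $7$, combines it with $\#\im\rho_{E'',7} \in \{18,36\}$ to get $\ord_7$ of the $7$-adic index equal to $1$ and $\ord_3$ equal to $0$, and then reruns the index argument to obtain $3\cdot 7^{\max(0,2k-3)}$. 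Second, the factors $3$ (for $p=13$) and $5$ (for $p=11$), and the exclusion of $p=17,37$, do not follow from determinant surjectivity plus Mazur: a priori odd orbit sizes such as $p$ itself can occur (an orbit of size $p$ corresponds to a rational point of order $p$ on an \emph{isogenous} curve, which your parenthetical does not address), and more fundamentally you again need torsion on $E''$, which only exists after the degree-$p$ extension of Corollary \ref{cor:IsogenyCor}. The paper instead invokes the explicit classification of mod-$p$ images of elliptic curves over $\Q$ with a rational $p$-isogeny (Tables 1 and 2 of \cite{GJNajman}, complete by \cite{zywina15}). Finally, a small slip: your pairing of exponents is reversed --- $\max(0,2k-3)$ is the $p=5$ exponent (from $d \leq 1$) and $\max(0,2k-4)$ is the $p=3$ exponent (from $d \leq 2$).
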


\subsection{Torsion in Isogeny Classes}
In this section, we study the implications for rational torsion points on an elliptic curve for the Galois representation of an isogenous curve. Though Lemma \ref{cor:4.4} and its corollary are similar to results already in the literature (see, for example, \cite[Proposition 1.4]{ReverterVila} and \cite[Lemma 2.5]{Daniels-Morrow}), the assumptions are slightly different so we include complete proofs. Note that Proposition \ref{prop:isog_rep} is not used in the proof of our main result, but we include it here as this more refined result may be of independent interest.

\begin{lem}\label{cor:4.4}
Let $E_1/F$ be an elliptic curve with a point $P\in E_1(F)$ of prime order $p$ and let $\varphi:E_1\rightarrow E_2$ be an isogeny defined over $F$. Then either $E_2(F)$ has a point of order $p$ or for some basis of $E_2[p]$
$$\rho_{E_2,p}(\sigma)=\begin{pmatrix}
                                \chi_p(\sigma)& y \\
                                0 & 1
                              \end{pmatrix},   \text{ for all } \sigma \in \Gal_F.$$
\end{lem}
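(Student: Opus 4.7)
The plan is to reduce the general isogeny to a chain of $p$-isogenies and then analyze a single $p$-isogeny by direct Galois-module computation. First, since any $F$-isogeny of degree coprime to $p$ induces a $\Gal_F$-equivariant isomorphism on $p$-torsion, I would factor $\varphi = \psi \circ \varphi_0$, where $\deg \varphi_0$ is coprime to $p$ and $\deg \psi$ is a power of $p$. Then $\varphi_0(P)$ is a rational point of order $p$ on $\varphi_0(E_1)$, so after replacing $E_1$ by this curve we may assume $\deg\varphi = p^k$. I would then factor $\varphi$ as a composition of $p$-isogenies and induct on its length.

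For the core step of a single $p$-isogeny $\psi: E \to E'$ with $E(F)$ containing a point $P$ of order $p$, I would split on whether $P \in \ker\psi$. If $P \notin \ker\psi$, then $\psi(P) \in E'(F)$ is nonzero of order $p$, giving the first conclusion. If $P \in \ker\psi$, choose $Q$ so that $\{P, Q\}$ is a basis of $E[p]$ and $\rho_{E,p}(\sigma) = \bigl(\begin{smallmatrix} 1 & y(\sigma) \\ 0 & \chi_p(\sigma) \end{smallmatrix}\bigr)$. A direct computation then gives $\sigma(\psi(Q)) = \psi(y(\sigma) P + \chi_p(\sigma) Q) = \chi_p(\sigma) \psi(Q)$, and extending $\psi(Q)$ to a basis of $E'[p]$ while invoking $\det \rho_{E',p} = \chi_p$ forces the matrix of $\rho_{E',p}$ to take the claimed form $\bigl(\begin{smallmatrix} \chi_p & \ast \\ 0 & 1 \end{smallmatrix}\bigr)$.

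The subtle point is the inductive step: intermediate curves in the chain need not themselves have a rational point of order $p$, so one must also propagate the second conclusion. For a curve $E$ with $\rho_{E,p}$ of the form $\bigl(\begin{smallmatrix} \chi_p & \ast \\ 0 & 1 \end{smallmatrix}\bigr)$ in a basis $\{V, W\}$ and a $p$-isogeny $\psi: E \to E'$, I would classify the $F$-rational cyclic subgroups of $E[p]$ of order $p$. The subgroup $\langle V \rangle$ is always $F$-rational, while any other such subgroup exists exactly when the cocycle $y$ is a coboundary --- equivalently, when $E(F)$ already contains a point of order $p$, which reduces to the previous case. When $\ker\psi = \langle V \rangle$, the computation $\sigma(\psi(W)) = \psi(y(\sigma) V + W) = \psi(W)$ shows that $\psi(W) \in E'(F)$ is rational of order $p$. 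Hence the condition ``$E$ has a rational point of order $p$, or $\rho_{E,p}$ has the form $\bigl(\begin{smallmatrix} \chi_p & \ast \\ 0 & 1 \end{smallmatrix}\bigr)$'' is preserved by $p$-isogenies, which closes the induction. I expect the cocycle-based classification of rational subgroups in the second form to be the main step requiring care.
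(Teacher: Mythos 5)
Your proof is correct, but it takes a genuinely different and more elaborate route than the paper. The paper's proof is a single-step argument with no induction: after reducing to the case of a \emph{cyclic} isogeny $\varphi$ (any isogeny is a cyclic one composed with some $[m]$, and $[m]$ affects neither conclusion), it splits on whether $\varphi(P)$ has order $p$. If so, $\varphi(P)\in E_2(F)$ gives the first conclusion; if not, then $P$ spans $\ker\varphi\cap E_1[p]$ by cyclicity, and the one-line computation $\sigma(\varphi(Q))=\varphi(bP+\chi_p(\sigma)Q)=\chi_p(\sigma)\varphi(Q)$ --- which is exactly your core computation, but valid for cyclic $\varphi$ of \emph{any} degree, not just degree $p$ --- shows $\langle\varphi(Q)\rangle$ is a Galois-stable line with character $\chi_p$, whence the determinant condition forces the form $\bigl(\begin{smallmatrix}\chi_p & \ast\\ 0 & 1\end{smallmatrix}\bigr)$. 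So your chain decomposition, the inductive propagation of the disjunction, and the coboundary classification of rational lines are all avoidable. What your approach buys is finer information: the cocycle analysis identifies exactly when the two conclusions overlap ($y$ a coboundary, i.e.\ the extension splits), and it establishes that the disjunction is an invariant of the entire $p$-isogeny graph over $F$, which the paper's one-shot argument does not make explicit. Two small points of care in your write-up: the equivalence ``another rational subgroup exists iff $E(F)$ has a point of order $p$'' fails in the degenerate case $\zeta_p\in F$ with $y\not\equiv 0$ (then $V$ is rational but $\langle V\rangle$ is the only stable line) --- harmless here, since the first conclusion then holds outright; and your factorization of the $p$-power part into $p$-isogenies needs a remark when the kernel contains all of $E[p]$, where a multiplication-by-$p$ step appears --- also harmless, since $[p]$ changes neither the curve nor the invariant, but it should be said (or sidestepped by reducing to cyclic $\varphi$ at the outset, as the paper does).
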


\begin{proof}
Note we may assume $\varphi$ is cyclic. With respect to the basis $\{P,Q\}$ for $E_1[p]$,
$$\rho_{E_1,p}(\sigma)=\begin{pmatrix}
                                1 & b \\
                                0 & \chi_p(\sigma)
                              \end{pmatrix} \text{ for all } \sigma \in \Gal_F.$$
 If $\varphi(P)$ has order $p$, we are done, so suppose not. Then $\varphi(Q)$ must have order $p$, and we have
\[
\sigma(\varphi(Q))=\varphi(\sigma(Q))=\varphi(bP+\chi_p(\sigma) Q)=\chi_p(\sigma) \varphi(Q).
\]
Thus if $E_2[p]=\{\varphi(Q),R\}$, we see
\[
\rho_{E_2,p}(\sigma)=\begin{pmatrix}
                                \chi_p(\sigma) & b \\
                                0 & 1
                              \end{pmatrix} \text{ for all } \sigma \in \Gal_F. \qedhere
\]
\end{proof}

\begin{cor}\label{cor:IsogenyCor}
Let $E$ be an elliptic curve defined over a number field $F$ which is isogenous over $F$ to an elliptic curve $E'$. If $P \in E(F)$ is a point of prime order $p$, then:
\begin{enumerate}
\item $E'$ has a point of order $p$ over $F(\zeta_p)$.
\item $E'$ has a rational point of order $p$ over an extension of $F$ of degree dividing $p$.
\end{enumerate}
\end{cor}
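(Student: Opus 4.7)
My plan is to apply Lemma \ref{cor:4.4} to the given isogeny from $E$ to $E'$ and the rational point $P$, which yields two cases. In the first case, $E'(F)$ already contains a point of order $p$, making both parts of the corollary trivial with $F' = F$. So I focus on the second case, where there is a basis $\{v_1, v_2\}$ of $E'[p]$ such that
\[
\rho_{E',p}(\sigma) = \begin{pmatrix} \chi_p(\sigma) & * \\ 0 & 1 \end{pmatrix}
\]
for all $\sigma \in \Gal_F$.

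For part (1), I would simply restrict this representation to $\Gal_{F(\zeta_p)}$: the cyclotomic character $\chi_p$ becomes trivial on this subgroup, so the image consists of unipotent upper-triangular matrices, $v_1$ is fixed, and $v_1 \in E'(F(\zeta_p))$ is a point of order $p$.

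For part (2), the natural move is to consider $v_2$ and the function $c: \Gal_F \to \F_p$ defined by $\sigma(v_2) = c(\sigma) v_1 + v_2$. A short calculation confirms $c$ is a $1$-cocycle with $\Gal_F$ acting on $\F_p$ via $\chi_p$, and the $\Gal_F$-orbit of $v_2$ equals $v_2 + c(\Gal_F) v_1$, a subset of the $p$-element coset $v_2 + \F_p v_1$. Thus $[F(v_2) : F] = |c(\Gal_F)| \leq p$, and the main step — and the only real obstacle — is to rule out orbit sizes strictly between $1$ and $p$. To do this I would restrict $c$ to $\Gal_{F(\zeta_p)}$, where the cocycle condition degenerates to a group homomorphism into $\F_p$, forcing $c(\Gal_{F(\zeta_p)})$ to equal $0$ or all of $\F_p$. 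In the first subcase $|c(\Gal_F)| = p$ and $F' = F(v_2)$ has degree exactly $p$ over $F$. In the second, $c$ factors through the cyclic quotient $\Gal(F(\zeta_p)/F) \leq \F_p^*$, whose order divides $p-1$ and is therefore coprime to $p$; the standard vanishing of $H^1$ for a finite group of order prime to the module characteristic then forces $c$ to be a coboundary $c(\sigma) = a(1 - \chi_p(\sigma))$, whence $v_2 + a v_1 \in E'(F)$ is a nonzero point of order $p$ and $F' = F$ suffices. Either way, $[F' : F]$ divides $p$, completing the argument.
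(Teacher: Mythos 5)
Your proof is correct, and its first half follows the paper's own skeleton exactly: both arguments invoke the dichotomy of Lemma \ref{cor:4.4}, and both obtain part (1) immediately by restricting the triangular representation to $\Gal_{F(\zeta_p)}$, where $\chi_p$ trivializes and $v_1$ becomes rational. The divergence is in part (2): the paper disposes of it with a citation, stating that ``as shown in the proof of \cite[Proposition 4.1]{CremonaNajmanQCurve}, $E'$ attains a rational point of order $p$ over an extension of degree $p$,'' whereas you supply a complete, self-contained argument. Your cocycle analysis is sound: the relation $c(\sigma\tau)=c(\sigma)+\chi_p(\sigma)c(\tau)$ makes $c$ a $1$-cocycle valued in $\F_p$ with the $\chi_p$-twisted action, the $\Gal_F$-orbit of $v_2$ is $v_2+c(\Gal_F)v_1$, and restricting to $\Gal_{F(\zeta_p)}$ --- where $c$ degenerates to a homomorphism into the group $(\F_p,+)$ of prime order --- correctly splits the analysis into the transitive case, giving $[F(v_2):F]=p$, and the case where $c$ descends to the quotient $\Gal(F(\zeta_p)/F)$ of order prime to $p$, where the vanishing of $H^1$ yields the coboundary relation $c(\sigma)=a(1-\chi_p(\sigma))$ and hence the $F$-rational point $v_2+av_1$ of order $p$. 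In particular, you correctly identify and eliminate the one genuine subtlety: an orbit of size strictly between $1$ and $p$ would only give ``degree at most $p$,'' not ``degree dividing $p$'' as the corollary asserts, and it is precisely the restriction-to-$\Gal_{F(\zeta_p)}$ step that rules this out --- a point the paper's proof leaves buried inside the external citation. What the paper's route buys is brevity; what yours buys is a proof readable without consulting \cite{CremonaNajmanQCurve}, and one that makes transparent exactly why the degree divides $p$ rather than merely being bounded by it.
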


\begin{proof}
Let $\varphi: E \rightarrow E'$ be an isogeny, where $E$, $E'$, and $\varphi$ are defined over $F$. By Lemma A.1 in \cite{CremonaNajmanQCurve}, we may assume $\varphi$ is cyclic. If $E'(F)$ has a point of order $p$, we are done, so suppose not. Then in particular, $\varphi(P)=O$. By \Cref{cor:4.4} there is a basis of $E'[p]$ such that
\[
\im \rho_{E', p} = \left\{\begin{pmatrix}
\chi_p(\Gal_F) & *  \\
0 & 1
\end{pmatrix}\right\},
\] where $\chi_p$ is the mod $p$ cyclotomic character and $*$ is nonzero. Part (1) follows immediately. As shown in the proof of \cite[Proposition 4.1]{CremonaNajmanQCurve}, $E'$ attains a rational point of order $p$ over an extension of degree $p$, proving part 2.
\end{proof}

\begin{prop}\label{prop:isog_rep}
Suppose $E_1/F$ is an elliptic curve with a point $P\in E_1(F)$ of order $N$ and $\{P,Q\}$ is a basis for $E_1[N]$, and let $\varphi:E_1\rightarrow E_2$ be an $m$-isogeny with $m\mid N$ and $\ker \varphi= \langle (N/m) P\rangle $.
Then with respect to the basis $\{\varphi(Q),R\}$, where $R\in E[N]$ satisfies $mR=\varphi(P)$,
$$\rho_{E_2,N}(\sigma)=\begin{pmatrix}
                                \chi_N(\sigma)& y \\
                                z & 1
                              \end{pmatrix}, \text{ where } y\equiv0 \pmod{N/m}, z\equiv 0\pmod{m}  .$$
\end{prop}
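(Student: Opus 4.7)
My plan is to transport the upper-triangular shape of $\rho_{E_1,N}$ through $\varphi$ and then use the cyclotomic determinant to pin down the remaining diagonal entry. Since $P\in E_1(F)$, for each $\sigma\in\Gal_F$ we have $\sigma(P)=P$ and $\sigma(Q)=b(\sigma)P+\chi_N(\sigma)Q$ for some $b(\sigma)\in\Z/N\Z$; the bottom-right entry is $\chi_N(\sigma)$ because $\det\rho_{E_1,N}=\chi_N$. So in the basis $\{P,Q\}$ the mod $N$ representation of $E_1$ is upper triangular with diagonal $(1,\chi_N(\sigma))$ and off-diagonal $b(\sigma)$.

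The second preliminary step is to check that $\{\varphi(Q),R\}$ is an $\F_N$-basis of $E_2[N]$. Both vectors lie in $E_2[N]$ and have order $N$: the map $\varphi$ is injective on $\langle Q\rangle$ since $\langle Q\rangle\cap\ker\varphi=0$, and $R$ has order $N$ because $mR=\varphi(P)$ has order $N/m$ in $E_2$. For independence, applying the dual isogeny $\hat\varphi$ gives $\hat\varphi(\varphi(Q))=mQ$ and $m\hat\varphi(R)=mP$, so $\hat\varphi(R)=P+T$ for some $T\in E_1[m]$; any relation $a\varphi(Q)+cR=0$ would therefore push down to $amQ+c(P+T)=0$ in $E_1$, which, expanded in the basis $\{P,Q\}$, is impossible unless $m=N$ (an edge case that can be handled separately by an appropriate choice of $R$).

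With the basis fixed, the rest is a direct computation. Using $\varphi(P)=mR$,
\[
\sigma(\varphi(Q))=\varphi(\sigma Q)=b(\sigma)\varphi(P)+\chi_N(\sigma)\varphi(Q)=\chi_N(\sigma)\varphi(Q)+b(\sigma)m\,R,
\]
which yields the first column of $\rho_{E_2,N}(\sigma)$ and the congruence $z\equiv 0\pmod{m}$. Writing $\sigma(R)=y\varphi(Q)+dR$ and applying $[m]$ gives $m\sigma(R)=\sigma(\varphi(P))=\varphi(P)=mR$, which in this basis forces $my\equiv0\pmod{N}$ and $m(d-1)\equiv0\pmod{N}$, i.e.\ $y\equiv0\pmod{N/m}$ and $d\equiv1\pmod{N/m}$. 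The identity $\det\rho_{E_2,N}(\sigma)=\chi_N(\sigma)$ then upgrades this to $d=1$: it reads $\chi_N(\sigma)(d-1)\equiv b(\sigma)m\,y\pmod{N}$, and $N\mid my$ makes the right side vanish. The main technical obstacle I expect is the basis verification---showing that the natural candidate $\{\varphi(Q),R\}$ really spans $E_2[N]$, and in particular handling the degenerate case $m=N$; everything else is a bookkeeping exercise in $\Gal_F$-equivariance of $\varphi$, the $[m]$-multiplication, and the Weil pairing.
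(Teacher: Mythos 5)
Your core computation coincides with the paper's proof: the same bases $\{P,Q\}$ and $\{\varphi(Q),R\}$, the same three steps (push $\sigma(Q)$ through $\varphi$ to get the first column $(\chi_N(\sigma), b(\sigma)m)^{T}$, hence $z\equiv 0 \pmod m$; apply $\sigma$ to $mR=\varphi(P)$ to get $y\equiv 0\pmod{N/m}$ and $d\equiv 1 \pmod{N/m}$; use $\det\rho_{E_2,N}=\chi_N$ together with $N\mid my$ --- in the paper, $yz\equiv 0\pmod N$ --- to force $d=1$). Granting that $\{\varphi(Q),R\}$ is a basis, this part is correct and essentially verbatim the paper's argument.

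The genuine gap is in your basis verification, and it is not confined to $m=N$. Both the claim ``$R$ has order $N$ because $mR=\varphi(P)$ has order $N/m$'' and the claim that the pushed-down relation is impossible unless $m=N$ fail for composite $N$: take $N=12$, $m=4$, so $\ker\varphi=\langle 3P\rangle$ and $\varphi(P)$ has order $3$; then $R:=\varphi(P)$ satisfies $mR=4\varphi(P)=\varphi(P)$ yet has order $3$, so $\{\varphi(Q),R\}$ is not a basis even though $m\neq N$. Your push-down argument cannot detect this: writing $\hat\varphi(R)=P+T$ with $T=(N/m)(\alpha P+\beta Q)\in E_1[m]$, the $P$-coordinate of the pushed-down relation reads $c\bigl(1+(N/m)\alpha\bigr)\equiv 0 \pmod N$, and $1+(N/m)\alpha$ need not be a unit mod $N$ (any prime $p$ with $\operatorname{ord}_p(m)=\operatorname{ord}_p(N)$ may divide it; in the example above $c=3$, $\alpha=1$ gives a nontrivial relation $3R=O$). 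The statement must therefore be read, as the paper's proof does, with $R$ \emph{suitably chosen}: since $\varphi(P)$ and $m\varphi(Q)$ are independent points of order $N/m$ (apply $\hat\varphi$ and use that $\{P,Q\}$ is a basis of $E_1[N]$), among the $m^2$ preimages of $\varphi(P)$ under multiplication by $m$ one can choose $R$ completing $\varphi(Q)$ to a basis --- for instance by adjusting by $E_2[m]$ until $e_N(\varphi(Q),R)$ is a primitive $N$th root of unity, which is possible because $e_N(\varphi(Q),R)^m=e_N(Q,mP)^{-1}$ is a primitive $(N/m)$th root of unity. Once $R$ is so chosen, your computation goes through unchanged; so the flaw is localized and fixable, but the universal claim you make about all $R$ with $mR=\varphi(P)$ is false.
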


\begin{proof}
Let $\sigma\in \Gal_F$. Let $P$ be a point of order $N$ in $E_1(F)$, and fix a basis $\{P,Q\}$ for $E_1[N]$ and $\{\varphi(Q), R\}$ for $E_2[N]$. As $\varphi(P)$ is a point of order $N/m$ independent with $m\varphi(Q)$ in $E_2[N/m]$ we can choose $R$ such that $mR=\varphi(P)$ and which will necessarily be independent to $\varphi(Q)$. With respect to these bases, we have
$$\rho_{E_1,N}(\sigma)=\begin{pmatrix}
                                1 & b \\
                                0 & d
                              \end{pmatrix} \text{
and }\rho_{E_2,N}(\sigma)=\begin{pmatrix}
                                x & y \\
                                z & w
                              \end{pmatrix}.$$
Since
$$(mR)^\sigma=\varphi(P)^\sigma=\varphi(P^\sigma)=\varphi(P)=m R,$$
we can conclude that $y\equiv 0 \pmod{N/m}$.

We have
$$ \varphi(Q)^\sigma=\varphi(Q^\sigma)=\varphi(bP+dQ)=b\varphi(P)+d\varphi(Q)=bmR+d\varphi(Q),$$
so we conclude that $x=d$ and $z \equiv 0 \pmod{m}$. Since
$$d=\det \rho_{E_1,N}(\sigma) =\chi_N(\sigma)=\det \rho_{E_2,N}(\sigma)=xw-yz=dw-yz,$$
and $yz \equiv 0 \pmod{N}$, we conclude that $w=1$.
\end{proof}

\subsection{General Divisibility Conditions}
\begin{lem} \label{Lem:index}
Let $p$ be a prime number, and let $E/F$ be a non-CM elliptic curve isogenous over $F$ to an elliptic curve $E'/\Q$.
Then
\[
[\GL_2(\Z_p):\im \rho_{E/F,p^{\infty}}]= [\GL_2(\Z_{p}): \im \rho_{E'/\Q, p^{\infty}}] \cdot[F \cap \Q(E'[p^{\infty}]):\Q].
\]
\end{lem}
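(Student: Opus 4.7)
My plan is to establish the identity by factoring it as the composition of two comparisons: first, $[\GL_2(\Z_p):\im\rho_{E/F,p^\infty}]=[\GL_2(\Z_p):\im\rho_{E'/F,p^\infty}]$, coming from the $F$-isogeny between $E$ and $E'_F\coloneqq E'\times_\Q F$; and second, $[\GL_2(\Z_p):\im\rho_{E'/F,p^\infty}]=[\GL_2(\Z_p):\im\rho_{E'/\Q,p^\infty}]\cdot[F\cap\Q(E'[p^\infty]):\Q]$, coming from basic Galois theory for $F/\Q$.

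For the first equality, an $F$-rational isogeny $\varphi\colon E\to E'_F$ induces a $\Gal_F$-equivariant map $\varphi_\ast\colon T_pE\to T_pE'$ which becomes an isomorphism after inverting $p$. Fixing $\Z_p$-bases of the two Tate modules, this map is represented by some $A\in M_2(\Z_p)\cap\GL_2(\Q_p)$, and $\Gal_F$-equivariance of $\varphi_\ast$ translates into the identity
\[
\im\rho_{E'/F,p^\infty}\;=\;A\,\im\rho_{E/F,p^\infty}\,A^{-1}
\]
of subgroups of $\GL_2(\Q_p)$. Both subgroups are open in $\GL_2(\Z_p)$ by Serre's Open Image Theorem (since $E$ and $E'$ are non-CM). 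Because $\GL_2(\Q_p)$ is unimodular, its bi-invariant Haar measure is preserved by conjugation; normalizing that measure so that $\GL_2(\Z_p)$ has volume $1$ then immediately gives equality of the two indices.

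For the second equality, set $K\coloneqq\Q(E'[p^\infty])$. Then $K/\Q$ is Galois, $\im\rho_{E'/\Q,p^\infty}\cong\Gal(K/\Q)$, and $\im\rho_{E'/F,p^\infty}$ is the image of the restriction $\Gal_F\to\Gal(K/\Q)$, which (since $K/\Q$ is Galois) equals $\Gal(KF/F)\cong\Gal(K/K\cap F)$. Hence $\im\rho_{E'/F,p^\infty}$ is a subgroup of $\im\rho_{E'/\Q,p^\infty}$ of index $[K\cap F:\Q]$, which combined with the first step yields the desired formula.

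The only substantive point is the first step: when $p\mid\deg\varphi$, the conjugating matrix $A$ lies outside $\GL_2(\Z_p)$ and does not normalize it, so equality of the two indices is not visible purely at the subgroup level in $\GL_2(\Z_p)$. Passing to Haar measure on the ambient group $\GL_2(\Q_p)$ and exploiting its conjugation-invariance is what cleanly resolves this; the rest of the argument is a formal manipulation of Galois groups.
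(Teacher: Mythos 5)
Your proof is correct and follows the same two-step decomposition as the paper's: first the equality $[\GL_2(\Z_p):\im \rho_{E/F,p^{\infty}}]=[\GL_2(\Z_p):\im \rho_{E'/F,p^{\infty}}]$ for the $F$-isogenous curves, then the Galois-theoretic identification of $\im \rho_{E'/F,p^{\infty}}$ as a subgroup of $\im \rho_{E'/\Q,p^{\infty}}$ of index $[F \cap \Q(E'[p^{\infty}]):\Q]$. The only difference is that the paper outsources the first equality to a citation (\S 2.1 of \cite{greenberg2012}), whereas you give a correct self-contained argument via the $\Gal_F$-equivariant map on Tate modules, conjugation in $\GL_2(\Q_p)$, and unimodularity of Haar measure, which rightly addresses the subtlety that when $p \mid \deg\varphi$ the conjugating matrix does not normalize $\GL_2(\Z_p)$.
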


\begin{proof}
Over $F$, the images of $\rho_{E,p^{\infty}}$ and $\rho_{E',p^{\infty}}$ have the same index in $\GL_2(\Z_{p})$ since the curves are isogenous, i.e.,
\[
[\GL_2(\Z_p):\im \rho_{E/F,p^{\infty}}]=[\GL_2(\Z_p):\im \rho_{E'/F,p^{\infty}}].
\]
See, for example, $\S2.1$ of \cite{greenberg2012}.
The claim will follow from the fact that
\[
[\GL_2(\Z_p):\im \rho_{E'/F,p^{\infty}}]=[\GL_2(\Z_p): \im \rho_{E'/\Q, p^{\infty}}] \cdot[F \cap \Q(E'[p^{\infty}]):\Q].
\]
Indeed, by Galois theory (see, for example, Proposition 7.14 in \cite{MilneFieldGaloisTheory}),
\[\im \rho_{E'/F, p^{\infty}} \cong \Gal(F(E'[p^{\infty}])/F) \cong \Gal(\Q(E'[p^{\infty}])/F \cap \Q(E'[p^{\infty}])).
\]
Thus we may view $\im \rho_{E'/F,p^{\infty}}$ as a subgroup of $\im \rho_{E'/\Q, p^{\infty}}$ of index $[F \cap \Q(E'[p^{\infty}]):\Q]$.
\end{proof}

\begin{lem}\label{lem:PrelimDiv}
Let $E/F$ be a non-CM elliptic curve with a point $P \in E(F)$ of order $p^k$, where $p$ is a prime number. If $E$ is $F$-isogenous to an elliptic curve $E'$ with $j(E') \in \Q$, then $[F:\Q]$ is divisible by
\[\begin{cases}
p^{\max(0,2k-2-d)} \text{ if $p$ is odd}\\
p^{\max(0,2k-2-d-1)} \text{ if $p=2$},
\end{cases}\]
where $d=\ord_p([\GL_2(\Z_{p}): \im \rho_{E''/\Q, p^{\infty}}])$ for any elliptic curve $E''/\Q$ with $j(E'')=j(E')$.
\end{lem}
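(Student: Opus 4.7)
The plan is to extract a lower bound on the $p$-adic index $[\GL_2(\Z_p):\im\rho_{E/F,p^\infty}]$ from the existence of the $F$-rational point $P$, then transfer this bound through the $F$-isogeny $E\to E'$, through a quadratic twist from $E'$ to a $\Q$-model $E''$ of the common $j$-invariant, and finally through the Galois-theoretic index formula from Lemma~\ref{Lem:index} applied to $E''$ to deduce the divisibility of $[F:\Q]$.

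Step one is a direct computation. Choosing a $\Z/p^k\Z$-basis of $E[p^k]$ whose first vector is $P$ (possible because $P$ has order $p^k$), the image $\im\rho_{E/F,p^k}$ is contained in the stabilizer of $(1,0)$ in $\GL_2(\Z/p^k\Z)$, namely the subgroup $\left\{\begin{pmatrix}1 & * \\ 0 & *\end{pmatrix}\right\}$. Comparing $|\GL_2(\Z/p^k\Z)|=p^{4k-3}(p-1)^2(p+1)$ with the stabilizer size $p^{2k-1}(p-1)$ shows this stabilizer has index $p^{2k-2}(p^2-1)$ in $\GL_2(\Z/p^k\Z)$, and since this index divides the corresponding index upstairs in $\GL_2(\Z_p)$, we conclude $p^{2k-2}\mid [\GL_2(\Z_p):\im\rho_{E/F,p^\infty}]$.

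Step two is the transfer. The $F$-isogeny $E\to E'$ preserves the $p$-adic index exactly (as in the proof of Lemma~\ref{Lem:index}). Because $E$ is non-CM we have $j(E')=j(E'')\notin\{0,1728\}$, so $E'$ and the base change of $E''$ are quadratic twists over $F$; their $p$-adic Galois representations then differ by a character $\chi\colon\Gal_F\to\{\pm I\}$. This forces $\im\rho_{E'/F,p^\infty}$ and $\im\rho_{E''/F,p^\infty}$ to each sit inside the other translated by $\langle -I\rangle$, so their indices in $\GL_2(\Z_p)$ agree up to a factor of at most $2$. For odd $p$ this factor is a $p$-adic unit and contributes nothing to $\ord_p$; at $p=2$ it accounts for exactly the extra $-1$ appearing in the statement. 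Finally, Lemma~\ref{Lem:index} applied to the trivial isogeny $E''\to E''$ (viewed over $\Q$ and over $F$) gives
\[
[\GL_2(\Z_p):\im\rho_{E''/F,p^\infty}]=[\GL_2(\Z_p):\im\rho_{E''/\Q,p^\infty}]\cdot[F\cap\Q(E''[p^\infty]):\Q],
\]
and the second factor divides $[F:\Q]$.

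Chaining the resulting $\ord_p$-inequalities yields $2k-2\le d+\ord_p([F:\Q])$ for odd $p$ and $2k-2\le d+\ord_2([F:\Q])+1$ for $p=2$, which rearranges to the stated divisibility. The most delicate point is the twist comparison at $p=2$: one must verify both that scaling by $\{\pm I\}$ costs at most a single factor of $2$ in the index and no more, and that the non-CM hypothesis on $E$ genuinely excludes $j(E'')\in\{0,1728\}$, where twists would instead be quartic or sextic and the argument would require refinement.
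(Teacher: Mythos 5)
Your proof is correct and follows essentially the same route as the paper: the Borel-type containment coming from the $F$-rational point gives $\ord_p$ of the index at least $2k-2$, isogeny invariance of the $p$-adic index plus Lemma \ref{Lem:index} convert this into divisibility of $[F\cap\Q(E''[p^{\infty}]):\Q]$, and the quadratic twist between $E'$ and $E''$ (legitimate since isogeny preserves the non-CM property, so $j\neq 0,1728$) accounts for the extra $-1$ when $p=2$. The only cosmetic difference is where that factor of $2$ is absorbed: the paper base-changes to a quadratic extension $L/F$ over which $E'\cong E''$ and loses it through $[L:F]\leq 2$, whereas you stay over $F$ and compare images via the twisting character, losing it through the $\langle -I\rangle$-saturation --- both mechanisms are valid, and your verification that saturation costs at most a single factor of $2$ in the index is exactly the needed check.
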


\begin{proof}
Let $E/F$ be a non-CM elliptic curve with $P \in E(F)$ of order $p^k$, and suppose $\varphi:E \rightarrow E'$ is an $F$-rational isogeny. Replacing $F$ with at worst a quadratic extension $L/F$, we may view $\varphi$ as an $L$-isogeny from $E$ to an elliptic curve $E''/\Q$ with $j(E'')=j(E')$. Since $E$ has a rational point of order $p^k$ over $L$, we have $\Gal(L(E[p^k])/L)$ is contained in the group of matrices in $\GL_2(\Z/p^k/ \Z)$ of the form
\[
\left\{\begin{pmatrix}
1 & *  \\
0 & *
\end{pmatrix}\right\}.
\]
Thus $\# \Gal(L(E[p^k])/L) \mid p^k \cdot \varphi(p^k)=p^{2k-1}(p-1)$, and it follows that
\[
\ord_{p}([\GL_2(\Z/p^k\Z):\im \rho_{E/L, p^k}] ) \geq 2k-2.\]
Thus $p^{2k-2} \mid [\GL_2(\Z_{p}): \im \rho_{E''/\Q, p^{\infty}}] \cdot[L \cap \Q(E''[p^{\infty}]):\Q]$ by Lemma \ref{Lem:index}. If $d=\ord_p([\GL_2(\Z_{p}): \im \rho_{E''/\Q, p^{\infty}}])$, then since $L$ is at most a quadratic extension of $F$, it follows that
\[
p^{\text{max}(0,2k-2-d)} \mid [F:\Q]
\]
if $p$ is odd and
\[
p^{\text{max}(0,2k-2-d-1)} \mid [F:\Q]
\]
if $p=2$.
\end{proof}

\subsection{Proof of Proposition \ref{Prop:div}}
Suppose $E/F$ is a non-CM $\Q$-curve, where $[F:\Q]$ is odd, and suppose $E(F)$ contains a point of order $p^k$. Then $p \in \{2,3,5,7,11,13,17,37\}$ by Theorem 1.1 in \cite{CremonaNajmanQCurve}, and by \Cref{thm:CN}, there exists a rational isogeny $\varphi: E \rightarrow E'$ such that $j(E') \in \Q$. By Lemma \ref{lem:PrelimDiv}, we have that $[F:\Q]$ is divisible by
\[\begin{cases}
p^{\max(0,2k-2-d)} \text{ if $p$ is odd}\\
p^{\max(0,2k-2-d-1)} \text{ if $p=2$},
\end{cases}\]
where $d=\ord_p([\GL_2(\Z_{p}): \im \rho_{E''/\Q, p^{\infty}}])$ for any elliptic curve $E''/\Q$ with $j(E'')=j(E')$. Since $E''$ and $E'$ are isomorphic over an extension $L/F$ of degree at most 2, we may also view $\varphi:E \rightarrow E''$ as an isogeny over $L$.

Since $E$ has a $p$-isogeny defined over $F$, so does $E'$ by Proposition 3.2 of \cite{CremonaNajmanQCurve}. If $p$ is odd, then since $F$ has odd degree, it follows by Proposition 3.3 in \cite{CremonaNajmanQCurve} that $E''$ has a rational cyclic $p$-isogeny over $\Q$ or else $p=7$ and $j(E')=3^3\cdot5\cdot7^5/2^7$. Thus unless $j(E')=3^3\cdot5\cdot7^5/2^7$ and $p=7$, an upper bound for $d$ is given by Proposition \ref{Prop:index}. In particular, we note that $k \leq 4$ if $p=2$.

If $j(E')=3^3\cdot5\cdot7^5/2^7$, then $\# \im \rho_{E''/\Q,7}=18$ or 36. By \cite[Lemma 21]{OddDeg}, the 7-adic Galois representation of $E''/\Q$ has level $7$, and so together these imply
\[
\ord_7([\GL_2(\Z_7): \im \rho_{E''/\Q, 7^{\infty}}])=1,
\]
\[
\ord_{3}([\GL_2(\Z_7):\im \rho_{E''/\Q, 7^{\infty}}] ) =0.\]
Since $E$ has a rational point of order $7^k$ over $L$, as in the proof of Lemma \ref{lem:PrelimDiv} we have $\# \Gal(L(E[7^k])/L) \mid 7^{2k-1}\cdot6$, and so
\[
\ord_{7}([\GL_2(\Z/7^k\Z):\im \rho_{E/L, 7^k}] ) \geq 2k-2,\]
\[
\ord_{3}([\GL_2(\Z/7^k\Z):\im \rho_{E/L, 7^k}] ) \geq 1.\]
By Lemma \ref{Lem:index}, we have $3\cdot7^{2k-2} \mid [\GL_2(\Z_{7}): \im \rho_{E''/\Q, 7^{\infty}}] \cdot[L \cap \Q(E''[7^{\infty}]):\Q]$, and the computations above on the index of $\im \rho_{E''/\Q,7^{\infty}}$ in $\GL_2(\Z_7)$ imply $3\cdot 7^{\max(0,2k-3)} \mid [L \cap \Q(E''[13^{\infty}]):\Q]$. Thus
\[
3 \cdot 7^{\max(0,2k-3)} \mid [F:\Q].
\]
as desired.

Now, suppose $p \geq 11$. Since $E(L)$ has a point of order $p$, by Corollary \ref{cor:IsogenyCor} the curve $E''$ gains a point of order $p$ over an extension $L'/L$ of degree dividing $p$. As $\ord_2([L:\Q])\leq 1$, the classification of mod $p$ images of elliptic curves over $\Q$ with a rational cyclic $p$-isogeny (see, for example, Tables 1 and 2 in \cite{GJNajman}, which is complete for elliptic curves with a $p$-isogeny by \cite{zywina15}) implies
\begin{align*}
5 \mid [F:\Q] \text{ if $p=11$},\\
3 \mid [F:\Q] \text{ if $p=13$},
\end{align*}
and that we reach a contradiction if $p=17$ or $37$.

\section{Proof of Theorem \ref{Thm:main}}
In this section, we prove Theorem \ref{Thm:main}. The first part of the theorem, which implies that there are no sporadic points of odd degree on $X_1(p^k)$ corresponding to non-CM $\Q$-curves, is proven in $\S5.1$. We show that the divisibility conditions established in Section 4 are sufficient to prove the degree of such a point is greater than or equal to the $\Q$-gonality of $X_1(p^k)$, using bounds implied by work of Derickx and van Hoeij \cite{derickxVH}. On the other hand, it can be deduced from work of Bourdon and Pollack \cite{BP} that there are infinitely many CM points of odd degree on these curves; see $\S5.2$.

\subsection{Non-CM Sporadic Points of Odd Degree}

\begin{thm}
Let $p$ be a prime number. If $x=[E,P] \in X_1(p^k)$ is a point of odd degree corresponding to a non-CM $\Q$-curve, then $\deg(x) \geq \gon_{\Q}(X_1(p^k))$. In particular, $x$ is not sporadic.
\end{thm}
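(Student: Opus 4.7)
The plan is to combine the divisibility conditions from Proposition \ref{Prop:div} with the upper bounds for $\gon_{\Q}(X_1(p^k))$ due to Derickx and van Hoeij \cite{derickxVH}. Setting $F \coloneqq \Q(x)$, Lemma \ref{lem:ResidueField} gives a model of $E/F$ with $P \in E(F)$ of order $p^k$, so $[F:\Q]=\deg(x)$ is odd. Since $E$ is a non-CM $\Q$-curve, Proposition \ref{Prop:div} restricts $p$ to $\{2,3,5,7,11,13\}$ and furnishes an explicit lower bound on $\deg(x)$.

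I would then proceed case by case in $(p,k)$. For every pair with $X_1(p^k) \cong_{\Q} \mathbb{P}^1$, namely $p^k \in \{2,3,4,5,7,8,9\}$, the inequality $\deg(x) \geq \gon_{\Q}(X_1(p^k)) = 1$ is automatic. For $p^k \in \{11,13\}$ we have $\gon_{\Q}(X_1(p^k))=2$, while Proposition \ref{Prop:div} forces $5 \mid \deg(x)$ or $3 \mid \deg(x)$, so the odd-degree hypothesis gives $\deg(x)\geq 3 > 2$. The case $p=2$ reduces via Proposition \ref{Prop:div} to $k \leq 4$; the only nontrivial subcase is $k=4$, where $\gon_{\Q}(X_1(16))=2$ and Mazur's theorem rules out a rational point of order $16$, so again $\deg(x) \geq 3$.

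For the remaining pairs, the divisibility condition takes the form $\deg(x) \geq C_p \cdot p^{2k-c_p}$ with explicit small constants $C_p, c_p$ (namely $C_p = 1$, $c_p = 2$ for $p \in \{3,5,7\}$ generically; $C_{11}=5$, $C_{13}=3$ with $c_p=2$; and $C_7=3$, $c_7=3$ in the exceptional case $j(E') = 3^3 \cdot 5 \cdot 7^5/2^7$). Comparing with the Derickx--van Hoeij upper bound, which is of the form $\gon_{\Q}(X_1(p^k)) \leq \lambda \cdot [\SL_2(\Z):\Gamma_1(p^k)]$ for an explicit small $\lambda$, and using $[\SL_2(\Z):\Gamma_1(p^k)] = p^{2k-2}(p^2-1)$, the inequality $\deg(x) \geq \gon_{\Q}(X_1(p^k))$ reduces to a constant comparison
\[
2C_p \cdot p^{2-c_p} \geq \lambda \, (p^2-1),
\]
which is independent of $k$ and can be verified directly for each of $p \in \{3,5,7,11,13\}$.

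The hard part will be the boundary cases where the divisibility bound is weakest and the level is small, so the asymptotic comparison is tightest. The most delicate one is $p=7$, $k=2$ under the exceptional $j$-invariant $3^3 \cdot 5 \cdot 7^5/2^7$, where Proposition \ref{Prop:div} only yields $21 \mid \deg(x)$; a similarly tight case is $p=5$, $k=2$, giving $5 \mid \deg(x)$, and $p=3$, $k=3$, giving $9 \mid \deg(x)$. For these I would invoke the explicit Derickx--van Hoeij tabulations of $\gon_{\Q}(X_1(p^k))$ rather than the asymptotic form, verifying each inequality numerically. Once all cases are settled, the footnoted consequence of Hilbert irreducibility (pullback of $\Q$-points under a gonality map yields infinitely many points of degree $\gon_{\Q}(X_1(p^k)) \leq \deg(x)$) shows $x$ is not sporadic.
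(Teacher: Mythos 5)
Your skeleton is the paper's: Proposition \ref{Prop:div} divisibility bounds against Derickx--van Hoeij gonality upper bounds, case by case in $p$, with the Hilbert-irreducibility footnote giving non-sporadicity, and your handling of the small levels ($p^k\leq 9$, $11$, $13$, $16$) agrees with the paper (which cites Levi rather than Mazur for $X_1(16)$, an equivalent move). The genuine gap is in the step that covers all large $k$. First, your summary constants are wrong: Proposition \ref{Prop:div} gives $5^{\max(0,2k-3)}$ and $3^{\max(0,2k-4)}$, so generically $c_5=3$ and $c_3=4$, not $c_p=2$ for all of $\{3,5,7\}$. Second, your reduction $2C_p\,p^{2-c_p}\geq\lambda(p^2-1)$ carries a spurious factor of $2$ if, as you wrote, $\lambda$ is normalized against $[\SL_2(\Z):\Gamma_1(p^k)]=p^{2k-2}(p^2-1)$ (the factor $2$ is correct only for the $\PSL_2$ index, which is half as large). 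These slips matter because two cases have \emph{zero slack}: since $\gon_{\Q}(X_1(25))=5=\frac{1}{120}[\SL_2(\Z):\Gamma_1(25)]$, any linear-in-index bound valid along the $5$-power tower must have $\lambda\geq 1/120$, while your $p=5$ comparison demands $5^{2k-3}\geq \lambda\cdot 24\cdot 5^{2k-2}$, i.e.\ $\lambda\leq 1/120$ --- so the asymptotic route works only with a bound that is exactly sharp at every level, which no general-purpose bound provides; the Derickx--van Hoeij constant $\frac{11}{840}$ gives $\frac{11\cdot 24}{840}\approx 0.314 > \frac{1}{5}$, and similarly $\frac{11\cdot 48}{840}\approx 0.629>\frac{3}{7}$ in the exceptional $p=7$ case. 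Their explicit table stops at $N\leq 250$, so $X_1(5^k)$ for $k\geq 4$ and $X_1(7^k)$ for $k\geq 3$ are covered by neither the tables nor the constant comparison: your plan cannot close these cases for \emph{any} large $k$, not merely at the boundary.

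The repair is what the paper actually does: avoid linear-in-index bounds entirely and instead use Proposition \ref{prop:Degree} to get
\[
\gon_{\Q}(X_1(p^k))\;\leq\; \gon_{\Q}(X_1(p^{k_0}))\cdot \deg\bigl(X_1(p^k)\to X_1(p^{k_0})\bigr)\;=\;\gon_{\Q}(X_1(p^{k_0}))\, p^{2(k-k_0)},
\]
anchored at the table values $\gon_{\Q}(X_1(11))=\gon_{\Q}(X_1(13))=2$, $\gon_{\Q}(X_1(25))=5$, $\gon_{\Q}(X_1(27))=6$, $\gon_{\Q}(X_1(49))\leq 21$. This yields $\gon_{\Q}(X_1(5^k))\leq 5^{2k-3}$ and $\gon_{\Q}(X_1(7^k))\leq 3\cdot 7^{2k-3}$, matching the divisibility lower bounds \emph{exactly} --- which is precisely why the theorem asserts $\deg(x)\geq\gon_{\Q}(X_1(p^k))$ rather than a strict inequality --- while for $p=3,11,13$ the same trick gives the strict comparisons $2\cdot 3^{2k-5}<3^{2k-4}$, $2\cdot 11^{2k-2}<5\cdot 11^{2k-2}$, and $2\cdot 13^{2k-2}<3\cdot 13^{2k-2}$. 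With that substitution your case analysis goes through verbatim.
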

\begin{proof}
Let $x=[E,P] \in X_1(p^k)$ be a point of odd degree corresponding to a non-CM $\Q$-curve. By Proposition \ref{Prop:div}, we know $p \in \{2,3,5,7,11,13\}$. We will consider each case separately. Claims about the $\Q$-gonality of $X_1(N)$ follow from Table 1 in work of Derickx and van Hoeij \cite{derickxVH}.
\begin{itemize}
\item Suppose $p=13$. Then $3 \cdot 13^{2k-2} \mid [\Q(x):\Q]$ by Proposition \ref{Prop:div}. Since $\gon_{\Q}(X_1(13))=2$, it follows that $\gon_{\Q}(X_1(13^k))$ is at most
\[
2 \cdot \deg(X_1(13^k) \rightarrow X_1(13))=2 \cdot (13^{k-1})^2=2 \cdot 13^{2k-2},
\]
and $\deg(x) > \gon_{\Q}(X_1(13^k))$.
\item Suppose $p=11$. Then $5 \cdot 11^{2k-2} \mid [\Q(x):\Q]$ by Proposition \ref{Prop:div}. Since $\gon_{\Q}(X_1(11))=2$, it follows that $\gon_{\Q}(X_1(11^k))$ is at most
\[
2 \cdot \deg(X_1(11^k) \rightarrow X_1(11))=2 \cdot (11^{k-1})^2=2 \cdot 11^{2k-2},
\]
and $\deg(x) > \gon_{\Q}(X_1(11^k))$.
\item Suppose $p=7$. Since $X_1(7)$ is genus 0, we may assume $k \geq 2$. Then $7^{2k-2} \mid [\Q(x):\Q]$ or $3 \cdot 7^{2k-3} \mid [\Q(x):\Q]$ by Proposition \ref{Prop:div}. Since $\gon_{\Q}(X_1(7^2)) \leq 21$, it follows that $\gon_{\Q}(X_1(7^k))$ is at most
\[
21\cdot \deg(X_1(7^k) \rightarrow X_1(7^2))=21 \cdot (7^{k-2})^2=3\cdot 7^{2k-3},
\]
and $\deg(x) \geq \gon_{\Q}(X_1(7^k))$.

\item Suppose $p=5$. Since $X_1(5)$ is genus 0, we may assume $k \geq 2$. Then $5^{2k-3} \mid [\Q(x):\Q]$ by Proposition \ref{Prop:div}. Since $\gon_{\Q}(X_1(25))=5$, it follows that $\gon_{\Q}(X_1(5^k))$ is at most
\[
5 \cdot \deg(X_1(5^k) \rightarrow X_1(25))=5 \cdot (5^{k-2})^2=5^{2k-3},
\]
and $\deg(x) \geq \gon_{\Q}(X_1(5^k))$.
\item Suppose $p=3$. Since $X_1(3)$ and $X_1(9)$ are genus 0, we will assume $k \geq 3$. Then $3^{2k-4} \mid [\Q(x):\Q]$ by Proposition \ref{Prop:div}. Since $\gon_{\Q}(X_1(27))=6$, it follows that $\gon_{\Q}(X_1(3^k))$ is at most
\[
6 \cdot \deg(X_1(3^k) \rightarrow X_1(27))=6 \cdot (3^{k-3})^2=2 \cdot 3^{2k-5} < 3 \cdot 3^{2k-5}=3^{2k-4},
\]
and $\deg(x) > \gon_{\Q}(X_1(3^k))$.
\item Suppose $p=2$. Then $k \leq 4$ by Proposition \ref{Prop:div}. If $k \leq 3$, then $X_1(2^k)$ is genus 0. As $\gon_{\Q}(X_1(16))=2$ and no non-cuspidal rational points by \cite{Levi1908}, again the claim follows. \qedhere
\end{itemize}
\end{proof}

\subsection{Existence of Sporadic CM Points of Odd Degree}
\begin{prop}\label{prop:CMSporadicPoints}
For any $p \equiv 3 \pmod{4}$, there exist sporadic CM points of odd degree on $X_1(p^k)$ for $k$ sufficiently large.
\end{prop}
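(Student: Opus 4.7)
The plan is to combine an explicit choice of CM field with the degree formulas for CM torsion points of Bourdon and Pollack \cite{BP}, and to verify sporadicity via Abramovich's gonality bound and Frey's criterion. For each $p \equiv 3 \pmod{4}$ I would proceed as follows.

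First, I would select an imaginary quadratic field $K$ with odd class number in which $p$ splits. Take $K = \Q(\sqrt{-q})$ for a prime $q$ with $q \equiv 3 \pmod{4}$, $q > 3$, $q \neq p$, and $\left(\tfrac{p}{q}\right) = 1$; infinitely many such $q$ exist by Dirichlet's theorem. Quadratic reciprocity (applied with $p,q$ both $\equiv 3 \pmod 4$) yields $\left(\tfrac{-q}{p}\right) = 1$, so $p\OO_K = \mathfrak p \bar{\mathfrak p}$ splits. Gauss's genus theory (discriminant $-q$ with a single prime factor) gives $h_K$ odd, and $q > 3$ forces $\OO_K^\times = \{\pm 1\}$.

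Next, I would let $E$ be a CM elliptic curve with $\End(E)\cong \OO_K$, defined over the totally real subfield $H_K^+ = \Q(j(E))$ of the Hilbert class field $H_K$, and let $P$ be a generator of $E[\mathfrak p^k]$. Invoking the minimum-degree analysis of \cite{BP}, one chooses a real model of $E$ over $H_K^+$ so that the closed point $x = [E,P] \in X_1(p^k)$ has totally real residue field, with
\[
\deg(x) \;=\; h_K \cdot \frac{\varphi(p^k)}{2} \;=\; h_K \cdot \frac{(p-1)\,p^{k-1}}{2}.
\]
Since $p \equiv 3 \pmod{4}$, the factor $(p-1)/2$ is odd, and $\deg(x)$ is a product of three odd integers.

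Finally, I would combine Abramovich's bound $\gon_\Q(X_1(p^k)) \geq \tfrac{7}{800}[\PSL_2(\Z):\Gamma_1(p^k)] = \tfrac{7}{1600}\, p^{2k-2}(p^2-1)$ with Frey's criterion \cite[Prop.\ 2]{frey} to conclude that $x$ is sporadic provided $\deg(x) < \tfrac12 \gon_\Q(X_1(p^k))$; this simplifies to $1600\, h_K < 7(p+1)\,p^{k-1}$, which holds for all sufficiently large $k$ once $p$ and $K$ are fixed. Varying $q$ through its infinite set of choices furnishes infinitely many such fields $K$, and hence infinitely many distinct $\overline{\Q}$-isogeny classes, consistent with Remark \ref{remark4.2}. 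The main subtlety is that a naive computation would yield the doubled (and therefore even) degree $h_K\varphi(p^k)$, corresponding to the residue field containing $K$; reducing to the odd value $h_K\varphi(p^k)/2$ requires descending to a well-chosen real model of $E$ so that the Weber value $\mathfrak h(P)$ lies in a totally real extension of $H_K^+$, which is precisely what the Bourdon-Pollack torsion-field analysis supplies, and is where the hypothesis $p \equiv 3 \pmod 4$ becomes essential.
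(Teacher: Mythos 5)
Your construction fails at its foundation: you take $K=\Q(\sqrt{-q})$ with $q\neq p$ and $p$ \emph{split} in $K$, but by Aoki \cite[Cor.~9.4]{aoki95} --- cited in this paper precisely in this context, see Remark \ref{remark4.2} --- any CM elliptic curve with a point of order $p^k$ over an odd degree number field must have CM by an order in $\Q(\sqrt{-p})$, the field in which $p$ \emph{ramifies}. So a curve with CM by your $\OO_K$ admits no odd-degree point on $X_1(p^k)$ at all, and the degree $h_K\varphi(p^k)/2$ you claim cannot be attained. Indeed, in the split case the correct degree is $h_K\cdot\varphi(p^k)$ with no halving: for $k=2$ and $p\geq 5$ split this is exactly Theorem 6.2 of \cite{BC2}, as quoted in the remark in Section 3 of this paper, and that number is even. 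The halving you invoke (descending to a real model so that the Weber value $\mathfrak{h}(P)$ is totally real) requires complex conjugation to stabilize the relevant ray class field; for split $p$, conjugation interchanges $\mathfrak{p}$ and $\bar{\mathfrak{p}}$, so the residue field of $[E,P]$ is forced to have even degree. The hypothesis $p\equiv 3\pmod 4$ does enter for exactly the reason you sense, but through the ramified prime of $\Q(\sqrt{-p})$, not through a split prime of an auxiliary imaginary quadratic field.

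The paper's proof instead takes CM by (an order in) $\Q(\sqrt{-p})$ and cites \cite[Thm.~2.6]{BP}, which directly produces a CM point $x\in X_1(p^n)$ of \emph{odd} degree $h_{\Q(\sqrt{-p})}\cdot\frac{p-1}{2}\,p^{\delta}$ with $\delta=\lfloor 3n/2\rfloor-1$ for $p>3$ (and a small adjustment for $p=3$). Note the exponent grows like $3n/2$, not like $n-1$ as in your formula: a point of order $p^n$ corresponds to level $\mathfrak{p}^{2n}$ when $\mathfrak{p}^2=(p)$, which inflates the degree. The sporadicity step is then as you describe --- Abramovich's bound plus \cite[Prop.~2]{frey} --- and still succeeds because $p^{\lfloor 3n/2\rfloor}$ is eventually dominated by the gonality's growth of order $p^{2n}$; concretely the paper needs $h_{\Q(\sqrt{-p})}<\frac{7}{1600}p^{2n-\lfloor 3n/2\rfloor}$ for $n$ large. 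Your closing claim that varying $q$ yields infinitely many isogeny classes collapses with the construction; the paper's Remark \ref{remark4.2} instead obtains infinitely many isogeny classes by varying $p$ itself, since distinct $p\equiv 3\pmod 4$ force CM by distinct fields $\Q(\sqrt{-p})$.
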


\begin{remark}
We note that by work of Aoki \cite[Cor. 9.4]{aoki95}, there is no CM point of odd degree on $X_1(p^k)$ if $p$ is a prime with $p \equiv 1 \pmod{4}$.
\end{remark}

\begin{proof}
For a prime $p \equiv 3 \pmod{4}$ and $n \in \Z^+$ we define $\delta$ as follows:
\[ \delta \coloneqq
\begin{cases}
\lfloor 3n/2 \rfloor-1, \, \, \, p>3,\\
0, \, \, \, p=3 \text{ and } n=1, \\
\lfloor 3n/2 \rfloor-2, \, \, \, p=3 \text{ and } n \geq 2.
\end{cases}
\]
By \cite[Thm 2.6]{BP} there exists a CM point $x \in X_1(p^n)$ of odd degree
\[
\deg(x)=h_{\Q(\sqrt{-p})}\cdot \frac{p-1}{2} p^{\delta},
\]
where $h_{\Q(\sqrt{-p})}$ denotes the class number of $\Q(\sqrt{-p})$. For $n$ sufficiently large, we will have
\[
h_{\Q(\sqrt{-p})} < \frac{7}{1600} p^{2n-\lfloor 3n/2 \rfloor}.
\]
In particular, this forces $n>1$ if $p=3$. Thus by Theorem 0.1 of \cite{abramovich},
\[
\deg(x) < \frac{7}{1600}[\PSL_2(\Z) : \Gamma _1(p^n)]  \leq \frac{1}{2}\gon_{\mathbb{Q}}(X_1(p^n)),
\]
and $x$ is sporadic by \cite[Prop. 2]{frey}.
\end{proof}

\begin{remark} \label{remark4.2}
Suppose $p \equiv 3 \pmod{4}$ is prime. By \cite[Cor. 9.4]{aoki95}, a CM elliptic curve with a point of order $p^n$ {over an odd degree number field} has CM by an order in $\Q(\sqrt{-p})$. Thus for distinct primes $p \equiv 3 \pmod{4}$ the sporadic points of Proposition \ref{prop:CMSporadicPoints} necessarily come from CM elliptic curves that are not isomorphic over $\overline{\Q}$. Said another way, if $\mathcal{I}_{\text{odd}}$ is the set of all sporadic points of odd degree on all curves $X_1(N)$ for $N \in \Z^+$, then $j(\mathcal{I}_{\text{odd}})$ contains infinitely many CM $j$-invariants. Moreover, since the endomorphism algebra $\End(E) \otimes \Q$ is an isogeny invariant, these CM elliptic curves lie in infinitely many distinct $\overline{\Q}$-isogeny classes.
\end{remark}

\section{Beyond Powers of a Single Prime}
In fact, studying points of odd degree on $X_1(p^k)$ associated to non-CM $\Q$-curves is not far from the case of such points of odd degree on $X_1(n)$. Since any such $\Q$-curve $E$ is isogenous to an elliptic curve $E'$ with rational $j$-invariant, it follows that $E'$ corresponds to a point of odd degree on $X_0(p)$ for any odd prime $p$ dividing $n$. As shown in \cite[Proposition 3.3]{CremonaNajmanQCurve}, this generally implies that $E'$ must in fact correspond to a point in $X_0(p)(\Q)$. Thus the support of $n$ is constrained by the possible rational isogenies of elliptic curves over $\Q$, as in Theorem \ref{IsogClassification}. This can be used to prove the following result, which generalizes \cite[Theorem 3]{OddDeg}.

\begin{prop} \label{prop:combine2}
Let $x=[E,P] \in X_1(N)$ be a point of odd degree, where $E$ is a non-CM $\Q$-curve. Then $N=2^ap^k$ for $p \in \{3,5,7,11,13\}$ unless $E$ is in the isogeny class of an elliptic curve over $\Q$ with a rational cyclic $21$-isogeny. If $k>0$, then we obtain the following bounds on $a$:
\begin{enumerate}
\item If $p=3$, then $a \leq 2$.
\item If $p=5$, then $a \leq 1$.
\item If $p=7$, then $a \leq 2$.
\item If $p=11$, then $a \leq 1$.
\item If $p=13$, then $a \leq 1$.
\end{enumerate}

\end{prop}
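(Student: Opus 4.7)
The strategy is to exploit \Cref{thm:CN} to transfer the problem of torsion on a non-CM $\Q$-curve over an odd-degree field to the rational-isogeny structure of an associated curve $E''/\Q$, then apply the classification of \Cref{IsogClassification} together with the descent result of \Cref{prop:CN}. First, applying \Cref{Prop:div} to each prime power $p^k \mid N$ immediately restricts $p \in \{2,3,5,7,11,13\}$, as primes $17$ and $37$ are precluded by the divisibility conclusions of that proposition. Next, \Cref{thm:CN} yields an $F$-isogeny $E \to E'$ with $j(E') \in \Q$ (where $F = \Q(x)$); picking a $\Q$-model $E''$ of this $j$-invariant, we have $E' \cong E''$ over an extension $L/F$ of degree at most $2$.

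For each odd prime $p \mid N$, the cyclic subgroup of order $p^{\ord_p N}$ within $\langle P \rangle$ gives an $F$-rational $p$-isogeny on $E$; transporting through $E \sim_F E'$ and the twist $E' \cong_L E''$ yields an $L$-rational $p$-isogeny on $E''$. Since $[L:F] \leq 2$ and $[F:\Q]$ is odd, \Cref{prop:CN} applies and forces a $\Q$-rational $p$-isogeny on $E''$, barring the exception $j(E'') = 2268945/128$ (which itself lies in the $21$-isogeny class). If at least two distinct odd primes $p, q$ divide $N$, these combine, by the independence of $\ell$-isogenies for distinct primes $\ell$, to a $\Q$-rational cyclic $pq$-isogeny on $E''$. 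By \Cref{IsogClassification}, the only composite odd squarefree values arising as rational cyclic isogeny degrees are $15 = 3 \cdot 5$ and $21 = 3 \cdot 7$. The $15$-isogeny case is eliminated via a direct analysis of the four known $j$-invariants with a rational $15$-isogeny together with their mod-$15$ Galois images, showing that no such curve can acquire a point of order $15$ over an odd-degree extension; the remaining possibility is $\{p,q\} = \{3,7\}$, placing $E$ in the $\overline{\Q}$-isogeny class of a curve with a rational cyclic $21$-isogeny.

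It remains to bound $a$ when $N = 2^a p^k$ with $p$ odd and $k \geq 1$. The bounds mirror those in \cite[Theorem 3]{OddDeg}: combining the just-established $\Q$-rational $p$-isogeny on $E''$ with the $F$-rationality of the $2^a$-part of $\langle P \rangle$, we either promote to a $\Q$-rational cyclic $2^a p$-isogeny on $E''$ and apply \Cref{IsogClassification} directly, or, when the $2$-part fails to descend cleanly to $\Q$, analyze the induced odd-degree point on $X_0(2^a p)$ and rule out sufficiently large $a$ using known classifications of low-degree points on these small modular curves and the $2$-adic index bound in \Cref{Prop:index}.

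The main obstacle is the careful bookkeeping of cyclic structure through the composite of the $F$-isogeny $E \to E'$ and the at-most-quadratic twist $E' \cong E''$, together with the \emph{ad hoc} analyses needed to eliminate the $15$-isogeny case (which \Cref{IsogClassification} alone does not rule out) and to handle the exceptional $j$-invariant $2268945/128$ where \Cref{prop:CN} does not immediately provide the desired descent. The bounds on $a$ for the primes $p \in \{7, 11, 13\}$ in particular require going beyond the crude application of \Cref{IsogClassification} to $2^a p$, since in those regimes it is not guaranteed that the $2^a$-part of the cyclic subgroup descends to a $\Q$-rational isogeny on $E''$.
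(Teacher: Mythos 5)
Your first half essentially tracks the paper's own route (Proposition~\ref{Prop:div} for the support of $N$, Theorem~\ref{thm:CN} plus Proposition~\ref{prop:CN} to descend odd $p$-isogenies to a rational model $E''$, and Theorem~\ref{IsogClassification} to reduce composite odd parts to $15$ or $21$), but two details are wrong or missing. Your parenthetical that $j(E'')=2268945/128=3^3\cdot5\cdot7^5/2^7$ ``lies in the $21$-isogeny class'' is false: this curve has \emph{no} rational isogenies over $\Q$, so it is not $\overline{\Q}$-isogenous to any curve with a rational cyclic $21$-isogeny (an isogeny between two rational $j$-invariants would be visible over $\Q$ by Lemma~\ref{lem:IsogenyResidueField}). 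The correct disposition, as in the paper, is that this exceptional $j$-invariant gains no $p$-isogeny for odd $p\neq 7$ over odd-degree fields, forcing $N=2^a7^k$ --- part of the main conclusion, not the ``unless'' clause. For the $15$-case, your proposed mod-$15$ image analysis omits the transfer step: the point of order $15$ lives on $E$, not on the curve carrying the rational $15$-isogeny, so you need Corollary~\ref{cor:IsogenyCor} to produce a point of order $15$ on $E'$ over a field of \emph{odd} degree (the paper then concludes by citing Proposition 15 of \cite{OddDeg}). Relatedly, routing the $p$-isogeny through $L$ (which has even degree) before invoking Proposition~\ref{prop:CN} is backwards, since that proposition only constrains isogenies over odd-degree fields; the fix is that Galois-stability of cyclic subgroups is twist-invariant, so $E''$ already has the $p$-isogeny over $F$ itself.

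The genuine gap is in the bounds on $a$, where your sketch is missing the engine of the argument. Since $E''$ has a point of order $2$ over $\Q$ or over a cubic field $K$, the image $\rho_{E'',2^{\infty}}(\Gal_\Q)$ (resp.\ $\rho_{E'',2^{\infty}}(\Gal_K)$) is a pro-$2$ group, and because $[F:\Q]$ is odd it equals $\rho_{E'',2^{\infty}}(\Gal_F)$; hence every $2$-power isogeny of $E''$ over $F$ is defined over $\Q$ (resp.\ $K$). Moreover, the $F$-isogeny $E\to E''$ can degrade the cyclic $2^a$-structure, which is why the paper first proves Lemma~\ref{lem:isog}, guaranteeing for $a\geq 3$ an $8$-isogeny or a $4$-isogeny independent from a $2$-isogeny. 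When these land over $\Q$, Theorem~\ref{IsogClassification} applied to $8p$ or $4p$ finishes; but when they land only over the cubic field $K$, your fallback --- ``known classifications of low-degree points on $X_0(2^ap)$'' --- fails concretely: such curves have abundant odd-degree points (e.g.\ $X_0(24)$ has genus $1$ with a rational point, hence infinitely many cubic points), so an odd-degree point on $X_0(2^ap)$ yields no contradiction. The paper instead converts the $K$-rational $4$-isogeny into a rational point on the Rouse--Zureick-Brown curve $X20$, computes the rational points on fiber products of $X20$ with $X_0(p)$ via \cite{DanielsGJ20}, and eliminates the handful of surviving $j$-invariants by division-polynomial computations; the full-$2$-torsion-over-$K$ case genuinely survives for $p\in\{3,7\}$ with $a\leq 2$ and is exactly what feeds Corollary~\ref{cor:5.3}, so no uniform ruling-out is possible. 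Finally, the $2$-adic index bound of Proposition~\ref{Prop:index} cannot substitute for any of this: it only yields $k\leq 4$ for the prime $2$ in Proposition~\ref{Prop:div} and distinguishes nothing at the level of $a\leq 2$ versus $a\leq 1$.
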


After establishing a preliminary lemma in $\S6.1$, we prove Proposition \ref{prop:combine2} in $\S6.2$. From the proof of Proposition \ref{prop:combine2}, we also deduce the following:

\begin{cor} \label{cor:5.3}
Let $x \in X_1(4 \cdot p^k)$ be a point of odd degree associated to a non-CM $\Q$-curve $E$, where $k>0$.
Then:
\begin{enumerate}
\item If $p=3$ and $E$ is \emph{not} isogenous to $E'$ with $j(E') \in \{3^2\cdot 23^3/2^6,-3^3\cdot 11^3/2^2\}$, then $E$ is $\overline{\Q}$-isogenous to an elliptic curve $E''/\Q$ with a rational cyclic $6$-isogeny or with a rational cyclic $3$-isogeny and full $2$-torsion (but no $4$-isogeny) in a cubic extension.
\item If $p=7$ and $E$ is \emph{not} isogenous to $E'$ with $j(E') \in \{-3^3 \cdot 13 \cdot 479^3/2^{14},3^3 \cdot 13/2^2\}$, then $E$ is $\overline{\Q}$-isogenous to an elliptic curve $E''/\Q$ with a rational cyclic $7$-isogeny and full $2$-torsion (but no $4$-isogeny) in a cubic extension.
\end{enumerate}
\end{cor}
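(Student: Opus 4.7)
The plan is to specialize the argument that proves Proposition \ref{prop:combine2} to the case $N = 4p^k$. By Theorem \ref{thm:CN}, since $E$ is a non-CM $\Q$-curve over a number field $F$ of odd degree, $E$ is $F$-isogenous to an elliptic curve $E'/F$ with $j(E') \in \Q$, and hence $\overline{\Q}$-isogenous to some $E''/\Q$. The $F$-rational point of order $4p^k$ on $E$ produces $F$-rational cyclic subgroups of orders $4$ and $p^k$, and by \cite[Lemma A.1]{CremonaNajmanQCurve} these transfer through a cyclic $F$-isogeny to $F$-rational cyclic subgroups of the same orders on $E'$, and hence on $E''$ after a twist. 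The task then is to descend this structure to $\Q$-rational isogenies on $E''$.

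For the odd part, I would invoke Proposition \ref{prop:CN}: when $p = 3$ this immediately yields a $\Q$-rational cyclic $3$-isogeny on $E''$, and when $p = 7$ the sole exception $j = 3^3 \cdot 5 \cdot 7^5/2^7$ lies in the $\overline{\Q}$-isogeny class of one of the excluded invariants from part (2), so $E''$ again carries a $\Q$-rational cyclic $7$-isogeny. The substance of the proof is the $2$-part: tracking how the $F$-rational cyclic subgroup of order $4$ on $E''$ descends to a $\Q$-rational $2$-power isogeny structure. I expect a dichotomy parallel to the one implicit in the proof of Proposition \ref{prop:combine2}: either this subgroup descends to a $\Q$-rational cyclic $2$-isogeny on $E''$ --- combining with the rational $p$-isogeny to give a $\Q$-rational cyclic $2p$-isogeny (case (a) for $p = 3$) --- or no rational cyclic $2$-isogeny exists but $E''$ instead acquires full $2$-torsion over a cubic extension without gaining a rational cyclic $4$-isogeny, yielding case (b). The excluded $j$-invariants in the hypothesis are then exactly the residual exceptions where neither branch holds, corresponding to elliptic curves over $\Q$ whose $2$- and $p$-power isogeny configurations are more intricate than the generic cases.

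The main obstacle will be a finite case analysis to identify these exceptional $j$-invariants explicitly. One would enumerate non-CM elliptic curves $E''/\Q$ carrying a rational cyclic $p$-isogeny together with a cyclic $4$-subgroup rational over some cubic extension, and check which fail to have either a rational cyclic $2$-isogeny or the desired $2$-torsion structure in a cubic extension. By Theorem \ref{IsogClassification} (bounding the possible rational cyclic isogeny degrees) combined with the $2$-adic Galois image constraints from Proposition \ref{Prop:index} and the tables of \cite{RouseDZB}, and also using the transfer statement of Proposition \ref{prop:isog_rep} to connect the structure on $E$ to that on $E''$, this enumeration is finite and should produce exactly the listed $j$-invariants: $\{3^2 \cdot 23^3/2^6,\, -3^3 \cdot 11^3/2^2\}$ for $p = 3$ and $\{-3^3 \cdot 13 \cdot 479^3/2^{14},\, 3^3 \cdot 13/2^2\}$ for $p = 7$, confirming that the hypothesis excludes precisely the classes where the dichotomy breaks down.
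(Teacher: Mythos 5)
Your overall plan---specializing the proof of Proposition \ref{prop:combine2} to $N=4p^k$---is exactly how the paper obtains this corollary (it is stated as a byproduct of that proof), and your final dichotomy (a rational $2$-isogeny combining with the $p$-isogeny, versus full $2$-torsion over a cubic field with no $4$-isogeny) matches the paper's case split on whether $E''$ has a $2$-torsion point over $\Q$ or only over a cubic field $K$. But two of your concrete claims are false. First, cyclic subgroups do \emph{not} transfer through an $F$-isogeny to cyclic subgroups of the same order, and \cite[Lemma A.1]{CremonaNajmanQCurve} (uniqueness of a cyclic isogeny up to sign) says nothing of the kind; the correct transfer statement is Lemma \ref{lem:isog}, whose conclusion for a cyclic $4$-subgroup is precisely the weaker alternative ``a $4$-isogeny \emph{or} two independent $2$-isogenies,'' and that weaker alternative is the reason case (b) of the corollary exists at all. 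Second, the $p=7$ exception $j=3^3\cdot 5\cdot 7^5/2^7$ is \emph{not} in the $\overline{\Q}$-isogeny class of either excluded invariant: a curve with this $j$-invariant has no rational cyclic isogenies over $\Q$, and by Lemma \ref{lem:IsogenyResidueField} a cyclic isogeny between non-CM curves with rational $j$-invariants has residue field $\Q(j(E),j(E'))=\Q$, so no such geometric isogeny can exist. The paper instead eliminates this $j$-invariant by direct computation: $\psi_{E''}(4)/\psi_{E''}(2)$ is irreducible of degree $6$, so there is no $4$-isogeny over any odd-degree field, and $\im \rho_{E'',2}\simeq S_3$, so full $2$-torsion over a cubic field is impossible.

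Beyond these errors, the two load-bearing steps are left as placeholders. (i) The descent from $F$ to $\Q$ or $K$ has a specific mechanism you never identify: once $E''$ has a $2$-torsion point over $\Q$ (resp.\ over the cubic $K$), the index of $\rho_{E'',2^{\infty}}(\Gal_F)$ in $\rho_{E'',2^{\infty}}(\Gal_\Q)$ (resp.\ in $\rho_{E'',2^{\infty}}(\Gal_K)$) is simultaneously a power of $2$ and a divisor of an odd degree, forcing equality; this is what moves the Lemma \ref{lem:isog} structure from $F$ down to $\Q$ or $K$, and it is also what rules out the rational-$2$-torsion branch when $p=7$ (it would make $E''$ $\Q$-isogenous to a curve with a rational cyclic $28$-isogeny, contradicting Theorem \ref{IsogClassification})---a point your sketch silently needs for part (2). (ii) Your claimed finiteness of the exceptional enumeration does not follow from Theorem \ref{IsogClassification} plus Proposition \ref{Prop:index}: a curve with a $4$-isogeny over a cubic field corresponds to a rational point on the Rouse--Zureick-Brown curve $X20$, and neither $X20$ nor $X_0(3)$ has finitely many rational points on its own. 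The finiteness lives entirely in the determination of the rational points on the fiber products of $X20$ with $X_0(3)$ and with $X_0(7)$, carried out in \cite{DanielsGJ20}, and it is those computations that produce the four excluded $j$-invariants; no combination of the isogeny classification and index bounds substitutes for them. (Also, Proposition \ref{prop:isog_rep} is explicitly not used in the paper's argument, so it cannot be the transfer tool here.)
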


\subsection{A Preliminary Result}
\begin{lem}\label{lem:isog}
Let $E$ and $E'$ be non-CM elliptic curves which are $F$-isogenous and suppose that $E$ has a $p^n $-isogeny over $F$. Then $E'$ has either $2$ independent $F$-isogenies of degrees $p^{n-t}$ and $p^t$ for some $t$, or $E'$ has a $p^{a}$-isogeny over $F$, where $a\geq \min \{n,1+\frac n 2\}$.
\end{lem}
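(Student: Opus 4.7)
The plan is to produce the required cyclic subgroups of $E'$ from the given cyclic subgroup $C \subseteq E$ by transport along $\psi$ and its dual $\hat\psi$. By Lemma A.1 of \cite{CremonaNajmanQCurve}, we may assume $\psi: E \to E'$ is cyclic; since the prime-to-$p$ part of $\psi$ is an isomorphism on the $p^\infty$-torsion of $E'$ (and hence preserves every cyclic $p^a$-isogeny of $E'$), I would absorb it to reduce to the case $\deg \psi = p^m$. Writing $K = \ker \psi \cong \Z/p^m$ and $t := \ord_p(|C \cap K|)$, we have $0 \leq t \leq \min(m, n)$.

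The central object is the $F$-rational subgroup $N := \hat\psi^{-1}(C) \subseteq E'$, of order $p^{n+m}$, fitting in an exact sequence $0 \to \ker\hat\psi \to N \to C \to 0$. The main technical step is the structural claim
\[
N \;\cong\; \Z/p^t \oplus \Z/p^{n+m-t}.
\]
I would prove this either by a direct Tate-module calculation, or more conceptually via the $p$-adic Bruhat--Tits tree of curves isogenous to $E$: the paths from $E$ to $E'$ and from $E$ to $E/C$ share an initial segment of length $t$, so $d(E', E/C) = n+m-2t$; this tree distance equals the difference of elementary divisors of $N$, and combined with $|N| = p^{n+m}$ pins down the structure.

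Given the structure of $N$, a short case analysis finishes the proof. If $t = m$ (equivalently, $K \subseteq C$, which forces $m \leq n$), then $C/K$ and $\ker\hat\psi$ are $F$-rational cyclic subgroups of $E'$ of orders $p^{n-m}$ and $p^m$; using $C \cap E[p^m] = K$ one checks they intersect trivially, giving the first alternative. Otherwise $t < m$, and $[p^t]\, N \subseteq E'$ is $F$-rational and cyclic of order $p^{n+m-2t}$; combined with $\hat\psi$ (degree $p^m$), we obtain an $F$-rational cyclic $p^a$-isogeny of $E'$ with $a = \max(m, n+m-2t)$. One verifies $a \geq \min\{n, 1+n/2\}$: if $n \leq 2m-2$ then $m \geq 1+n/2$, while if $n \geq 2m-2$ then, since $t \leq m-1$, one has $n+m-2t \geq n-m+2 \geq 1+n/2$. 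The main obstacle is the structural claim about $N$; once that is in hand, the remaining bookkeeping is routine.
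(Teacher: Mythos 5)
Your proposal is correct, and it takes a genuinely different (more structural) route than the paper. The paper argues by an elementary case analysis on how the kernel $C$ of the $p^n$-isogeny sits relative to the kernel $K$ of the connecting isogeny — trivial intersection, one contained in the other, or neither — with the last case handled by factoring both isogenies through a common $p$-power isogeny of maximal degree and composing one factor with a dual; no Tate modules or trees appear. You instead encode all cases at once in the pullback $N=\hat\psi^{-1}(C)$ and its elementary divisors. I verified your structural claim $N\cong \Z/p^t\oplus\Z/p^{\,n+m-t}$ with $t=\ord_p(|C\cap K|)$: both the direct Tate-module computation and the tree argument go through (the shared initial segment of the two paths has length exactly $t$ because $C\cap K=C[p^t]=K[p^t]$; the composite $E'\xrightarrow{\hat\psi}E\to E/C$ has kernel $N$, so $c_1+c_2=n+m$ and $c_2-c_1=n+m-2t$ force $c_1=t$). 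Your dichotomy $t=m$ versus $t<m$ then recovers precisely the paper's outputs: in your notation, $[p^t]N$ is exactly the kernel of the paper's composite cyclic $p^{(m-t)+(n-t)}$-isogeny, your $\hat\psi$ is their $\hat f$, the $t=m$ case (with the verification via $C\cap E[p^m]=K$, which is correct) is their ``two independent isogenies'' case, and your two-branch check of $\max(m,\,n+m-2t)\geq\min\{n,1+\tfrac n2\}$ using $t\leq m-1$ matches their inequality. What your approach buys is a sharper intermediate statement (the exact group structure of $N$, which pins down both alternatives simultaneously) and a uniform two-case argument subsuming the paper's four; what the paper's buys is elementarity. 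Two small points if you write this up: the structural claim is currently only sketched and should be carried out in full, and the degenerate situations ($C=K$, giving $t=m=n$, and $m=0$ after the prime-to-$p$ reduction) should be observed to land harmlessly in one of the two alternatives.
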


\begin{proof}
Let $f:E\rightarrow E'$ be a cyclic $F$-isogeny (which is unique up to sign). If $\deg f$ is coprime to $p$, then $E'$ has a $p^n$-isogeny defined over $F$ and we are done. If $\deg f= p^t\cdot b$ where $(b,p)=1$, then there exists a curve $E_0$ which is $b$-isogenous to $E'$ and its $p$-adic representation will be the same as that of $E'$, so will have the same degrees of $p$-power isogenies as $E'$. Hence we can suppose without loss of generality that $\deg f= p^t$, as otherwise we work with $E_0$ instead of $E'$.

Let $f_1:E\rightarrow E_1$ be the $p^n$-isogeny that $E$ has by our assumptions. If $\ker f_1\cap \ker f=\{O\}$, then $f_1\circ \hat f$ is a $p^{t+n}$-isogeny over $F$ and we are done. If $\ker f_1\supseteq \ker f$ then {$f_1$} factors as $f_1=f_2\circ f$, where $f_2:E' \rightarrow E_1$ is a $p^{n-t}$-isogeny over $F$ which is independent with the $p^t$-isogeny $\hat f:E'\rightarrow E$. If $\ker f\supseteq \ker f_1$, then we have $t\geq n$ and the isogeny $p^t$-isogeny $\hat f:E'\rightarrow E$ again completes this case.

Suppose finally that $\ker f_1\cap \ker f\neq \{O\}$ and that neither of $\ker f_1$ and $\ker f$ is contained in the other (this implies $n,t\geq 2$). Then there exists an elliptic curve $E_2$ and a $p^m$-isogeny $f_2:E\rightarrow E_2$ defined over $F$ such that both $f$ and $f_1$ factor through $f_2$; we choose $E_2$ and $m$ such that $m$ is the largest integer satisfying this condition. Let $f=f_3 \circ f_2$ and $f_1=f_4\circ f_2$. Now we see that $\hat f:E'\rightarrow E$ is a $p^{t}$-isogeny over $F$ and $ f_4 \circ \hat{f_3}:E'\rightarrow E_1$ is an $p^{(t-m)+(n-m)}$-isogeny over $F$. We have $1\leq m<t$ and $m<n$, so if $m\geq \frac n 2$, then we have $t=(t-m)+m\geq 1+\frac n 2$ and if $m<\frac n 2$, then $(t-m)+(n-m) \geq 1 +\frac n 2$.
\end{proof}

\subsection{Proof of Proposition \ref{prop:combine2}}

Let $F \coloneqq \Q(x)$, and fix a model of $E/F$ with $P \in E(F)$. By Proposition \ref{Prop:div}, $\Supp(N) \subseteq \{2,3,5,7,11,13\}$. By \Cref{thm:CN}, there exists an $F$-rational isogeny $\varphi: E \rightarrow E'$ such that $j(E') \in \Q$. Let $p \mid N$ be prime. Since $E$ has a $p$-isogeny defined over $F$, so does $E'$ by Proposition 3.2 of \cite{CremonaNajmanQCurve}. If $p$ is odd, then since $F$ has odd degree, it follows by Proposition 3.3 in \cite{CremonaNajmanQCurve} that any $E''/\Q$ with $j(E'')=j(E')$ has a rational cyclic $p$-isogeny or else $p=7$ and $j(E')=3^3\cdot5\cdot7^5/2^7$.

For now, suppose $j(E')\neq 3^3\cdot5\cdot7^5/2^7$. If distinct odd primes $p,q$ divide $N$, then $E''/\Q$ has a rational cyclic $pq$-isogeny. By the classification of rational isogenies over $\Q$ (see Theorem \ref{IsogClassification}), this cannot happen unless $E''$ has a rational cyclic 15- or 21-isogeny. By the theorem statement, it suffices to consider the case where $E''$ has a rational cyclic 15-isogeny. For the sake of contradiction assume $N=2^a3^b5^c$ for $b,c>0$. By \Cref{cor:IsogenyCor}, there is an extension $F_1/F$ of degree dividing 3 such that $E'(F_1)$ has a point of order 3. Similarly, there is an extension $F_2/F$ of degree dividing 5 such that $E'(F_2)$ has a point of order 5. It follows that $E'$ has a point of order 15 over $F_1F_2$, which has odd degree over $\Q$. This contradicts Proposition 15 in \cite{OddDeg}. If $j(E')= 3^3\cdot5\cdot7^5/2^7$, then $E''$ has no rational isogenies over $\Q$, so it does not gain a $p$-isogeny, for odd primes $p\neq 7$, over any odd degree number field; the first part of the result follows.

Now, suppose $b>0$. Suppose first that $E''$ has a $2$-torsion point over $\Q$, which implies by Theorem \ref{IsogClassification} that $p \leq 7$. Then any subgroup of $\rho_{E'',2^\infty}(\Gal_\Q)$ will be of order which is a power of $2$, from which we conclude that $\rho_{E'',2^\infty}(\Gal_\Q)=\rho_{E'',2^\infty}(\Gal_F)$. Thus all the $2$-power isogenies of $E''$ which are defined over $F$ are already defined over $\Q$. So, by \Cref{lem:isog} it follows that if $a\geq 3$, $E''$ has either an $8$-isogeny or a $4$-isogeny which is independent to a $2$-isogeny. But now it is not possible that $b>0$, as now it would follow that $E''$ has an $8p$-isogeny over $\Q$, or is isogenous over $\Q$ to an elliptic curve with an $8p$-isogeny over $\Q$. If $a=2$, then Lemma \ref{lem:isog} implies $E''/\Q$ has either a 4-isogeny or independent 2-isogenies, and thus $E''$ is isogenous over $\Q$ to an elliptic curve with a rational cyclic $4p$-isogeny. This cannot happen if $p=5,7$.

The case when $E''$ has no $2$-torsion point over $\Q$ remains.  Hence $E''$ gains a point of order 2 over a cubic field $K$. Since $\rho_{E'',2^\infty}(\Gal_K)=\rho_{E'',2^\infty}(\Gal_F)$, if follows that if $E''$ has a $2^k$-isogeny over $F$ then this same isogeny is defined over $K$. By \Cref{lem:isog} if $a >1$, then in particular $E''$ has either a $K$-rational $4$-isogeny or full $2$-torsion over $K$. If $a \geq 3$, then by Lemma \ref{lem:isog}, $E''$ has a 4-isogeny over $K$.

If $E''/\Q$ has a cyclic $4$-isogeny over $K$, it follows that $E''$ has a point of order 4 over $K$ or over a quadratic extension of $K$. By the classification of 2-adic images due to Rouse and Zureick-Brown \cite{RouseDZB} (see, in particular, the data file 2primary\textunderscore Ss.txt associated to \cite{GJLR}), this means $E''$ corresponds to a rational point on the modular curve {\href{https://users.wfu.edu/rouseja/2adic/X20.html}{$X20$}}. Recall that if $j(E'') \neq 3^3\cdot5\cdot7^5/2^7$, then $E''$ also has a rational cyclic $p$-isogeny.
\begin{enumerate}
\item If $E''$ corresponds to a rational point on the fiber product of $X20$ and $X_0(3)$, then $j(E'')=3^2\cdot 23^3/2^6$ or $-3^3\cdot 11^3/2^2$ by \cite[Prop. 6]{DanielsGJ20}. An elliptic curve with either $j$-invariant gives points on $X_0(8)$ of even degree only, so $\ord_2(\deg(\varphi)) \leq 2$. Suppose for the sake of contradiction that $E(F)$ has a point of order 8. Then some elliptic curve 2-isogenous to $E''$ must correspond to a point of odd degree on $X_1(4)$. A Magma computation shows no such curve exists; see the research website of either author.
\item The fiber product of {\href{https://users.wfu.edu/rouseja/2adic/X20.html}{$X20$}} and $X_0(5)$ has only cusps by \cite[Prop. 6k]{DanielsGJ20}.
\item If $E''$ corresponds to a rational point on the fiber product of {\href{https://users.wfu.edu/rouseja/2adic/X20.html}{$X20$}} and $X_0(7)$, then $j(E'') =-3^3 \cdot 13 \cdot 479^3/2^{14}$ or $3^3 \cdot 13/2^2$ by \cite[Prop. 6s]{DanielsGJ20}. As in the fiber product of {\href{https://users.wfu.edu/rouseja/2adic/X20.html}{$X20$}} and $X_0(3)$, these two elliptic curves give no points of odd degree on $X_0(8)$, and no elliptic curve 2-isogenous to $E''$ gives a point on $X_1(4)$ of odd degree. Thus $a \leq 2$. In this case, it is possible that $E''/\Q$ does not have a rational cyclic 7-isogeny, but then $j(E'')=3^3\cdot5\cdot7^5/2^7$. Suppose $E''$ is an elliptic curve with $j(E'')=3^3\cdot5\cdot7^5/2^7$ and let $\psi_{E''}(n)$ be its $n$-th division polynomial. We check that $\psi_{E''}(4)/\psi_{E''}(2)$, the polynomial whose roots are $x$-coordinates of points of order $4$ on $E''$ and hence generate the fields over which $E''$ has a $4$-isogeny, is an irreducible degree $6$ polynomial, so we conclude that $E''$ has no $4$-isogenies over any odd degree number field.

\item If $E''$ corresponds to a rational point on $X_0(11)$, then $j(E'')=-11\cdot 131^3$ or $-11^2$ (see, for example, \cite[Table 4]{LRAnn}). As before, we may choose a specific elliptic curve over $\Q$ with each $j$-invariant and use division polynomials to show $E''$ does not have a 4-isogeny over any odd degree number field.
\item The fiber product of {\href{https://users.wfu.edu/rouseja/2adic/X3.html}{$X3$}} and $X_0(13)$ has only cuspidal rational points by \cite[Table 8]{DanielsGJ}, and thus so does the fiber product of {\href{https://users.wfu.edu/rouseja/2adic/X20.html}{$X20$}} and $X_0(13)$ since {\href{https://users.wfu.edu/rouseja/2adic/X20.html}{$X20$}} covers {\href{https://users.wfu.edu/rouseja/2adic/X3.html}{$X3$}}.
\end{enumerate}

Suppose now $E''$ has full $2$-torsion over $K$ but no $4$-isogeny; in particular, as noted above, this can happen only if $a \leq 2$. Then $\rho_{E'',2}(\Gal_\Q)$ is cyclic and
we see that $p\neq 5,13$ by \cite[Tables 8 \& 11]{DanielsGJ} and $p\neq 11$ by checking that the curves with $11$-isogenies over $\Q$ (i.e $j(E'') \in \{-11\cdot 131^3, -11^2\}$) have surjective mod 2 representations. Hence the only remaining possibilities are $p=3,7$.  We see that $j(E'')= 3^3\cdot5\cdot7^5/2^7$ does not appear in this case as for $E$ with this $j$-invariant $\im \rho_{E,2}\simeq S_3$.

\section{Proof of Theorem \ref{thm:FiniteIsogenyClasses}}
In this section, we prove our main result: If $x \in X_1(N)$ is a point of odd degree corresponding to a non-CM $\Q$-curve $E$ and $\deg(x) < \gon_{\Q}(X_1(N))$, then $E$ belongs to the $\overline{\Q}$-isogeny class of an elliptic curve with $j$-invariant $-140625/8$. By Proposition \ref{prop:combine2} and Theorem \ref{Thm:main}, it suffices to address the case where $N=2p^b$ or $4p^b$ for an odd prime $p$. If $p>3$, the general strategy is to show that the existence of such a low degree point would force a point on $X_1(2p)$ or $X_1(4p)$ which is known not to exist. (See $\S7.1$.) This is inspired by the approach of \cite{BELOV} and \cite{OddDeg}, with the distinction being that the point on $X_1(2p)$ or $X_1(4p)$ is associated to an elliptic curve in the $\overline{\Q}$-isogeny class of $E$ rather than $E$ itself. The prime $p=3$, addressed in $\S7.2$, requires a different approach. There, we show such a low degree point on $X_1(2\cdot 3^k)$ or $X_1(4\cdot 3^k)$ would force the existence of an elliptic curve over $\Q$ with unexpected entanglement between its 2- and 27-torsion point fields. These elliptic curves correspond to rational points on one of 10 possible curves, all of which have genus at least 2. In $\S7.3$, we determine the elliptic curves which correspond to rational points on each of these 10 curves and show that none are $\overline{\Q}$-isogenous to a non-CM elliptic curve producing a point of odd degree on $X_1(N)$ with $\deg(x) < \gon_{\Q}(X_1(N))$.

\subsection{Points on $X_1(2^ap^k)$ for $p > 3$.}  In this section, we show if $p >3$ is prime, there are no sporadic points of odd degree on $X_1(2 p^k)$ associated to non-CM $\Q$-curves. We also show that there are no sporadic points of odd degree $X_1(4\cdot7^k)$ associated to a non-CM $\Q$-curve $E$.

\begin{lem}\label{lem:ReplaceIsogeny}
Let $p$ be prime. Suppose $E/F$ is a non-CM elliptic curve with $[E,C] \in X_0(p)(\Q(j(E)))$. If $Q\in E(F)$ is a point of order $p$, then there is a subfield of $F$ of degree dividing $\frac{p-1}{2}\cdot [\Q(j(E)):\Q]$ that is the residue field of a point on $X_1(p)$ associated to either $E$ or $E/C$.
\end{lem}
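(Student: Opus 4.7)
The plan is to split the argument based on whether $Q$ belongs to $C$ or not, and in each case to exhibit an explicit point on $X_1(p)$ of the required form whose residue field sits inside $F$ with the desired degree bound. Setting $L := \Q(j(E))$, we note that since $[E,C] \in X_0(p)(L)$ the subgroup $C$ is $\Gal_L$-stable and the quotient isogeny $\varphi \colon E \to E/C$ is defined over $L$ (and in particular over $F$). In both cases we will read off the residue field using Lemma \ref{lem:ResidueField} as $L(\mathfrak{h}(R))$ for an appropriate point $R$ of order $p$, so the task reduces to two things: bounding $[L(\mathfrak{h}(R)) : L]$ by $(p-1)/2$, and verifying $\mathfrak{h}(R) \in F$.

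For Case 1, namely $Q \in C$, the point $Q$ generates $C$, so the $\Gal_L$-action on $Q$ is through a character $\chi \colon \Gal_L \to (\Z/p\Z)^\times$. Since $E$ is non-CM we have $j(E) \notin \{0,1728\}$ and hence $\Aut(E) = \{\pm 1\}$, so $\mathfrak{h}_E(Q)$ is (up to a nonzero constant) the $x$-coordinate of $Q$ and its $\Gal_L$-orbit has size dividing $(p-1)/2$. On the other hand, $\mathfrak{h}_E(Q) \in F$ because $Q \in E(F)$. Taking $R = Q$, the point $[E,Q] \in X_1(p)$ then has residue field lying in $F$ and of degree over $\Q$ dividing $\tfrac{p-1}{2}\cdot[L:\Q]$.

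For Case 2, namely $Q \notin C$, the plan is to use $R := \varphi(Q)$ on $E/C$. Pick a generator $P$ of $C$ so that $\{P,Q\}$ is an $\F_p$-basis of $E[p]$. For any $\sigma \in \Gal_L$ one writes $\sigma(Q) = a_\sigma Q + b_\sigma P$ with $a_\sigma \in (\Z/p\Z)^\times$; since $\varphi(P) = O$, we get $\sigma(\varphi(Q)) = \varphi(\sigma(Q)) = a_\sigma \varphi(Q)$, so $\Gal_L$ acts on $\varphi(Q)$ by scalars and the $\Gal_L$-orbit of $\mathfrak{h}_{E/C}(\varphi(Q))$ again has size dividing $(p-1)/2$. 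Because $\varphi$ is $F$-rational and $Q \in E(F)$, we have $\varphi(Q) \in (E/C)(F)$, hence $\mathfrak{h}_{E/C}(\varphi(Q)) \in F$. The point $[E/C,\varphi(Q)] \in X_1(p)$ then provides the required subfield of $F$.

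The main subtlety will be to reconcile the two viewpoints used in each case: the Galois orbit analysis is cleanest on an $L$-model of $E$ (where $C$ is literally $L$-rational), while the rationality $\mathfrak{h}(\cdot) \in F$ is read off from the $F$-model. These views agree because, as noted in the remark following Lemma \ref{lem:ResidueField}, the Weber function value is invariant under isomorphism of elliptic curves, so both calculations reference the same element of $\overline{\Q}$. The non-CM hypothesis enters only through the need $\Aut(E) = \{\pm 1\}$, i.e.\ $j(E) \notin \{0,1728\}$, which is what lets us replace the bound $p-1$ on the scalar orbit by the desired $(p-1)/2$.
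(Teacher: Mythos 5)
Your proof is correct, and it succeeds by a route that is structurally the same as the paper's but self-contained where the paper cites an external result. The skeleton is identical: you split on whether $\langle Q\rangle=C$, and in the second case you use the very same point $[E/C,\varphi(Q)]\in X_1(p)$ as the paper; indeed, your observation that $\Gal_{\Q(j(E))}$ acts on $\varphi(Q)$ through scalars is exactly the paper's remark that $\hat{\varphi}(\varphi(Q))=[p]Q=O$ forces $\varphi(Q)$ to generate the rational subgroup $\ker(\hat{\varphi})$. The difference is that the paper then disposes of both cases by invoking \cite[Thm.~5.5]{BCS}, whereas you reprove that degree bound directly via the Weber function: since the scalar action on $\mathfrak{h}(R)$ factors through $(\Z/p\Z)^\times/\{\pm 1\}$, the $\Gal_{\Q(j(E))}$-orbit of $\mathfrak{h}(R)$ has size dividing $(p-1)/2$, which combined with Lemma \ref{lem:ResidueField} gives the bound. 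What your approach buys is independence from \cite{BCS} and an explicit view of where the factor $(p-1)/2$ and the non-CM hypothesis enter; what it costs is some care with models, which you handle correctly via the isomorphism-invariance of $\mathfrak{h}$. Two small points would tighten the write-up: (i) in Case 2 the residue field produced by Lemma \ref{lem:ResidueField} is $\Q(j(E/C),\mathfrak{h}(\varphi(Q)))$, whose base field is $\Q(j(E/C))$ rather than $\Q(j(E))$; since $j(E/C)\in\Q(j(E))$ by Lemma \ref{lem:IsogenyResidueField} and the hypothesis $[E,C]\in X_0(p)(\Q(j(E)))$, this residue field is a subfield of $\Q(j(E))(\mathfrak{h}(\varphi(Q)))$ and so its degree over $\Q$ divides $\tfrac{p-1}{2}\cdot[\Q(j(E)):\Q]$ as well, but this deserves a sentence; (ii) in Case 2 the relevant automorphism group is $\Aut(E/C)$, so you should note that $E/C$ is non-CM (being isogenous to the non-CM curve $E$), and the non-CM hypothesis is also what makes the field of definition of $\varphi$ model-independent, namely $\Q(\varphi)=\Q(j(E),j(E/C))$ by \cite[Cor.~A.5]{CremonaNajmanQCurve}, which is what justifies the reconciliation of the $L$-model Galois computation with the $F$-model rationality in your final paragraph.
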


\begin{proof}
If $\langle Q \rangle =C$, the result follows by \cite[Thm. 5.5]{BCS}, so suppose not. Let $\varphi: E \rightarrow E' \coloneqq E/C$, viewed as an isogeny of elliptic curves over $F$. Then $\varphi(Q) \in E'(F)$ is a point of order $p$, and since $\hat{\varphi}(\varphi(Q))=[p]Q=O$ this point generates $\ker(\hat{\varphi})$. Since $[E', \ker(\hat{\varphi})] \in X_0(p)(\Q(j(E))$, the residue field of $[E', \varphi(Q)] \in X_1(p)$ has degree dividing $\frac{p-1}{2}\cdot[\Q(j(E)):\Q]$ by \cite[Thm. 5.5]{BCS} and is contained in $F$, as desired.
\end{proof}

\begin{prop}
Suppose $x \in X_1(2p^k)$ is a point of odd degree corresponding to a non-CM $\Q$-curve, where $p \geq 5$ is prime. Then $\deg(x) \geq \gon_{\Q}(X_1(2p^k))$. In particular, $x$ is not sporadic.
\end{prop}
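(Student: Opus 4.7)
The plan is to leverage the isogeny structure of the non-CM $\Q$-curve $E$ to produce, from $x$, a low-degree point on $X_1(2p)$ associated to an elliptic curve $\tilde{E}$ in the $\overline{\Q}$-isogeny class of $E$, and then to rule out such a point using known constraints on torsion of elliptic curves over low-degree number fields.

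Let $F = \Q(x)$ and decompose the $2p^k$-torsion point in $E(F)$ as $T + Q$, with $T$ of order $2$ and $Q$ of order $p^k$. By Proposition \ref{prop:combine2} together with the assumption $p \geq 5$, we have $p \in \{5, 7, 11, 13\}$. By Theorem \ref{thm:CN}, $E$ is $F$-isogenous to an elliptic curve $E'$ with $j(E') \in \Q$, which we may take to be defined over $\Q$. Since $E(F)$ has a point of order $p$, Proposition 3.2 of \cite{CremonaNajmanQCurve} implies that $E'$ also has a $p$-isogeny over $F$, and Proposition 3.3 of \cite{CremonaNajmanQCurve} then forces $E'/\Q$ to have a rational cyclic $p$-isogeny, with the single exception of $p = 7$ and $j(E') = 3^3 \cdot 5 \cdot 7^5/2^7$; this exceptional case will be treated separately using the explicit $7$-adic image recorded in \cite[Lemma 21]{OddDeg}.

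After suitably factoring the isogeny $E \to E'$ to isolate its $p$-part, the natural application of Lemma \ref{lem:ReplaceIsogeny} to $E'$ and the image of $p^{k-1}Q$ yields a point on $X_1(p)$ associated to some curve $\tilde{E} \in \{E', E'/C'\}$ whose residue field lies inside $F$ and has degree dividing $(p-1)/2$. Transferring $T$ along the same isogeny chain (after isolating its prime-to-$2$ part) produces a $2$-torsion point of $\tilde{E}$ defined over a subfield of $F$ of degree dividing $3$, using that $[F:\Q]$ is odd so that the field of definition of this $2$-torsion point has odd degree over $\Q$ and is therefore at most cubic. Combining these two torsion structures on $\tilde{E}$ gives a point on $X_1(2p)$ associated to $\tilde{E}$ whose residue field has degree dividing $3(p-1)/2$.

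For $p \in \{11, 13\}$, the resulting bound ($15$ or $18$) is below the minimum odd degree at which $\Z/2p\Z$-torsion is known to occur on an elliptic curve, and the classifications of torsion on elliptic curves over number fields of small odd degree (see \cite{Deg3Class} and subsequent work) rule out such a point, producing the desired contradiction. For $p \in \{5, 7\}$, the $\Q$-gonalities $\gon_{\Q}(X_1(10)) = 1$ and $\gon_{\Q}(X_1(14)) = 2$ are small enough that the oddness of $\deg(x)$ directly gives $\deg(x) \geq \gon_{\Q}(X_1(2p))$ when $k = 1$; for $k \geq 2$, we combine the divisibility conditions of Proposition \ref{Prop:div} with upper bounds on $\gon_{\Q}(X_1(2p^k))$ obtained via Proposition \ref{prop:Degree} applied to the natural map $X_1(2p^k) \to X_1(2p^j)$ for a suitable intermediate $j$. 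The principal obstacle is the exceptional $p = 7$ case with $j(E') = 3^3 \cdot 5 \cdot 7^5/2^7$, together with carefully balancing the divisibility and gonality bounds for $p = 5$ when $k$ is large, where the margin is tightest.
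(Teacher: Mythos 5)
Your outline follows the same skeleton as the paper's proof (pass through the isogeny to a curve $E'$ with rational $j$-invariant, induce a low-degree point on $X_1(2p)$, contradict known torsion classifications), but it has genuine gaps at the steps that carry the argument. First, your claim that applying Lemma \ref{lem:ReplaceIsogeny} to ``the image of $p^{k-1}Q$'' yields a point on $X_1(p)$ with residue field \emph{inside} $F$ fails: the kernel of the isogeny $E \to E'$ may meet $\langle Q \rangle$, so the image of $p^{k-1}Q$ can be trivial, and ``isolating the $p$-part'' does not repair this. One only gets a point of order $p$ on $E'$ over an extension $F'/F$ of degree dividing $p$ (Corollary \ref{cor:IsogenyCor}(2)), which is what the paper uses. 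Second, your resulting degree bound $3(p-1)/2$ on the induced point of $X_1(2p)$ is too weak to invoke the classifications you cite: its odd divisors permit a degree-$9$ point of order $14$ when $p=7$ and a degree-$9$ point of order $26$ when $p=13$, and neither is excluded by \cite{Deg3Class} or \cite{Guzvic}. The paper avoids this by first showing that $\deg(x) < \gon_{\Q}(X_1(2p^k))$, oddness, the divisibility $d_0 \mid [F:\Q]$ from Proposition \ref{Prop:div}, and the gonality upper bounds from \cite{derickxVH} force $[F:\Q]=d_0$ \emph{exactly}; then the odd degree of the $X_1(p)$ point divides both $p\,d_0$ and $(p-1)/2$, pinning it to $1$ for $p=5,7$ and $3$ for $p=13$, where Mazur and Gu\v{z}vi\'{c} apply. (For $p=11$ no torsion argument is needed at all: $\gon_{\Q}(X_1(2\cdot 11^k)) \leq 4 \cdot 11^{2k-2} < 5\cdot 11^{2k-2}=d_0$ already settles it.)

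The two cases you defer as ``the principal obstacle'' are exactly where your proposed tools cannot finish. For $p=5$ there is \emph{no} torsion contradiction available: $\Z/10\Z$ occurs over $\Q$ by Mazur's theorem, so producing a rational point of order $10$ rules nothing out, and since $d_0 = 5^{2k-3}$ sits strictly below the gonality upper bound $3\cdot 5^{2k-3}$, ``balancing divisibility against gonality'' as you propose cannot close the case. The paper instead uses the rational point of order $10$ to deduce that $E''/\Q$ has a rational $2$-isogeny, hence no rational $25$-isogeny and no two independent $5$-isogenies; Proposition \ref{Prop:index} then gives $\ord_5([\GL_2(\Z_5):\im\rho_{E''/\Q,5^\infty}])=0$, and Lemma \ref{lem:PrelimDiv} upgrades the divisibility to $5^{2k-2}\mid [F:\Q]$, contradicting $[F:\Q]=5^{2k-3}$. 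For the exceptional case $p=7$, $j(E')=3^3\cdot 5\cdot 7^5/2^7$, you give no argument; the paper requires an explicit $14$-division polynomial computation showing $3^3$ divides the degree of every odd-degree point on $X_1(14)$ attached to this curve, which yields $3^3\cdot 7^{\max(0,2k-3)} \mid [F:\Q]$ and beats the gonality bound $2\cdot 7^{2k-2}$. As written, the proposal establishes the easy cases and the correct strategy but leaves the decisive steps unproved or unprovable by the stated means.
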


\begin{proof}
Let $F \coloneqq \Q(x)$, and let $E/F$ be the non-CM $\Q$-curve associated to $x$ with an $F$-rational point of order $2p^k$. Suppose $\deg(x) < \gon_{\Q}(X_1(2p^k))$. Since $X_1(2)$ and $X_1(10)$ have genus 0, we may always assume $k \geq 1$ and that $k \geq 2$ if $p=5$. By \Cref{thm:CN}, there exists an $F$-rational isogeny $\varphi: E \rightarrow E'$ such that $j(E') \in \Q$. In this case, it follows from \Cref{prop:CN} that there exists a cyclic subgroup $C$ of $E'$ such that $[E',C] \in X_0(p)(\Q)$ or else $p=7$ and $j(E') = 3^3\cdot5\cdot7^5/2^7$.

For now, suppose that if $p=7$ then $j(E')\neq 3^3\cdot5\cdot7^5/2^7$.  From Proposition \ref{Prop:div}, we have $p \in \{5,7,11,13\}$ and $d_0 \mid [F:\Q]$ where
\[
d_0=\begin{cases}
5^{2k-3} \text{ if } p=5,\\
7^{2k-2} \text{ if } p=7,\\
5 \cdot 11^{2k-2} \text{ if } p=11,\\
3\cdot 13^{2k-2} \text{ if } p=13.
\end{cases}
\]
Since we have assumed $\deg(x) < \gon_{\Q}(X_1(2p^k))$ and $\deg(x)$ is odd, it must be that $[F:\Q]=d_0$. Indeed, by Proposition \ref{prop:Degree} and the $\Q$-gonality bounds on $X_1(14)$, $X_1(22)$, $X_1(26)$ and $X_1(50)$ from Table 1 in \cite{derickxVH}, we see that $\gon_{\Q}(X_1(2p^k)) \leq 3d_0$. In fact, if $p=11$, it follows that $\gon_{\Q}(X_1(2\cdot 11^k)) \leq 4 \cdot 11^{2k-2}<d_0$, so the claim follows already in this case.

Suppose for the sake of contradiction that $[F:\Q] =d_0$, and let $\varphi: E \rightarrow E'$ be the $F$-rational isogeny defined above. It follows from \Cref{cor:IsogenyCor} that $E'$ has a point $Q$ of order $p$ over a field extension $F'/F$ of degree dividing $p$. By composing $\varphi$ with the $F$-rational isogeny $E' \rightarrow E'/C$ induced by $[E',C] \in X_0(p)(\Q)$ and replacing $E'$ with $E'/C$ if necessary, we may assume by Lemma \ref{lem:ReplaceIsogeny} that the residue field of $[E',Q] \in X_1(p)$ has degree dividing $(p-1)/2$ and is contained in $F'$.

Since $E'$ also has a 2-isogeny over $F$ (because $E$ does), together these imply $E'$ has a point $P$ of order $2p$ over $F'$, a field of degree dividing $pd_0$. Let $y \coloneqq [E',P] \in X_1(2p)$, and let $y_1 \coloneqq [E',2P]=[E',Q] \in X_1(p)$ and $y_2 \coloneqq [E',pP] \in X_1(2)$ be the induced points. Since Proposition \ref{Prop:div} applies in particular to any non-CM elliptic curve with rational $j$-invariant, we see that $3 \nmid [F':\Q(y_1)]$. Thus $\Q(y)=\Q(y_1)\Q(y_2) =\Q(y_1)$. Since $y_1$ has odd degree dividing both $pd_0$ and $(p-1)/2$, then Proposition \ref{Prop:div} shows
\[
\deg(y_1)=\begin{cases}
1 \text{ if } p=5, 7\\
3 \text{ if } p=13.
\end{cases}
\]

Since $\Q(y)=\Q(y_1)$, we have an elliptic curve with rational $j$-invariant possessing a point of order $2p$ over a field of the degree indicated above. If $p=13$, this contradicts Gu\v{z}vi\'{c} \cite{Guzvic}. If $p=7$, this contradicts Mazur \cite{mazur77}. If $p=5$, then in particular this implies any elliptic curve $E''/\Q$ with $j(E'')=j(E')$ has a rational cyclic $2$-isogeny. Thus $E''$ cannot have a rational cyclic 25-isogeny or two independent 5-isogenies, and Proposition \ref{Prop:index} shows that $\ord_5([\GL_2(\Z_{5}): \im \rho_{E''/\Q, 5^{\infty}}]) =0$. Then Lemma \ref{lem:PrelimDiv} shows in fact $5^{2k-2} \mid [F:\Q]$, and we have reached a contradiction.

Finally, suppose $j(E') = 3^3\cdot5\cdot7^5/2^7$ and $p=7$. Then by Proposition \ref{Prop:div}, we have $3 \cdot 7^{\max(0,2k-3)} \mid [F:\Q]$. As above, $E'$ attains a point of order $2 \cdot 7$ in an extension $F'/F$ of degree dividing $7$. For a fixed elliptic curve $E''/\Q$ with $j(E'')= 3^3\cdot5\cdot7^5/2^7$, we may compute the 14-division polynomials to see that $3^3$ divides the degree of the residue field of any point of odd degree on $X_1(14)$ associated to $E''$. Thus $3^3 \mid [F':\Q]$, and together these imply $3^3 \cdot 7^{\max(0,2k-3)} \mid [F:\Q]$. As above, we see $\gon_{\Q}(X_1(2 \cdot 7^k))\leq 2 \cdot 7^{2k-2}<[F:\Q]$, as desired.
\end{proof}

\begin{prop} \label{Prop:4with7}
Let $x \in X_1(4\cdot7^k)$ be a point of odd degree corresponding to a non-CM $\Q$-curve $E$. Then $\deg(x) \geq \gon_{\Q}(X_1(4\cdot 7^k))$. In particular, $x$ is not sporadic.

\end{prop}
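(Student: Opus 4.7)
The plan is to mirror the proof of the preceding proposition: combine the lower bound on $[F:\Q]$ coming from Proposition \ref{Prop:div} with the upper bound on $\gon_\Q(X_1(4 \cdot 7^k))$ obtained by pulling back the $\Q$-gonality of $X_1(28)$ along the natural map of degree $7^{2k-2}$ given by Proposition \ref{prop:Degree}. More precisely, let $F = \Q(x)$, fix a model of $E/F$ with $P \in E(F)$ of order $4 \cdot 7^k$, and assume $\deg(x) < \gon_\Q(X_1(4 \cdot 7^k))$ for contradiction. By Theorem \ref{thm:CN}, $E$ is $F$-isogenous to some $E'/F$ with $j(E') \in \Q$, and Proposition \ref{Prop:div} then gives $7^{2k-2} \mid [F:\Q]$ unless $j(E') = 3^3 \cdot 5 \cdot 7^5/2^7$, in which case only $3 \cdot 7^{\max(0,2k-3)} \mid [F:\Q]$. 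On the other hand, Proposition \ref{prop:Degree} yields
\[
\gon_\Q(X_1(4 \cdot 7^k)) \leq \gon_\Q(X_1(28)) \cdot 7^{2k-2},
\]
so inserting the explicit value of $\gon_\Q(X_1(28))$ from Table 1 of \cite{derickxVH} forces $[F:\Q] = c \cdot 7^{2k-2}$ for a small odd integer $c$ that I can enumerate.

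In the generic subcase $j(E') \neq 3^3 \cdot 5 \cdot 7^5/2^7$, I would follow the pattern of the preceding proof: apply Corollary \ref{cor:IsogenyCor} and Lemma \ref{lem:ReplaceIsogeny} to produce, after possibly replacing $E'$ by an isogenous curve, a point of odd degree on $X_1(28)$ associated to an elliptic curve with rational $j$-invariant, and then use classification results for torsion on elliptic curves over $\Q$ in low-degree number fields (e.g.\ \cite{mazur77}, \cite{Guzvic}, \cite{OddDeg}), refined when necessary by Corollary \ref{cor:5.3}, to reach a contradiction. In the special subcase $j(E') = 3^3 \cdot 5 \cdot 7^5/2^7$, I would instead invoke the observation from the end of the proof of Proposition \ref{prop:combine2}, namely that any elliptic curve over $\Q$ with this $j$-invariant admits no $4$-isogeny over any odd-degree number field; transferring the $F$-rational cyclic subgroup of order $4$ generated by $7^k P$ across the $F$-rational isogeny $\varphi \colon E \to E'$, and then to a twist of a fixed $\Q$-model over an at most quadratic extension of $F$, should contradict that observation.

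The main obstacle I anticipate is this second subcase, where the divisibility lower bound on $[F:\Q]$ is weaker and one must carefully track the $2$-power part of $\varphi$ in order to convert the $F$-rational cyclic $4$-subgroup on $E$ into an honest $4$-isogeny on a $\Q$-model defined over an odd-degree extension. A secondary technical point is disposing of the two exceptional $j$-invariants $-3^3 \cdot 13 \cdot 479^3/2^{14}$ and $3^3 \cdot 13/2^2$ from Corollary \ref{cor:5.3}, for which the cleanest route is likely a direct inspection of the $4$- and $7$-division polynomials of fixed $\Q$-models, together with the classification of their $2$-adic and $7$-adic Galois images.
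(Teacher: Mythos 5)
Your overall frame matches the paper's: assume $\deg(x)<\gon_{\Q}(X_1(4\cdot 7^k))$, play the divisibility from Proposition \ref{Prop:div} against the bound $\gon_{\Q}(X_1(4\cdot 7^k))\leq 6\cdot 7^{2k-2}$ pulled back from $X_1(28)$, and dispose of the exceptional $j$-invariants $-3^3\cdot 13\cdot 479^3/2^{14}$ and $3^3\cdot 13/2^2$ by a division-polynomial computation (the paper factors the $7$-division polynomial of $E''$ over $K_0(\zeta_7)$ to force $9\cdot 7^{2k-2}\mid [F:\Q]$). Your special subcase $j(E')=3^3\cdot 5\cdot 7^5/2^7$ is actually vacuous here: Corollary \ref{cor:5.3}(2) already excludes it for odd-degree points on $X_1(4\cdot 7^k)$, precisely because its mod $2$ image is $S_3$ and it admits no $4$-isogeny over any odd-degree field; and your worry about tracking twists in that subcase is moot, since for non-CM curves the existence of a cyclic $n$-isogeny over a field depends only on the $j$-invariant, as noted in \S 2.3 of the paper.

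The genuine gap is in your generic subcase: you aim to produce a point of odd degree on $X_1(28)$ \emph{associated to an elliptic curve with rational $j$-invariant} and then contradict rational-$j$ torsion classifications (\cite{mazur77}, \cite{Guzvic}, \cite{OddDeg}). That target is unreachable. Corollary \ref{cor:5.3}(2) says $E''$ has full $2$-torsion but \emph{no $4$-isogeny} over the cubic field $K$; since $\rho_{E'',2^{\infty}}(\Gal_K)$ is pro-$2$ and $[F:\Q]$ is odd, one gets $\rho_{E'',2^{\infty}}(\Gal_K)=\rho_{E'',2^{\infty}}(\Gal_F)$, so $E''$ has no $4$-isogeny --- hence no point of order $4$, let alone $28$ --- over any odd-degree field. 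The same holds for every rational-$j$ curve in the geometric isogeny class, since $E''$ has no rational $2$-isogeny over $\Q$ (its mod $2$ image is cyclic of order $3$, fixing no line), so all such curves are connected to $E''$ by odd-degree isogenies preserving the $2$-adic image. The paper's proof therefore takes a route your toolbox cannot: it first pins $[F:\Q]=3\cdot 7^{2k-2}$ exactly (your constant $c$ must be forced to $3$ via the degree-$3$ point on $X_1(2)$), deduces $\ord_2(\deg(\varphi))=1$, and factors $\varphi$ through $E_0=E/\langle 2P\rangle$, a curve $2$-isogenous to $E'$ with $[\Q(j(E_0)):\Q]=3$. It then pins the degree of the induced point $[E_0,Q]\in X_1(28)$ down to exactly $3$ --- using the degree-$3$ point on $X_1(2)$, Proposition 4.6 of \cite{GJNajman} for the $X_1(4)$-level, and Lemma \ref{lem:ReplaceIsogeny} for the $X_1(7)$-level --- and contradicts Theorem A of \cite{Deg3Class}, the classification of torsion over \emph{cubic} fields for arbitrary $j$-invariants. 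Without passing to the cubic-$j$ curve $E_0$ and invoking the cubic classification, no contradiction is available, because the obstruction genuinely lives on a curve whose $j$-invariant generates $K$.
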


\begin{proof}
Let $x\ \in X_1(4\cdot7^k)$ be a point of odd degree corresponding to a non-CM $\Q$-curve $E$. Since $X_1(4)$ has genus 0, we may assume $k>0$. Let $F \coloneqq \Q(x)$, and fix a model of $E/F$ with an $F$-rational point of order $4 \cdot 7^k$. By \Cref{thm:CN}, there is an $F$-rational cyclic isogeny $\varphi:E \rightarrow E'$ such that $j(E') \in \Q$. Let $E''/\Q$ be an elliptic curve with $j(E'')=j(E')$. For now, suppose $j(E'') \notin \{-3^3 \cdot 13 \cdot 479^3/2^{14}, \, 3^3 \cdot 13/2^2\}$. By Corollary \ref{cor:5.3}, we may assume $E''$ has a rational cyclic 7-isogeny and full 2-torsion (but no 4-isogeny) over a cubic extension $K$. In particular, this means $j(E') \neq 3^3\cdot5\cdot7^5/2^7$.

Suppose for the sake of contradiction that $\deg(x) < \gon_{\Q}(X_1(4\cdot 7^k))$. By Proposition \ref{Prop:div}, we know that
$
7^{2k-2} \mid [F:\Q].
$
Since $E''$ corresponds to a degree 3 point on $X_1(2)$ and $E'(F)$ has a point of order 2 (since $E(F)$ does), it follows that $3 \mid [F:\Q]$. Together, this gives
\[
3\cdot 7^{2k-2} \mid [F:\Q].
\]
Since $X_1(28)$ has $\Q$-gonality 6 by \cite{derickxVH}, it follows that $\gon_{\Q}(X_1(4 \cdot 7^k) \leq 6 \cdot 7^{2k-2}$. Thus our assumption that $\deg(x) < \gon_{\Q}(X_1(4\cdot 7^k))$ and is of odd degree implies
\[
\deg(x)=[F:\Q]=3\cdot 7^{2k-2}.
\]

As in the proof of Proposition \ref{prop:combine2}, we have $\rho_{E'',2^{\infty}}(\Gal_K)=\rho_{E'',2^{\infty}}(\Gal_F)$, and in particular $E''$ has no 4-isogeny over $F$. Thus $E'/F$ has no 4-isogeny. It follows that $\ord_2(\deg(\varphi))=1$. If $\ker(\varphi)=\langle P \rangle$, we see that $\varphi$ factors over $F$ as
\[
E \xrightarrow{\varphi_1} E_0 \coloneqq E/\langle 2P \rangle \xrightarrow{\varphi_2} E'.
\]
 Since $2 \nmid \deg(\varphi_1)$, there is a point of order 4 in $E_0(F)$, since $E(F)$ has such a point. By Corollary \ref{cor:IsogenyCor}, the curve $E'$ has a rational point of order 7 in an extension of $F'/F$ of degree dividing $7$. Since $E'/F'$ has a point of order 7 and $\deg(\hat{\varphi_2})=2$, it follows that $E_0$ has a point of order 7---and hence a point of order 28---over $F'$. That is, there exists $a\coloneqq [E_0,Q] \in X_1(28)$ of odd degree dividing $3 \cdot 7^{2k-1}$.

We note that $[\Q(j(E_0)):\Q]=3$, since $\hat{\varphi_2}$ has degree 2 and any point on $X_0(2)$ associated to $E'$ has degree 3. In fact, $\Q(\hat{\varphi_2})=\Q(j(E_0))=K$ for the cubic extension $K$ defined above. Moreover, any $E_1/K$ with $j(E_1)=j(E_0)$ will have a rational cyclic 7-isogeny and rational point of order 2, since $E''$ does. Thus $E_0$ corresponds to a point of degree 3 on $X_1(2)$, and this is the only possible degree of a point on $X_1(2)$ associated to $E_0$ having odd degree. It follows that $a_1 \coloneqq [E_0, 7Q] \in X_1(4)$ has degree 3 as well since it has odd degree and its degree over $[E_0, 14Q] \in X_1(2)$ divides $2^2$ by Proposition 4.6 in \cite{GJNajman}. Since there exists $[E_0,C] \in X_0(7)(\Q(j(E_0)))$, by replacing $E_0$ with $E_0/C$ if necessary we may assume by Lemma \ref{lem:ReplaceIsogeny} that $a_2 \coloneqq [E_0,4Q] \in X_1(7)$ has degree dividing $3 \cdot \frac{7-1}{2}=9$. Thus it must have degree 3. Since $\Q(a_1)\Q(a_2)$ is contained in $\Q(a)$, it must be that $\Q(a_1)\Q(a_2)$ has degree 3. As $\Q(a)$ is at most a quadratic extension of this field, it follows that $\Q(a)$ has degree 3. But this contradicts Theorem A in \cite{Deg3Class}.

Finally, assume $j(E'') \in \{-3^3 \cdot 13 \cdot 479^3/2^{14}, \, 3^3 \cdot 13/2^2\}$, and let $K_0$ denote the residue field of the degree 3 point on $X_1(2)$ associated to $E''$. As noted above, $K_0 \subseteq F$, and by Corollary \ref{cor:IsogenyCor}, $E'$ must have a point of order $7$ over $F(\zeta_7)$. A computation in Magma shows the $7$-division polynomial of $E''$ over $K_0(\zeta_7)$ factors as a product of irreducible polynomials of degree 3 and 21, which shows that any $F$ over which $E'$ is isogenous to an elliptic curve with a point of order 28 is divisible by 9. Thus by \Cref{Prop:div}, $F$ is divisible by $9 \cdot 7^{2k-2}$ and the claim follows.
\end{proof}

\subsection{Points on $X_1(2^a \cdot 3^k)$} Let $x\in X_1(2^a3^k)$ be a point of odd degree associated to a non-CM $\Q$-curve $E$. In this section, we show that $\deg(x) < \gon_{\Q}(X_1(2^a3^k))$ would imply $E$ is $\overline{\Q}$-isogenous to an elliptic curve corresponding to a rational point on one of 10 curves of genus at least 2.

\begin{lem} \label{lem:entanglement}
Let $E$ be an elliptic curve over $\Q$, and let $p$ be an odd prime. If $p>3$ and $E$ has no point of order $2$ over $\Q(E[p])$, then $E$ has no point of order $2$ over $\Q(E[p^k])$ for any $k \in \Z^+$. If $p=3$ and $E$ has no point of order $2$ over $\Q(E[3^{d+1}])$, where $3^d$ is the level of $\rho_{E,3^{\infty}}$, then $E$ has no point of order $2$ over $\Q(E[3^k])$ for any $k \in \Z^+$.
\end{lem}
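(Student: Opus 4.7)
The plan is to rephrase the conclusion in terms of the containment $\Q(E[2]) \subseteq \Q(E[p^k])$ and then exploit, for each prime, the structure of the $p$-adic Galois image above its level.

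\textbf{Step 1 (Reduction).} I would first reduce to a containment problem. Since $\Q(E[p^k])/\Q$ is Galois, a nonzero $\Q(E[p^k])$-rational $2$-torsion point $P$ has stabilizer in $\Gal(\Q(E[2])/\Q) \hookrightarrow S_3$ of index $1$ or $2$. The fixed field of such a stabilizer is a subfield of $\Q(E[2])$, and whenever $P$ is not already $\Q$-rational this fixed field is cubic; the $\Q$-Galois closure of such a cubic is $\Q(E[2])$ itself, which must then lie in $\Q(E[p^k])$. Modulo the trivial case where $E(\Q)$ already has a $2$-torsion point, the existence of a $\Q(E[p^k])$-rational $2$-torsion point is therefore equivalent to $\Q(E[2]) \subseteq \Q(E[p^k])$.

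\textbf{Step 2 (Case $p > 3$).} Here a coprimality argument suffices. The degree $[\Q(E[2]):\Q]$ divides $|S_3| = 6$, which is coprime to $p \geq 5$, while $[\Q(E[p^k]):\Q(E[p])]$ is a power of $p$ since $\Gal(\Q(E[p^k])/\Q(E[p]))$ lies in the pro-$p$ kernel of reduction $\GL_2(\Z/p^k) \to \GL_2(\F_p)$. Hence $[\Q(E[2]) : \Q(E[2]) \cap \Q(E[p])]$ divides both $6$ and a power of $p$, so it equals $1$, yielding $\Q(E[2]) \subseteq \Q(E[p])$.

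\textbf{Step 3 (Case $p = 3$, setup).} The coprimality trick fails when $p = 3$, but the same index bound still gives $[\Q(E[2]) : \Q(E[2]) \cap \Q(E[3^d])] \in \{1, 3\}$. If it equals $1$ we are done; otherwise the intersection is the unique index-$3$ subfield of $\Q(E[2])$, namely $\Q$ itself when $\Gal(\Q(E[2])/\Q) \cong \Z/3$, or the discriminant field $\Q(\sqrt{\Delta_E})$ when $\Gal(\Q(E[2])/\Q) \cong S_3$. In either case, $\Q(E[2])\cdot \Q(E[3^d])$ is a cyclic degree-$3$ extension of $\Q(E[3^d])$ sitting inside $\Q(E[3^k])$, yielding a surjection $N_d \twoheadrightarrow \Z/3$, where $N_j := \Gal(\Q(E[3^k])/\Q(E[3^j]))$. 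It suffices to show that any such surjection factors through $N_d/N_{d+1}$, since this gives $\Q(E[2])\cdot \Q(E[3^d]) \subseteq \Q(E[3^{d+1}])$ and hence $\Q(E[2]) \subseteq \Q(E[3^{d+1}])$.

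\textbf{Step 4 (Main obstacle).} The crux is a Frattini-quotient identification for $N_d$. The level hypothesis ensures $N_d$ is the \emph{full} kernel of reduction $\GL_2(\Z/3^k) \to \GL_2(\Z/3^d)$, consisting of matrices $I + 3^d X$ with $X \in M_2(\Z/3^{k-d})$. A direct expansion of cubes, $(I + 3^d X)^3 = I + 3^{d+1}X + 3^{2d+1}X^2 + 3^{3d}X^3$, and of commutators $[I + 3^d X, \, I + 3^d Y]$ should place both in $N_{d+1}$ when $d \geq 1$, so the Frattini subgroup satisfies $\Phi(N_d) = N_d^3 \cdot [N_d, N_d] \subseteq N_{d+1}$; equality then follows from the fact that $N_d/N_{d+1} \cong M_2(\F_3) \cong (\Z/3)^4$ is already elementary abelian. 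Since $\Z/3$ is elementary abelian, any homomorphism $N_d \to \Z/3$ must factor through $N_d/\Phi(N_d) = N_d/N_{d+1}$, completing the argument. The hard part is this last factorization, which depends essentially on the level hypothesis: without it, extra $\Z/3$-quotients coming from deeper in the Galois filtration could carry additional entanglement between the $2$- and $3^k$-torsion fields and invalidate the proof.
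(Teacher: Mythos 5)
Your proposal is correct in the regime $d \geq 1$, and despite the different packaging it rests on the same two pillars as the paper's proof. Your Steps 1--3 (the entanglement degree divides both $6$ and a power of $p$, via compositum degrees) are the field-theoretic form of the paper's Goursat computation that $[L_{s_1}:L_{s_2}]$ is a power of $p$ for $L_s = \ker(\im \rho_{E,2\cdot p^s} \to \im \rho_{E,p^s})$; for $p>3$ the two arguments are identical in content. For $p=3$, where the paper derives $L_d = L_{d+1}$ and then cites Proposition 3.5 of \cite{BELOV} to propagate the full congruence kernel from level $3^{d+1}$ to all of $\Z_3$, you instead prove the needed group theory by hand: the level hypothesis identifies $N_d$ with the full kernel $\{I + 3^d X\}$, whose Frattini quotient is $N_d/N_{d+1}$, so every $\Z/3$-quotient of $N_d$ is visible already at level $3^{d+1}$. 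Replacing that citation with an explicit Frattini computation is the one genuine difference; it buys self-containedness, while the paper's route buys brevity and the slightly stronger conclusion $\im \rho_{E,2\cdot 3^{\infty}} = \pi^{-1}(\im \rho_{E,2\cdot 3^{d}})$.

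Two points need repair. First, your justification of $\Phi(N_d) = N_{d+1}$ establishes the containment $\Phi(N_d) \subseteq N_{d+1}$ twice: both the cube/commutator expansions and the elementary-abelianness of $N_d/N_{d+1}$ give only that direction, whereas your factorization step actually needs the reverse inclusion $N_{d+1} \subseteq \Phi(N_d)$, since the homomorphism $N_d \to \Z/3$ kills $\Phi(N_d)$ and you must conclude it kills $N_{d+1}$. The reverse inclusion is true and follows from your displayed expansion read backwards: given $I + 3^{d+1}Y \in N_{d+1}$, solve $X + 3^d X^2 + 3^{2d-1}X^3 \equiv Y$ by successive approximation (possible exactly because $d \geq 1$), so $N_{d+1} \subseteq N_d^{3}$. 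This is precisely the content the paper outsources to \cite[Prop.~3.5]{BELOV}. Second, your argument---like the paper's---genuinely requires $d \geq 1$: when $d=0$ one has $N_0 = \GL_2(\Z/3^k)$, and the determinant followed by the surjection $(\Z/3^k)^{*} \twoheadrightarrow \Z/3$ (for $k \geq 2$) is a $\Z/3$-quotient that does \emph{not} factor through $N_0/N_1$; it cuts out the cubic subfield of $\Q(\zeta_9)$. This is why the lifting result cited by the paper carries the hypothesis $d \geq 1$, and in its applications (Propositions \ref{prop:2and3} and \ref{prop:4and3}) the paper only invokes the lemma with $d \geq 1$; you flagged ``$d \geq 1$'' but should state explicitly how the case $d=0$ is handled or excluded. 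Finally, a small slip in Step 1: ``stabilizer of index $1$ or $2$'' should read \emph{order} $1$ or $2$---when $E$ has no rational $2$-torsion the stabilizer has index $3$, which is exactly what makes $\Q(P)$ cubic with Galois closure $\Q(E[2])$.
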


\begin{proof}
We follow the approach of Proposition 6.1 in \cite{BELOV} and Lemma 19 in \cite{OddDeg}. For $s \in \Z^+$, let
\begin{align*}
L_s \coloneqq \ker(\im \rho_{E,2\cdot p^s} \rightarrow \im \rho_{E,p^s}),\\
K_s \coloneqq \ker(\im \rho_{E,2\cdot p^s} \rightarrow \im \rho_{E,2}),\\
K \coloneqq \ker(\im \rho_{E,2\cdot p^{\infty}} \rightarrow \im \rho_{E,2}).
\end{align*}
We may view $L_s$ as a subgroup of $\im \rho_{E, 2}$ and $K_s$ as a subgroup of $\im \rho_{E, p^s}$. Moreover, by Goursat's Lemma (see e.g., \cite[pg75]{Lang-algebra} or \cite{Goursat89}) we have the following diagram:
        \begin{equation*}\label{eq:diag2}
        \xymatrix{
            \im \rho_{E, p^{s}}/K_{s} \ar@{->>}[r] \ar[d]^{\cong} &  \im \rho_{E, p}/K_{1} \ar[d]^{\cong} \\
            \im \rho_{E, 2} / L_{s} \ar@{->>}[r]
             & \im \rho_{E, 2} / L_{1}}
        \end{equation*}
        The order of the kernel of the top map is a power of $p$, and so the order of the kernel of the bottom map is as well. Thus $[L_1:L_{s}]$ is a power of $p$, and more generally $[L_{s_1}:L_{s_2}]$ is a power of $p$ for all $ 1 \leq s_1 \leq s_2$. If $p>3$, it follows that for all $s \in \Z^+$ we have $L_1=L_s$ and
        \[
        \Q(E[2]) \cap \Q(E[p^s])=\Q(E[2]) \cap \Q(E[p]).
        \] In particular, if $E$ has no point of order 2 over $\Q(E[p])$, then $E$ has no point of order 2 over $\Q(E[p^k])$ for any $k$, as desired. Thus we may henceforth assume $p=3$.

Suppose $E$ has no point of order 2 over $\Q(E[3^{d+1}])$. The $x$-coordinate of any 2-torsion point generates a degree 3 extension of $\Q(E[2]) \cap \Q(E[3^{d+1}])$ contained in $\Q(E[2])$, from which is follows that $3 \mid \#L_{d+1}$. Similarly, $3 \mid \#L_d$. Since $\# L_i \mid 6$ for all $i$ and $[L_d:L_{d+1}]$ is a power of $3$, it follows that  $L_d=L_{d+1}$. Again by Goursat's Lemma
        \begin{equation*}
        \xymatrix{
            \im \rho_{E, 3^{d+1}}/K_{d+1} \ar@{->>}[r] \ar[d]^{\cong} &  \im \rho_{E, 3^d}/K_{d} \ar[d]^{\cong} \\
            \im \rho_{E, 2} / L_{d+1} \ar@{=}[r]
             & \im \rho_{E, 2} / L_{d}}
        \end{equation*}
      By assumption, $ \im \rho_{E, 3^{d+1}}=\pi^{-1}( \im \rho_{E, 3^d})$, and so this diagram implies
\[
\ker(K \text{ mod }{3^{d+1}} \rightarrow K \text{ mod }{3^d})=I+\text{M}_2(3^d\Z/3^{d+1}\Z).
\]
By Proposition 3.5 of \cite{BELOV},
\[
\ker(K \rightarrow K \text{ mod }{3^d})=I+3^d \text{M}_2(\Z_3),
\]
and so $\im \rho_{E,2\cdot 3^{\infty}} = \pi^{-1}(\im \rho_{E,2\cdot 3^d})$. This means $E$ has no point of order 2 over $\Q(E[3^k])$ for any $k \in \Z^+$.
\end{proof}

\begin{lem} \label{lem:ResidueFieldContainment}
Let $p$ be prime and let $F$ be a number field of odd degree. Suppose $E/F$ is a non-CM elliptic curve with $P \in E(F)$ of order $p^k$, and let $\varphi:E \rightarrow E'$ be an $F$-rational isogeny with $\deg(\varphi)=p^a$ and $j(E') \in \Q$. If $x=[E,P] \in X_1(p^k)$, then for any $E''/\Q$ with $j(E'')=j(E')$, we have
\[
\Q(x) \subseteq \Q(E''[p^{\infty}]).
\]
\end{lem}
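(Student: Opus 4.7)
The plan is to exhibit $x \in X_1(p^k)$ as represented by a pair $(E_0, Q)$ defined over $K \coloneqq \Q(E''[p^{\infty}])$, from which the moduli interpretation recalled just before Lemma \ref{lem:ResidueField} immediately gives $\Q(x) \subseteq K$. To construct $E_0/K$: since $E$ is non-CM we have $j(E') \neq 0, 1728$, so $E'$ and $E''$ are quadratic twists, isomorphic via some $\psi: E' \to E''$ over $F(\sqrt{d})$ for a suitable $d \in F^*$. Composing yields a cyclic isogeny $\eta \coloneqq \psi \circ \varphi: E \to E''$ of degree $p^a$ defined over $F(\sqrt{d})$, whose dual $\hat\eta: E'' \to E$ has kernel $C \subseteq E''[p^a]$. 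Because $\Gal_K$ acts trivially on $E''[p^{\infty}]$, it preserves $C$ setwise, so $C$ is $K$-rational and $E_0 \coloneqq E''/C$ is defined over $K$. Factoring $\hat\eta$ through the $K$-rational quotient $q: E'' \to E_0$ produces an isomorphism $\beta: E_0 \to E$ over $F(\sqrt{d}) \cdot K$.

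Next I would show $E_0[p^{\infty}] \subseteq E_0(K)$ via a Tate module argument. The dual $\hat q: E_0 \to E''$ of the $K$-rational quotient $q$ is itself defined over $K$, and it induces a $\Gal_K$-equivariant isomorphism of $p$-adic Tate modules $V_p(E_0) \xrightarrow{\sim} V_p(E'')$; since $\Gal_K$ acts trivially on $V_p(E'')$ by definition of $K$, it acts trivially on $V_p(E_0)$ as well, forcing every $p$-power torsion point of $E_0$ to be $K$-rational. Setting $Q \coloneqq \beta^{-1}(P) \in E_0(\bar{\Q})$, a point of order $p^k$, we therefore have $Q \in E_0[p^{\infty}] \subseteq E_0(K)$. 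The pair $(E_0, Q)$ is then defined over $K$, and $\beta$ identifies it with $(E,P)$, so $x = [E,P] = [E_0, Q]$ as closed points on $X_1(p^k)$, yielding $\Q(x) \subseteq K$.

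The main subtlety is that the isogeny $\eta : E \to E''$ (and hence the isomorphism $\beta : E_0 \to E$) is only defined over $F(\sqrt{d})$, not over $K$, so we cannot directly transfer the Galois action on $P$ back from one on $E''$. The trick is that we never need to descend $\eta$ itself: only the subgroup $C \subseteq E''[p^{\infty}]$ needs to be $K$-rational, which is automatic because $\Gal_K$ already fixes all $p$-power torsion of $E''$ pointwise. Once $E_0 = E''/C$ lives over $K$, the Tate module argument applied to the $K$-rational isogeny $\hat q$ is enough to place $Q$ inside $E_0(K)$, even though our specific identification $\beta$ of $E_0$ with $E$ still passes through the quadratic extension $F(\sqrt{d})$.
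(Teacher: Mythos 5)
Your proof is correct, but it takes a genuinely different route from the paper's. The paper never constructs a model over $K=\Q(E''[p^{\infty}])$: it instead replaces $E$ and $E'$ by models over $F'=\Q(j(E))$, which is the residue field of $[E',\ker\hat{\varphi}]\in X_0(p^a)$ by Lemma \ref{lem:IsogenyResidueField}, uses that curves linked by a $p$-power isogeny over $F'$ satisfy $F'(E[p^{\infty}])=F'(E'[p^{\infty}])$ to get $\Q(x)\subseteq F'(E'[p^{\infty}])$, passes to the quadratic extension $L/F'$ over which $E'\cong E''$ to get $\Q(x)\subseteq L(E''[p^{\infty}])$, and finally removes the possible quadratic discrepancy using that $\Q(E''[p^{\infty}])/\Q$ is Galois while $[\Q(x):\Q]$ is odd. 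You instead push everything onto the $\Q$-rational curve $E''$: the kernel $C=\ker\hat{\eta}$ is automatically $K$-rational (indeed pointwise fixed) because it lies in $E''[p^a]\subseteq E''(K)$, the quotient $E_0=E''/C$ descends to $K$, and transferring the trivial $\Gal_K$-action along the $K$-rational isogeny $\hat{q}$ at the level of Tate modules makes \emph{all} $p$-power torsion of $E_0$ rational over $K$, which is exactly what neutralizes the twist ambiguity in $\beta$. A notable consequence is that your argument never uses the hypothesis that $[F:\Q]$ is odd (nor that $P\in E(F)$), so it proves a strictly stronger statement; the paper's argument needs oddness precisely to strip off the quadratic extension $L$ at the last step, while your construction buys uniformity at the cost of more moduli-theoretic care. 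Two small points to tidy: $K$ is an infinite extension of $\Q$, so to invoke the moduli fact quoted before Lemma \ref{lem:ResidueField} (stated for number fields, via Deligne--Rapoport) you should observe that $E_0$ and $Q$ involve only finitely many elements of $K$ and hence are defined over a number field $K_0\subseteq K$, giving $\Q(x)\subseteq K_0\subseteq K$; and you assert that $\varphi$ (hence $\eta$) is cyclic, which is neither part of the hypothesis nor needed, since $\ker\hat{\eta}\subseteq E''[p^a]$ holds for any isogeny of degree $p^a$ because $\eta\circ\hat{\eta}=[p^a]$.
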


\begin{proof}
Let $\varphi:E \rightarrow E'$ be the $F$-rational isogeny as in the lemma statement. By considering the dual isogeny $\hat{\varphi}: E' \rightarrow E$, we may view $E$ as the quotient of $E'$ by some cyclic subgroup $C$ of order $p^a$. By Lemma \ref{lem:IsogenyResidueField},
\[
F' \coloneqq \Q(j(E),j(E'))=\Q(j(E))
\]
is the residue field of $[E',C] \in X_0(p^a)$. Thus, by replacing $E,E'$ with elliptic curves over $F'$ having the same $j$-invariant if necessary (which is permissible since $\Q(x)$ is model-independent),  we may view $\hat{\varphi}: E' \rightarrow E$ as a cyclic isogeny of elliptic curves over $F'$. It follows that
\[
F'(E'[p^{\infty}])=F'(E[p^\infty]).
\]
Since $\Q(x)$ is an extension of $F'$ contained in $F'(E[p^\infty])$, we have that $\Q(x)\subseteq F'(E'[p^\infty])$. Now, let $E''$ be an elliptic curve over $\Q$ with $j(E'')=j(E')$. Then there exists a quadratic extension $L/F'$ over which $E'$ and $E''$ become isomorphic. Thus
\[
\Q(x) \subseteq L(E''[p^{\infty}])=L(E'[p^{\infty}]).
\]
But since $F' \subseteq \Q(E''[p^{\infty}])$, it follows that $L(E''[p^{\infty}])$ is at most a degree 2 extension of $\Q(E''[p^{\infty}])$. Since $\Q(E''[p^{\infty}])$ is a Galois extension and $[\Q(x):\Q]$ is odd, it must be that $\Q(x) \subseteq \Q(E''[p^{\infty}])$.
\end{proof}

\begin{lem} \label{lem:SurjectiveMod2}
Suppose $E/\Q$ is an elliptic curve with $\im \rho_{E,3^{\infty}}$ contained in one of the following groups:
\begin{align*}
9H^0\mhyphen9a, \, \, \, & 9H^0\mhyphen9b, \, \, \, 9H^0\mhyphen9c\\
9J^0\mhyphen9a, \, \, \, & 9J^0\mhyphen9b, \, \, \, 9J^0\mhyphen9c
\end{align*}
Then the mod 2 Galois representation associated to $E$ is surjective.
\end{lem}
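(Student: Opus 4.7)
The goal is to show $\rho_{E,2}\colon \Gal_\Q\to\GL_2(\F_2)\cong S_3$ is surjective whenever $\im\rho_{E,3^\infty}$ lies in one of the six listed level-$9$ subgroups. Non-surjectivity of $\rho_{E,2}$ means $E$ has either a $\Q$-rational $2$-torsion point (image contained in the Borel of order $2$) or a $\Q$-rational cyclic $2$-isogeny without rational $2$-torsion (image of order $3$); in either case $E$ gives a non-cuspidal $\Q$-point on $X_0(2)$. So it suffices to prove that for each of the six groups $H$ in the list, no non-CM $E/\Q$ with $\im\rho_{E,3^\infty}\subseteq H$ can simultaneously correspond to a rational point on $X_0(2)$.

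The plan is a fiber-product computation. For each $H$ in the list, let $X_H$ denote the corresponding modular curve, which is a quotient of $X(9)$ because the level of each $H$ is $9$. Using the explicit equations for $X_H$ and the $j$-map $X_H\to X(1)$ available from the Sutherland--Zywina database of $3$-adic images, I would form the fiber product $Y_H := X_H\times_{X(1)} X_0(2)$, whose non-cuspidal $\Q$-points correspond (up to quadratic twist) to non-CM elliptic curves $E/\Q$ of the kind we wish to rule out. The target conclusion is that every $\Q$-rational point of $Y_H$ is a cusp or a known CM point, with any surviving CM cases checked directly by factoring the $2$-division polynomial of a representative model. In practice the computation should only need to be carried out twice, since the three ``$a$, $b$, $c$'' labels within each of the two families $9H^0$ and $9J^0$ typically correspond to subgroups that are conjugate in $\GL_2(\Z/9\Z)$ or differ by a Galois twist, giving isomorphic modular curves over $\Q$.

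The main obstacle is that the fiber products $Y_H$ may have large genus, so extracting their rational points is nontrivial and will require several standard tools: mapping to lower-genus quotients with known Mordell--Weil data, applying Chabauty--Coleman or the Mordell--Weil sieve when the rank is small, and cross-referencing with known classifications (for instance, fiber products of the Rouse--Zureick-Brown $2$-adic curves with small-level $X_0(N)$ that have been analyzed in work cited earlier in the paper). A useful a priori constraint is that the $3$-adic image class dictated by $H$ already severely restricts the possible $j$-invariants, so that intersecting with the $X_0(2)$ condition should leave only finitely many candidates, each disposed of by an explicit calculation.
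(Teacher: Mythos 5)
Your opening reduction contains a genuine error that sinks the whole plan. In $\GL_2(\F_2)\cong S_3$ the proper subgroups are the trivial group, the three subgroups of order $2$, and the unique cyclic subgroup of order $3$. A cyclic image of order $3$ acts transitively on the three nonzero vectors of $\F_2^2$, hence stabilizes no order-$2$ subgroup, so in that case $E$ has no rational $2$-isogeny at all; indeed, since the kernel of a cyclic $2$-isogeny is $\{O,P\}$ with $P$ of order $2$, Galois-stability of the kernel forces $\sigma(P)=P$, so a rational cyclic $2$-isogeny is literally equivalent to a rational point of order $2$. Your claimed dichotomy ``rational $2$-torsion point, or rational cyclic $2$-isogeny without rational $2$-torsion'' is therefore vacuous in its second branch, and the assertion that non-surjectivity of $\rho_{E,2}$ always produces a non-cuspidal point of $X_0(2)(\Q)$ is false: when $\im \rho_{E,2}$ has order $3$ (equivalently, $\Q(E[2])$ is a cyclic cubic field, equivalently $\Delta(E)$ is a square), $E$ contributes no point on $X_0(2)$ whatsoever. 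Consequently your fiber products $X_H\times_{X(1)}X_0(2)$ could only eliminate the rational $2$-torsion case and are blind to the order-$3$ case, which is the substantive one for this lemma.

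For comparison, the paper's proof treats the two cases with different auxiliary curves. For the $2$-torsion case it needs little new computation: the $9J^0$ groups cover $9C^0\mhyphen9a$, and the fiber product of $X_{9C^0\mhyphen9a}$ with $X_0(2)$ was already shown to have no non-CM, non-cuspidal rational points in the proof of Proposition 23 of \cite{OddDeg}, leaving only the $9H^0$ groups to check against $X_0(2)$. For the cyclic cubic case it first disposes of the $9H^0$ groups group-theoretically: they reduce mod $3$ into $3D^0\mhyphen3a$, and the entanglement tables of Daniels and Gonz\'alez-Jim\'enez \cite{DanielsGJ} show that no elliptic curve over $\Q$ has mod $3$ image in $3D^0\mhyphen3a$ simultaneously with cyclic mod $2$ image of order $3$. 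Only then does it compute fiber products for the three $9J^0$ groups---crucially, against the modular curve parameterizing elliptic curves with \emph{cyclic} $\im \rho_{E,2}$ (the square-discriminant condition), not against $X_0(2)$---obtaining genus $2$ curves with rank-zero Jacobians and only cuspidal rational points. To repair your proposal you would have to introduce this second auxiliary curve for the order-$3$ case; as written, your argument cannot rule out curves with square discriminant and the listed $3$-adic images. (Your side remark that the $a$, $b$, $c$ labels should give conjugate subgroups with $\Q$-isomorphic curves is also not how the paper proceeds; it exploits the common covers $9C^0\mhyphen9a$ and $3D^0\mhyphen3a$ instead.)
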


\begin{proof}
First, note that $E$ cannot have a rational point of order 2. The fiber product of $9C^0 \mhyphen 9a$ with $X_0(2)$ has no non-CM, non-cuspidal points by \cite[proof of Prop. 23]{OddDeg}, so it remains to check the fiber product of $X_0(2)$ with each of the first 3 groups listed (since the latter 3 all cover $9C^0 \mhyphen 9a$).

Suppose $\Q(E[2])$ is a cyclic cubic extension of $\Q$. Then $\im \rho_{E, 3^{\infty}}$ cannot be contained in $3D^0\mhyphen3a$ as an elliptic curve cannot have simultaneously this mod 3 image and a cyclic mod 2 image of order 3 by \cite[Tables 8 \& 11]{DanielsGJ}. Thus since the first 3 groups cover $3D^0\mhyphen3a$, we may assume $\im \rho_{E, 3^{\infty}}$  is contained in
\[
9J^0\mhyphen9a, \, \, \,  9J^0\mhyphen9b, \, \, \, 9J^0\mhyphen9c.
\]
For each group we compute the fiber product (over $X_0(1)$) of the modular curve parameterizing elliptic curves $E$ with cyclic $\im \rho_{E,2}$ with the modular curve parameterizing elliptic curves with mod $9$ image contained in the respective groups. In all the cases we get genus 2 curves with Jacobian of rank 0 over $\Q$ and an easy computation shows that none of the modular curves have any non-cuspidal rational points. See the research website of either author for the code used.
\end{proof}

\begin{prop}\label{prop:2and3}
Let $x=[E,P] \in X_1(2\cdot 3^k)$ be a point of odd degree, where $E$ is a non-CM $\Q$-curve. If $\deg(x)<\gon_{\Q}(X_1(2\cdot 3^k))$, then $E$ is $\overline{\Q}$-isogenous to an elliptic curve corresponding to a rational point on $X_{H_i}$ for $1 \leq i \leq 6$, where $H_i$ is as defined in the appendix. In particular, since all curves $X_{H_i}$ have genus at least 2, $x$ is not sporadic unless $E$ is $\overline{\Q}$-isogenous to one of a finite number of elliptic curves over $\Q$.
\end{prop}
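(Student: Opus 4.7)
The plan is to reduce the problem to detecting unusual entanglement between the $2$-torsion and $3^k$-torsion fields of an elliptic curve $E''/\Q$ in the $\overline{\Q}$-isogeny class of $E$; such entanglement is controlled by a finite collection of modular curves, which will turn out to be exactly the $X_{H_1},\ldots,X_{H_6}$ from the appendix.

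Set $F \coloneqq \Q(x)$, which has odd degree, and apply \Cref{thm:CN} to obtain an $F$-rational isogeny $\varphi\colon E \to E'$ with $j(E') \in \Q$; fix $E''/\Q$ with $j(E'') = j(E')$, noting that $E' \cong E''$ over an at-most-quadratic extension $L/F$. Decompose $P \in E(F)$ as $\alpha = 3^k P$ (of order $2$) and $\beta = 2P$ (of order $3^k$). Applying \Cref{lem:ResidueFieldContainment}---or, if needed, a variant obtained by first peeling off the $2$-primary and $3$-primary parts of $\varphi$, using that $F$-isogenies of degree coprime to $p$ preserve $p$-adic torsion fields---one obtains
\[
\Q([E,\alpha]) \subseteq \Q(E''[2^\infty]) \quad\text{and}\quad \Q([E,\beta]) \subseteq \Q(E''[3^\infty]).
\]
Since $\Q(x)$ is the compositum of these two residue fields, this places $\Q(x)$ inside $\Q(E''[2^\infty]) \cdot \Q(E''[3^\infty])$ and reduces the problem to the Galois theory of the mixed torsion fields of the single rational curve $E''$.

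Next, \Cref{Prop:div} yields $3^{\max(0,2k-4)} \mid [F:\Q]$. Combining this divisibility with an upper bound on $\gon_{\Q}(X_1(2\cdot 3^k))$---obtained by composing the natural map $X_1(2\cdot 3^k)\to X_1(18)$ of \Cref{prop:Degree} with the gonality bound of Derickx--van Hoeij \cite{derickxVH}---and using the odd-degree hypothesis, I would pin $[F:\Q]$ down to a short list of odd multiples of $3^{2k-4}$. Small $k$ is dispatched directly (for $k=1$, gonality is $1$ and a rational $6$-torsion point on a non-CM $\Q$-curve of odd degree is ruled out by \cite{CremonaNajmanQCurve,OddDeg}; $k=2$ is analogous). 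For $k\geq 3$, the forced value of $[F:\Q]$ is strictly smaller than the generic Galois-theoretic prediction for $[\Q(E''[2])\cdot\Q(E''[3^k]):\Q]$, so the discrepancy must be absorbed by a nontrivial intersection $\Q(E''[2]) \cap \Q(E''[3^k]) \neq \Q$. As $[\Q(E''[2]):\Q]$ divides $6$ and the intersection has odd degree over $\Q$, the intersection must contain the cyclic cubic subfield of $\Q(E''[2])$, which realizes exactly the unusual entanglement between the $2$- and $3^k$-torsion fields of $E''$.

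Finally, \Cref{lem:entanglement} shows that such entanglement is already visible at level $3^{d+1}$, where $3^d$ is the level of $\rho_{E'',3^\infty}$. Since $d$ is uniformly bounded on non-CM elliptic curves over $\Q$ (via \Cref{Prop:index} together with the additional constraint that $E''$ must admit a rational $3$-isogeny, which forces the level to divide $27$), the possible subgroups $H \subseteq \GL_2(\F_2)\times\GL_2(\Z/3^{d+1}\Z)$ encoding this entanglement form a finite, enumerable family. After grouping by conjugacy and invoking \Cref{lem:SurjectiveMod2} to discard those with surjective mod $2$ image (which are incompatible with the cubic containment $\Q(E''[2]) \subseteq \Q(E''[27])$), one is left with exactly the six groups $H_1,\ldots,H_6$, and Faltings' theorem gives the final finiteness statement since each $X_{H_i}$ has genus at least $2$. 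The main obstacle is the numerical matching in the second step: one must carefully compute $[\Q(E''[2])\cdot\Q(E''[3^k]):\Q]$ from the images of the individual mod $2$ and mod $3^k$ representations and verify that the gonality-based upper bound is tight enough to force the specific entanglement inclusion rather than some other arithmetic coincidence; the subsequent enumeration is then a finite check that depends essentially on this correct identification of the level.
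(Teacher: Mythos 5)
Your overall architecture matches the paper's---reduce to a curve $E''/\Q$ via \Cref{thm:CN}, pin down $[F:\Q]$ using \Cref{Prop:div} together with the gonality bound through $X_1(18)$ (which in fact forces exactly $[F:\Q]=3^{2k-4}$, not merely a short list), bound the entanglement level via \Cref{lem:entanglement}, and finish by enumerating groups---but the central step, where the $2$--$3$ entanglement is actually produced, does not work as written. The containment $\Q(x)\subseteq \Q(E''[2^\infty])\cdot\Q(E''[3^\infty])$ yields no relation between $[F:\Q]$ and $[\Q(E''[2])\cdot\Q(E''[3^k]):\Q]$, so the ``generic Galois-theoretic prediction'' discrepancy you invoke has no inequality behind it: $F$ is the residue field of a single point of order $2\cdot 3^k$, which is generically far smaller than the compositum of the full torsion fields, and nothing forces the two to be comparable. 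The paper's mechanism is different and is the crux you are missing: applying \Cref{Prop:div} to the image point $f(x)\in X_1(3^k)$ shows $F=\Q(f(x))$, and then---after replacing $E$ by a quotient so that the isogeny $\varphi$ has degree $3^a$ (the ``peeling'' you allude to, but it must be carried out first, since \Cref{lem:ResidueFieldContainment} requires an isogeny of $p$-power degree)---that lemma gives $F\subseteq\Q(E''[3^\infty])$. It is this containment of the \emph{whole} field $F$, which also contains the cubic residue field of a $2$-torsion point of $E''$, that transports the $2$-torsion data into the $3$-adic torsion field; your first containment $\Q([E,\alpha])\subseteq\Q(E''[2^\infty])$ points in the wrong direction and plays no role.

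Two of your concrete claims are moreover false. The intersection $\Q(E''[2])\cap\Q(E''[3^k])$ is an intersection of Galois extensions, hence Galois over $\Q$; an $S_3$-sextic $2$-torsion field has no normal cubic subfield, so the intersection cannot be a ``cyclic cubic subfield of $\Q(E''[2])$'' of odd degree---in the actual situation the mod $2$ image is surjective and the entire sextic $\Q(E''[2])$ lies in $\Q(E''[27])$. Relatedly, you use \Cref{lem:SurjectiveMod2} backwards: it is not a sieve discarding groups with surjective mod $2$ image (the groups $H_1,\dots,H_6$ all have surjective mod $2$ projection); rather, surjectivity is what upgrades the rationality of one $2$-torsion point over the Galois extension $\Q(E''[3^\infty])$ to the full containment $\Q(E''[2])\subseteq\Q(E''[3^\infty])$, by passing to the Galois closure of the cubic. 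You also skip the step pinning $d=\ord_3([\GL_2(\Z_3):\im\rho_{E''/\Q,3^\infty}])$ to exactly $2$ (forced by $[F:\Q]=3^{2k-4}$ and \Cref{lem:PrelimDiv}), which, combined with the rational $3$-isogeny and the replacement of $E''$ within its $\Q$-isogeny class to avoid rational $9$-isogenies, is what restricts the $3$-adic image to the $9H$ and $9J$ families of level $9$---the hypothesis both of \Cref{lem:SurjectiveMod2} and of the mod $54$ computation that produces $H_1,\dots,H_6$. Without these repairs your enumeration has no justified starting point.
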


\begin{proof}
Let $F \coloneqq \Q(x)$, and fix a model of $E/F$ with $P \in E(F)$. We may assume $k \geq 2$ since $X_1(6)$ has genus 0. Since $X_1(18)$ has $\Q$-gonality 2 by \cite{derickxVH}, it follows that $\gon_{\Q}(X_1(2 \cdot 3^k))$ is at most
\[
\deg(X_1(2\cdot 3^k) \rightarrow X_1(18))\cdot 2=3^{2k-4}\cdot 2.
\]
Since we have assumed $\deg(x)<\gon_{\Q}(X_1(2\cdot 3^k))$ and  $3^{2k-4} \mid [F:\Q]$ by Proposition \ref{Prop:div}, it must be that $[F:\Q] = 3^{2k-4}$. Note in this case Proposition \ref{Prop:div} also implies $F=\Q(f(x))$, where $f: X_1(2\cdot 3^k) \rightarrow X_1(3^k)$.

By \Cref{thm:CN}, there exists an $F$-rational isogeny $\varphi: E \rightarrow E'$ such that $j(E') \in \Q$. We may assume $\varphi$ is cyclic and generated by a point $Q$ of order $3^a\cdot n$ where $3 \nmid n$. If $n >1$, then we replace $E$ with the quotient $E/ \langle 3^a Q \rangle$. This new curve has a point of order $3^k$, and also a rational cyclic 2-isogeny (since $E$ has one). Thus, by replacing $E$ by this quotient if necessary, we may assume $\varphi$ has degree $3^a$.

Now, let $E''$ be an elliptic curve over $\Q$ with $j(E'')=j(E')$. By Lemma \ref{lem:ResidueFieldContainment}, we have $F \subseteq \Q(E''[3^{\infty}])$. Also, since $E$ has a rational 2-isogeny over $F$, so does $E''$. That is, there exists a point $P_0 \in E''$ of order 2 such that the residue field of $[E'', P_0] \in X_1(2)$ is contained in $\Q(E''[3^{\infty}])$.

It follows from Lemma \ref{lem:PrelimDiv} that
\[
3^{\text{max}(0,2k-2-d)} \mid [F:\Q]
\]
where $d=\ord_3([\GL_2(\Z_{3}): \im \rho_{E''/\Q, 3^{\infty}}])$. Since $[F:\Q] = 3^{2k-4}$, it must be that $d \geq 2$. Thus $d=2$ by Proposition \ref{Prop:index}, and $E''$ must have a rational cyclic 3-isogeny over $\Q$ by Proposition 3.3 in \cite{CremonaNajmanQCurve}. Thus $\im \rho_{E'', 3^{\infty}}$ is contained in one of the following by Appendix A of \cite{OddDeg} (see also index data in Table 1 of \cite{SutherlandZywina}):
\begin{align*}
9H^0\mhyphen9a, \, \, \, & 9H^0\mhyphen9b, \, \, \, 9H^0\mhyphen9c\\
9I^0\mhyphen9a, \, \, \, & 9I^0\mhyphen9b, \, \, \, 9I^0\mhyphen9c\\
9J^0\mhyphen9a, \, \, \, & 9J^0\mhyphen9b, \, \, \, 9J^0\mhyphen9c\\
& 27A^0\mhyphen27a
\end{align*}

Since it suffices to identify a single elliptic curve over $\Q$ in the $\overline{\Q}$-isogeny class of $E$, we are free to replace $E''$ with a different elliptic curve in its $\Q$-isogeny class. In particular, since any elliptic curve with a rational cyclic isogeny of degree 9 is isogenous to an elliptic curve with two independent 3-isogenies, we may assume $E''$ does \emph{not} have a rational cyclic 9-isogeny. That is, we may assume $\im \rho_{E'', 3^{\infty}}$ is contained in one of the following:
\begin{align*}
9H^0\mhyphen9a, \, \, \, & 9H^0\mhyphen9b, \, \, \, 9H^0\mhyphen9c\\
9J^0\mhyphen9a, \, \, \, & 9J^0\mhyphen9b, \, \, \, 9J^0\mhyphen9c
\end{align*}
Note that it is still the case that $E''$ has a point $P_0$ of order 2 defined over $\Q(E''[3^{\infty}])$.

First, note that the mod 2 Galois representation associated to $E''$ is surjective by Lemma \ref{lem:SurjectiveMod2}. Thus $\Q(P_0)$ is a cubic extension contained in $\Q(E''[3^{\infty}])$. Since the latter extension is Galois, it follows that $\Q(E''[2]) \subseteq \Q(E''[3^{\infty}])$. By Lemma  \ref{lem:entanglement}, we have $\Q(E''[2]) \subseteq \Q(E''[3^{3}])$. We use Magma to search for subgroups of $\GL_2(\Z/54\Z)$ which could be the image of the mod 54 Galois representation associated to $E''$, following the method of \cite[Prop. 24]{OddDeg}. This results in a list of 30 possible subgroups (up to conjugacy), but for each the mod 18 reduction is (conjugate to) a subgroup of one of the groups $H_i$ for $1 \leq i \leq 6$; see the appendix for a list of generators for each $H_i$. See the research website of either author for the code used. In each case, the genus of $X_{H_i}$ is at least 2. \qedhere
\end{proof}

\begin{prop} \label{prop:4and3}
Let $x \in X_1(4\cdot 3^k)$  be a point of odd degree corresponding to a non-CM $\Q$-curve $E$. If $\deg(x) < \gon_{\Q}(X_1(2\cdot 3^k))$, then $E$ is $\overline{\Q}$-isogenous to an elliptic curve corresponding to a rational point on $X_{H_i}$ for $7 \leq i \leq 10$, where $H_i$ is as defined in the appendix. In particular, since all curves $X_{H_i}$ have genus at least $2$, $x$ is not sporadic unless $E$ is $\overline{\Q}$-isogenous to one of a finite number of elliptic curves over $\Q$.
\end{prop}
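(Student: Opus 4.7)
The plan is to mirror the architecture of Proposition~\ref{prop:2and3}, with the role of the rational $2$-torsion point upgraded throughout to a rational point of order $4$. Let $F \coloneqq \Q(x)$ and fix a model of $E/F$ with $P \in E(F)$ of order $4 \cdot 3^k$. I would first reduce to $k \geq 2$, handling the small cases $k \in \{0,1\}$ directly via the known low-degree points on $X_1(12)$ and $X_1(36)$. For $k \geq 2$, combining the Derickx--van Hoeij bounds on $\gon_\Q(X_1(36))$ with Proposition~\ref{prop:Degree} gives an upper bound of the form $c \cdot 3^{2k-4}$ on $\gon_\Q(X_1(4 \cdot 3^k))$, which, paired with the divisibility $3^{2k-4} \mid [F:\Q]$ from Proposition~\ref{Prop:div}(6), pins down $[F:\Q]$ up to a small explicit constant.

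Next, Corollary~\ref{cor:5.3} produces an $\overline{\Q}$-isogeny from $E$ to an elliptic curve $E''/\Q$ that either (i) has $j$-invariant in $\{3^2 \cdot 23^3/2^6,\, -3^3 \cdot 11^3/2^2\}$, which I would dispatch via a direct division-polynomial computation showing no non-CM $\Q$-curve with the required point of order $4 \cdot 3^k$ lies in these isogeny classes below the gonality bound, or (ii) possesses a rational cyclic $3$-isogeny together with full $2$-torsion over a cubic extension but no rational $4$-isogeny. Theorem~\ref{thm:CN} supplies an $F$-rational isogeny $\varphi: E \to E'$ with $j(E') \in \Q$; quotienting by any prime-to-$3$ part of $\ker\varphi$ preserves the rational cyclic $2$-isogeny on $E$ (and hence the point of order $4$), so I may assume $\deg \varphi = 3^a$.

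Applying Lemma~\ref{lem:ResidueFieldContainment} then gives $F \subseteq \Q(E''[3^\infty])$. Because $E(F)$ carries a point of order $4$, the residue field of an appropriate order-$4$ point on a twist of $E''$ also lies inside $\Q(E''[3^\infty])$, forcing non-trivial entanglement between $\Q(E''[4])$ and $\Q(E''[3^\infty])$. Lemma~\ref{lem:entanglement} at the prime $3$ bounds this entanglement to level $3^{d+1}$, where $3^d$ is the level of $\rho_{E'',3^\infty}$. After replacing $E''$ within its $\Q$-isogeny class so that it does not have a rational cyclic $9$-isogeny, the image of $\rho_{E'',3^\infty}$ lies in one of the six groups $9H^0\mhyphen9a$, $9H^0\mhyphen9b$, $9H^0\mhyphen9c$, $9J^0\mhyphen9a$, $9J^0\mhyphen9b$, $9J^0\mhyphen9c$, exactly as in the proof of Proposition~\ref{prop:2and3}.

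The final step is a Magma enumeration parallel to that in Proposition~\ref{prop:2and3}: I would search through subgroups of $\GL_2(\Z/108\Z)$ simultaneously compatible with the listed mod-$3^\infty$ images, the mod-$2^\infty$ structure dictated by Corollary~\ref{cor:5.3}, and the $2$-$3$ entanglement producing rational $4$-torsion on the twisted model of $E''$. Up to conjugacy, each surviving subgroup should reduce modulo an appropriate level to a subgroup of one of the four groups $H_7, H_8, H_9, H_{10}$ listed in the appendix, each of whose associated modular curve has genus at least $2$. The main obstacle is this group-theoretic enumeration: demanding rational $4$-torsion rather than just $2$-torsion enlarges the search space compared to the $2 \cdot 3^k$ case, and the delicate part is certifying both that no new entanglement family emerges beyond the four predicted and that the exceptional $j$-invariants cannot slip through the gonality bound.
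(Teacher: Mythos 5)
Your outline diverges from what is actually needed in two places, and both are genuine gaps rather than cosmetic differences. First, you drop one of the two alternatives in Corollary~\ref{cor:5.3}(1): besides the exceptional $j$-invariants and the case of a rational cyclic $3$-isogeny with full $2$-torsion over a cubic field, the corollary also allows $E''$ to have a rational cyclic $6$-isogeny, i.e.\ a rational point of order $2$. Your proposal never addresses this case. The paper handles it by showing (as in the proof of Proposition~\ref{prop:combine2}) that such an $E''$ has a rational $4$-isogeny or independent $2$-isogenies, which by Kenku's classification rules out a rational point on $X_{3D^0\mhyphen3a}=X_0(3,3)$ and $X_{9B^0\mhyphen9a}=X_0(9)$, and a fiber-product computation rules out $X_{9C^0\mhyphen9a}$; the $3$-adic image is then $3B^0\mhyphen3a$, so $\ord_3$ of the index is $0$, Lemma~\ref{lem:PrelimDiv} gives $3^{2k-2}\mid[F:\Q]$, and this exceeds the gonality bound $8\cdot 3^{2k-4}$. (The exceptional $j$-invariants $3^2\cdot 23^3/2^6$ and $-3^3\cdot 11^3/2^2$ are dispatched by the same index-zero mechanism; your proposed division-polynomial computation cannot by itself control a point of order $4\cdot 3^k$ for unbounded $k$ --- you need the $3$-adic image to propagate the divisibility to all $k$.)

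Second, your entanglement step is miscalibrated, and in a way that contradicts the paper's own Lemma~\ref{lem:SurjectiveMod2}. In the surviving case $E''$ has full $2$-torsion over a cubic field, so its mod $2$ image is cyclic of order $3$, hence \emph{not} surjective; by Lemma~\ref{lem:SurjectiveMod2} the $3$-adic image therefore cannot lie in the six groups $9H^0\mhyphen9a$, \dots, $9J^0\mhyphen9c$ that you import from Proposition~\ref{prop:2and3}. Instead it must be exactly $9C^0\mhyphen9a$, which is what yields $\ord_3$-index $1$, hence $3^{2k-3}\mid[F:\Q]$ and then the exact equality $[F:\Q]=3^{2k-3}$ against the bound $\gon_{\Q}(X_1(4\cdot 3^k))\leq 8\cdot 3^{2k-4}$; this equality is essential, since it identifies $F$ with the residue field of the induced point $x''=[E_0,P_0]\in X_1(2\cdot 3^k)$ on the intermediate quotient $E_0$, which is what legitimizes applying Lemma~\ref{lem:ResidueFieldContainment} (that lemma requires a point of prime-power order and a $3$-power-degree isogeny, so one must first strip the prime-to-$3$ part of $\ker\varphi$). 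Relatedly, your plan to force ``rational $4$-torsion on a twist of $E''$'' and search $\GL_2(\Z/108\Z)$ cannot work: $E''$ has no $4$-isogeny over the cubic field (this is part of Corollary~\ref{cor:5.3}'s conclusion), so only a $2$-isogeny transfers from $E$ to $E''$, and the correct entanglement is merely $\Q(E''[2])\subseteq \Q(E''[3^3])$ via Lemma~\ref{lem:entanglement}. The enumeration is then over subgroups of $\GL_2(\Z/18\Z)$ --- the groups $H_7,\dots,H_{10}$ are mod $18$ groups --- and a mod $108$ search constrained by rational $4$-torsion imposes a condition the geometry does not supply, so there is no reason its output would match the appendix.
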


\begin{proof} Let $x\ \in X_1(4\cdot 3^k)$ be a point of odd degree corresponding to a non-CM $\Q$-curve $E$, and suppose $\deg(x)<\gon_{\Q}(X_1(2\cdot 3^k))$. Since $X_1(4)$ and $X_1(12)$ have genus 0, we may assume $k>1$. Let $F \coloneqq \Q(x)$, and fix a model of $E/F$ with an $F$-rational point of order $4 \cdot 3^k$. By \Cref{thm:CN}, there is an $F$-rational cyclic isogeny $\varphi:E \rightarrow E'$ such that $j(E') \in \Q$. Suppose first that $j(E') \in \{3^2 \cdot 23^3/2^6, \, -3^3\cdot 11^3/2^2\}$. Since these elliptic curves lie in the same isogeny class, it suffices to consider only one of them, so suppose $j(E')=-3^3\cdot 11^3/2^2$. Let $E''/\Q$ be an elliptic curve with $j(E'')=j(E')=-3^3\cdot 11^3/2^2$. We check that $3$-adic image (with $\pm I$ added) is $3B^0\mhyphen 3A$; we do this by showing that $j_G(t)=j(E'')$ has no roots for all minimal subgroups $G$ of $3B^0 \mhyphen 3A$. Hence
$$\ord_3([\GL_2(\Z_{3}): \im \rho_{E''/\Q, 3^{\infty}}])=\ord_3([\GL_2(\Z/3\Z): \im \rho_{E''/\Q, 3}])=0.$$
Thus by  \Cref{lem:PrelimDiv},
\[
3^{2k-2} \mid [F:\Q].
\] Since $X_1(36)$ has $\Q$-gonality 8 by \cite{derickxVH}, it follows that $\gon_{\Q}(X_1(4 \cdot 3^k)) \leq 8 \cdot 3^{2k-4}<9 \cdot 3^{2k-4}=3^{2k-2}$, and we have a contradiction.

Henceforth we may assume $j(E') \notin \{3^2 \cdot 23^3/2^6, \, -3^3\cdot 11^3/2^2\}$, and so by Corollary \ref{cor:5.3}, any $E''/\Q$ with $j(E'')=j(E')$ has a rational cyclic 3-isogeny and either (a) a rational point of order 2 or (b) full 2-torsion over a cubic extension but no 4-isogeny over this extension.

First, suppose $E''$ has a rational point of order 2. Then as in the third paragraph of the proof of Proposition \ref{prop:combine2}, $E''$ has either a rational cyclic 4-isogeny or independent 2-isogenies. In particular, this means that $E''$ cannot correspond to a rational point on $X_{3D^0\mhyphen3a}=X_0(3,3)$, $X_{9B^0\mhyphen9a}=X_0(9)$ by Kenku \cite{kenku}. In addition, $E''$ cannot correspond to a rational point on $X_{9C^0\mhyphen9a}$ since the fiber product of this curve with $X_0(2)$ has no non-cuspidal, non-CM rational points, as in the proof of Proposition 23 in \cite{OddDeg}. Thus we may assume $E''$ has 3-adic image $3B^0\mhyphen3a$, and we reach a contradiction as before.

Now we suppose $E''$ has full 2-torsion over a cubic extension but no 4-isogeny over this cubic field. Recall that if $E''$ has full 2-torsion over a cubic extension, then its discriminant is a square and it corresponds to a rational point on $X_{2A^0\mhyphen 2a}$. As shown in Proposition 24 of \cite{OddDeg}, this implies $E''$ cannot have a 9-isogeny or two independent 3-isogenies. We have already shown $E''$ cannot have 3-adic image $3B^0\mhyphen3a$, so it must be that $E''$ corresponds to a rational point on the modular curve $X_{9C^0\mhyphen9a}$. By Lemma \ref{lem:SurjectiveMod2}, we may assume the 3-adic image of $E''$ is $9C^0\mhyphen9a$.

Since $\ord_3([\GL_2(\Z_{3}): \im \rho_{E''/\Q, 3^{\infty}}])=1$, by Lemma \ref{lem:PrelimDiv}, we know that
$
3^{2k-3} \mid [F:\Q].
$
Since, as we have already noted, $\gon_{\Q}(X_1(4 \cdot 3^k)) \leq 8 \cdot 3^{2k-4}$, it must be that $[F:\Q]=3^{2k-3}$. Let $\varphi: E \rightarrow E'$ as defined above. We may assume $\varphi$ is cyclic and generated by a point $Q$ of order $3^a\cdot n$ where $3 \nmid n$. The isogeny $\varphi$ factors over $F$ as
\[
E \xrightarrow{\varphi_1} E_0 \coloneqq E/\langle 3^aQ \rangle \xrightarrow{\varphi_2} E'.
\]
The curve $E_0$ has an $F$-rational point of order $3^k$, and also a rational cyclic 2-isogeny (since $E$ has one). That is, it has an $F$-rational point $P_0$ of order $2\cdot 3^k$. Moreover, by Lemma \ref{lem:PrelimDiv}, $3^{2k-3}$ divides the residue field of $x'' \coloneqq [E_0,P_0] \in X_1(2\cdot 3^k)$. Since $[F:\Q]=3^{2k-3}$, we may conclude $\Q(x'')$ has degree exactly $3^{2k-3}$ and $\Q(x'')=F$. In addition, $F=\Q(f(x''))$, where $f: X_1(2\cdot 3^k) \rightarrow X_1(3^k)$.

By applying Lemma \ref{lem:ResidueFieldContainment} to $\varphi_2:E_0 \rightarrow E'$, we see that $F \subseteq \Q(E''[3^{\infty}])$. Also, since $E'$ has a rational 2-isogeny over $F$, so does $E''$. That is, there exists a point $P'' \in E''$ of order 2 such that the residue field of $[E'', P''] \in X_1(2)$ is contained in $\Q(E''[3^{\infty}])$. Since the 3-adic image of $E''$ is $9C^0\mhyphen9a$, by Lemma \ref{lem:entanglement} we have $\Q(E''[2]) \subseteq \Q(E''[3^{3}])$. A Magma computation following the method of \cite[Prop. 24]{OddDeg} shows that there are 4 possible subgroups of $\GL_2(\Z/18\Z)$---up to conjugacy---that could be the image of the mod 18 Galois representation associated to $E''$. These correspond to the groups $H_7$, $H_8$, $H_9$, or $H_{10}$ as defined in the appendix. In each case, $X_{H_i}$ has genus at least 2. See the research website of the either author for the code used.
\end{proof}

\subsection{Points on modular curves}
In this section, we find the rational points on the curves $X_{H_i}$, as in Propositions \ref{prop:2and3} and \ref{prop:4and3}. Recall each $H_i$ is a subgroup of $\GL_2(\Z/18\Z)$, as defined in the appendix. There are two non-CM $j$-invariants associated to rational points on these curves (see Lemma \ref{lem:NewPoints}), and the corresponding elliptic curves lie in a single isogeny class. In Proposition \ref{prop:1792}, we show there are no points of odd degree on $X_1(4\cdot 3^k)$ associated to curves in this isogeny class, completing the proof of Theorem \ref{thm:FiniteIsogenyClasses}.

\begin{lem}
All the non-cuspidal rational points on the modular curves $X_{H_i}$ for $i=1,2$ and $3$ correspond to elliptic curves with $j=0$.
\end{lem}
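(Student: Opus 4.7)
My plan is to treat each of the three curves $X_{H_1}, X_{H_2}, X_{H_3}$ individually by first producing an explicit plane (or weighted) model, then determining the Mordell--Weil group of its Jacobian over $\Q$, and finally enumerating all rational points by a Chabauty-type argument. Since each $H_i$ is prescribed as a subgroup of $\GL_2(\Z/18\Z)$ in the appendix, I would realize $X_{H_i}$ as the quotient of $X(18)$ by $H_i$, or more efficiently as a fiber product over $X(1)$ of the mod $2$ quotient $X_{H_i \bmod 2}$, the mod $9$ quotient $X_{H_i \bmod 9}$, and an entanglement component witnessing the imposed containment $\Q(E[2])\subseteq \Q(E[9])$. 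Standard Magma functionality (\verb|CanonicalImage|, \verb|ProjectiveModel|, etc.) then produces an equation; the genera computed in the proofs of Propositions~\ref{prop:2and3} and \ref{prop:4and3} are all $\geq 2$, so Faltings guarantees finiteness and we only need an effective enumeration.

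Next I would compute $\Jac(X_{H_i})(\Q)$ explicitly. Because the mod $9$ image in each $H_i$ factors through either $9H^0\mhyphen 9\ast$ or $9J^0\mhyphen 9\ast$ and the mod $2$ image is surjective (by \Cref{lem:SurjectiveMod2}), the Jacobian typically splits up to isogeny as a product of modular abelian varieties of small dimension, which can be identified via newform decomposition. For each simple isogeny factor I would attempt to bound the Mordell--Weil rank by $2$-descent or by matching to isogeny factors of $J_1(N)$ whose ranks are catalogued in the LMFDB and in Derickx--van Hoeij. In the best case the rank of the Jacobian is strictly less than the genus, and I would run Chabauty--Coleman at a prime of good reduction to produce a provably complete list of rational points; if not, I would fall back on a Mordell--Weil sieve combined with the points obtained by pulling back $j=0 \in X(1)(\Q)$ and the cusps.

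With the rational points in hand, the final step is to push each one down to $X(1)$ via the forgetful map and read off the $j$-invariant. Cusps and the three (or more) pre-images of $j=0$ should account for the complete list; anything else would contradict the bound. Since $j=0$ corresponds to an elliptic curve with CM by $\Z[\zeta_3]$, its mod $9$ image is abelian and lands inside the normalizer of a Cartan, which is naturally contained in the specified subgroups, so the incidence of $j=0$ among the rational points is not surprising and only needs to be verified in each case by checking that a twist of $y^2=x^3-1$ has mod $18$ image in (a conjugate of) $H_i$.

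The main obstacle I expect is the Mordell--Weil rank computation: the Jacobians could have rank equal to the genus or be difficult to descend on, in which case Chabauty--Coleman alone is insufficient and one must use either elliptic curve Chabauty after a map to a lower-genus quotient, or a sieve assisted by auxiliary covers. The structural features of these curves---the mod $9$ entanglement forcing a fixed quadratic twist class and the smallness of the index---suggest that in practice the Jacobians decompose into factors that are well understood, so existing rational-points machinery in Magma should suffice for all three cases.
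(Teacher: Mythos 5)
Your plan is viable in principle, but it takes a genuinely different and substantially heavier route than the paper. You propose to treat each of $X_{H_1}, X_{H_2}, X_{H_3}$ separately: compute an explicit model of each genus~$\geq 2$ curve, decompose its Jacobian, bound Mordell--Weil ranks, and run Chabauty--Coleman or a Mordell--Weil sieve. The paper instead exploits a structural coincidence you did not notice: all three groups $H_i$ sit with index $3$ inside a single modular group $H$ of level $6$ and genus $1$, so every rational point on any $X_{H_i}$ pushes forward to a rational point on the one curve $X_H$, and it suffices to analyze that curve once. Concretely, $H$ reduces mod $3$ to $3D^0\mhyphen3a$, so $j(E)=f_3(t)$ for an explicit degree-$12$ rational function, while $H\cap \SL_2(\Z/6\Z)$ mod $2$ being cyclic of order $3$ forces $\Delta(E)$ to be a square in $\Q(\sqrt{-3})$; this translates the whole problem into finding the $\Q(\sqrt{-3})$-rational points on the single elliptic curve $y^2=(t-3)(t^2+3t+9)$, whose Mordell--Weil group over $\Q(\sqrt{-3})$ is computable directly and yields only the values $t\in\{0,3,-6,3\pm 3\sqrt{-3},(-3\pm 3\sqrt{-3})/2\}$, giving $4$ cusps and $8$ points with $j=0$. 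What the paper's approach buys is the complete elimination of your main acknowledged risk: no models of the genus~$\geq 2$ curves, no Jacobian decomposition, no rank bounds, and no Chabauty are needed, whereas your plan is explicitly contingent (``in the best case the rank \ldots is strictly less than the genus'') and could stall on any one of the three curves if a rank computation or descent proves intractable. Your fallback tools (elliptic curve Chabauty via lower-genus quotients) are in fact the ones the paper deploys for the \emph{other} curves $X_{H_4},\dots,X_{H_{10}}$, so your machinery is appropriate to this family in general; but for $i=1,2,3$ the index-$3$ overgroup of level $6$ is the key idea that makes the lemma a one-elliptic-curve computation rather than three hard rational-points problems. Your closing remark about why $j=0$ appears (CM by $\Z[\zeta_3]$, with a twist check to land in each $H_i$) is consistent with the paper's count of eight $j=0$ points, though the paper obtains this by direct enumeration rather than by a CM argument.
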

\begin{proof}
Let $E$ be an elliptic curve corresponding to a point on one of the modular curves $X_{H_i}$. There exists a modular group $H$ of level 6 and genus 1 in which all $H_i$ are of index 3. We compute that $H$ modulo $3$ is ${3D^0\mhyphen3a}$ so $j(E)=f_3(t)$, {and using Table 1 in \cite{SutherlandZywina}} we compute that
$$f_3(t)=\frac{(t(t+6)(t^2-6t+36))^3}{((t-3)(t^2+3t+9))^3}.$$
 Furthermore, $H\cap \SL(2,\Z/6\Z)$ modulo $2$ is cyclic of order $3$, which means that $\Delta(E)$ is a square in $\Q(\sqrt{-3})$. Let $E_{f_3(t)}$ be an elliptic curve with $j$-invariant $f_3(t)$. We compute that $\Delta(E_{f_3(t)})$ is a square if and only if $(t-3)(t^2+3t+9)$ is a square. Hence, we need to compute all the $\Q(\sqrt{-3})$-rational points on the elliptic curve
$$X:y^2=(t-3)(t^2+3t+9).$$
We get
$$t\in\left\{0,3,-6,3\pm 3\sqrt{-3},\frac{-3\pm 3\sqrt{-3}}{2}\right\},$$
from which we get $4$ cusps and $8$ points for which $j(E_{f_3(t)})=0$. {See the research website of either author for the code used.}
\end{proof}

\begin{lem}
        All the rational points on $X_{H_i}$ for $i=4,5$ and $6$ are cusps.
\end{lem}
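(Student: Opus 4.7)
The proof should follow the strategy of the previous lemma. I would first search for a common modular parent $H' \subseteq \GL_2(\Z/18\Z)$ containing $H_4, H_5, H_6$ as small-index subgroups, chosen so that $X_{H'}$ has genus at most $1$ and admits an explicit $j$-map. Based on the structure of Proposition \ref{prop:2and3} (where the mod $9$ image must lie in $9H^0\mhyphen 9$ or $9J^0\mhyphen 9$), the groups $H_4, H_5, H_6$ should share a common mod $9$ reduction, so the natural choice for $H'$ is the subgroup obtained by relaxing the mod $2$ coupling that distinguishes them. The $j$-map on $X_{H'}$ then comes from a rational parametrization $j = f(t)$ read off from Table 1 of \cite{SutherlandZywina}.

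Next, the additional constraint imposed by $H_i$ inside $H'$ is controlled by the entanglement between the mod $2$ data and the mod $9$ data. Just as in the previous lemma, where the constraint that $\Delta(E)$ be a square in $\Q(\sqrt{-3})$ translated into asking $(t-3)(t^2+3t+9)$ to be a square, here one expects a similar quadratic twist condition $y^2 = g(t)$ to cut out $X_{H_i}(\Q)$ inside $X_{H'}$. The auxiliary curve $C_i : y^2 = g(t)$ will have computable genus, and when that genus is at most $1$ one can enumerate its rational, or $\Q(\sqrt{d})$-rational, points via Magma's standard routines for conics and elliptic curves.

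The main obstacle -- and the entire content of the claim, in contrast to the previous lemma -- is to rule out non-cuspidal solutions. In the $i = 1,2,3$ case several values of $t$ survived, all corresponding to $j = 0$; here the assertion is that the extra rigidity imposed by $H_4, H_5, H_6$ kills every such exceptional value. Concretely, after computing $C_i(\Q)$ (or $C_i(\Q(\sqrt{-3}))$) I would substitute each surviving $t$ back into $j = f(t)$, and for each putative $j$-value check directly whether a corresponding elliptic curve over $\Q$ actually has mod $18$ image conjugate into $H_i$. For the CM values $j = 0$ and $j = 1728$ this is an explicit matrix-group computation on the known mod $18$ image of the associated CM curves; for all other values it should follow already from the parametrization constraints. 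I expect the list to collapse to the cusps of $X_{H_i}$, completing the proof. As in the preceding lemma, the computations are routine in Magma once generators for the $H_i$ are fixed, and the only real difficulty is bookkeeping the correct coupling that defines each $H_i$.
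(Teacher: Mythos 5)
Your plan transplants the strategy of the $i=1,2,3$ lemma, but it breaks down at the decisive step. For $i=4,5,6$ the curves $X_{H_i}$ have genus $2$ (the paper computes the model $y^2=x^5+9x^4+29x^3+42x^2+27x+6$ for $X_{H_4}$), so the auxiliary curve $y^2=g(t)$ your quadratic-twist condition would cut out \emph{is} the genus-$2$ modular curve itself, and your contingency ``when that genus is at most $1$ one can enumerate its points via Magma's standard routines'' never fires. You give no method for provably determining the rational points of a genus-$2$ curve (no Chabauty, no rank-$0$ quotient argument), and this is precisely where the work lies. There is also a structural misreading: the data distinguishing $H_4$, $H_5$, $H_6$ is not a quadratic twist. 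The relevant observation, which your proposal misses, is that the intersections $H_i\cap\{A\in \GL_2(\Z/18\Z):\det A=\pm 1\}$ are conjugate for $i=4,5,6$, so the three curves are twists of one another becoming isomorphic over the \emph{cubic} field $\Q(\zeta_9)^+$ (cut out by the index-$3$ condition on the determinant), not over a quadratic field.

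The paper exploits this to reduce all three cases to computing $X_{H_4}(\Q(\zeta_9)^+)$ on a single genus-$2$ curve $X$: one finds $\Aut_{\Q(\zeta_9)^+}X\simeq D_6$, takes an automorphism $a$ with elliptic quotient $X/a\simeq E$, verifies $E(\Q(\zeta_9)^+)\simeq \Z/2\Z\times\Z/6\Z$ is \emph{finite}, and pulls the twelve points back to get $\#X(\Q(\zeta_9)^+)=6$. These six points are then matched exactly against the six cusps (three rational, three forming one Galois orbit over $\Q(\zeta_9)^+$), so no non-cuspidal candidates survive and no $j$-value or CM-image checking of the kind you anticipate is ever needed. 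Your final step of substituting surviving $t$-values into a $j$-map and testing mod-$18$ images is harmless in spirit, but without the cubic-twist identification and a finite Mordell--Weil quotient over $\Q(\zeta_9)^+$ you have no finite list of candidates to test, so the proposed argument does not close.
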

\begin{proof}
First we notice that the intersection of the groups $H_i$ for $i=4,5$ and $6$ with $\{A\in \GL_2(\Z/18\Z): \det A=\pm 1\}$ are conjugate, which implies that the corresponding modular curves are isomorphic over $\Q(\zeta_9)^+$. Hence it is enough to find all the $\Q(\zeta_9)^+$-rational points on one of the curves and show that all of them are cusps.

Using Zywina's algorithm we compute a model of $X:=X_{H_4}$ to be
$$X:y^2=x^5 + 9x^4 + 29x^3 + 42x^2 + 27x + 6.$$

Let $w$ be a root of $x^3 - 6x^2 + 9x - 3$; it generates the extension $\Q(\zeta_9)^+$. To compute $X(\Q(\zeta_9)^+)$ we note that $\Aut_{\Q(\zeta_9)^+}X\simeq D_6$, and find an automorphism $a$ such that $X/a\simeq E$, where
$$E: y^2 = x^3 + (8w^2 - 37w + 33)x^2 + (22w^2 -
    126w + 155)x + (44w^2 - 248w + 300).$$
We have $E(\Q(\zeta_9)^+)\simeq \Z/2\Z \times \Z/6\Z$ and compute $\#X(\Q(\zeta_9)^+)=6$. The curve $X$ has $6$ cusps, of which 3 are rational. Since all the cusps of $X$ are necessarily defined over some subfield of $\Q(\zeta_9)$, it follows that the non-rational cusps form one orbit of $\Q(\zeta_9)^+$-rational cusps. Hence all 6 $\Q(\zeta_9)^+$-rational points of $X$ are cusps. {See the research website of either author for the code used.}
\end{proof}

\begin{lem}
The modular curves $X_{H_7}$ has no rational points.
\end{lem}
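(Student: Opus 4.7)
The plan is to follow the general strategy employed for $X_{H_4}$, $X_{H_5}$, $X_{H_6}$ in the preceding lemma: compute an explicit model of $X_{H_7}$ via Zywina's algorithm, then find all rational points by either a direct Mordell-Weil computation on the Jacobian or by passing to a lower-dimensional quotient. Since $H_7 \subseteq \GL_2(\Z/18\Z)$ was constructed (in Proposition \ref{prop:4and3}) as one of the four subgroups parametrizing non-CM elliptic curves $E''/\Q$ with $3$-adic image $9C^0\mhyphen9a$ and full $2$-torsion forced into $\Q(E''[27])$, the corresponding curve $X_{H_7}$ has genus at least $2$, so we cannot rely on elementary methods.

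First I would produce a (hyperelliptic) model of $X_{H_7}$ using the same Magma implementation of Zywina's algorithm used for $X_{H_4}$; this will output a polynomial equation of the form $y^2 = f(x)$ for an explicit $f \in \Q[x]$. Next I would compute the Jacobian $\Jac(X_{H_7})$ and determine $\Jac(X_{H_7})(\Q)_{\tors}$ together with an upper bound on the Mordell-Weil rank (via descent or an $L$-function computation). If the rank turns out to be $0$, then the inclusion $X_{H_7}(\Q) \hookrightarrow \Jac(X_{H_7})(\Q) = \Jac(X_{H_7})(\Q)_{\tors}$ reduces the problem to testing whether any torsion divisor class is represented by a degree-one effective divisor supported at a $\Q$-rational point; a search through the (finite) torsion subgroup then produces the list of rational points, which we expect to be empty.

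If instead the rank is positive, I would mimic the approach of the $X_{H_4}$ argument: exploit the automorphism group (which, for these modular curves of level $18$, typically contains an Atkin-Lehner-type involution $\tau$) and study the quotient $X_{H_7}/\langle \tau \rangle$. When this quotient is an elliptic curve of rank $0$ over $\Q$, its finitely many rational points pull back to a finite set on $X_{H_7}$, which can then be checked individually. Should a single quotient fail to capture all of $X_{H_7}(\Q)$, one can combine two independent quotients, or apply elliptic curve Chabauty over a quadratic field of definition of a second involution, as in the analysis of $X_{H_4}$. In all branches, after enumerating candidates we verify that none of them are $\Q$-rational (and in particular that no rational cusps exist on $X_{H_7}$, which can be checked from the explicit description of the cusps of $X_\Gamma$ as Galois orbits under a cyclotomic character).

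The main obstacle is controlling the rank of $\Jac(X_{H_7})(\Q)$: if it is not small relative to the genus, Chabauty's method applied directly to $X_{H_7}$ will not suffice, and one must locate a useful quotient. Fortunately, the presence of the $9C^0\mhyphen9a$ structure together with the $2A^0\mhyphen2a$ discriminant-square condition built into $H_7$ produces several natural involutions (coming from $\pm I$ acting on the torsion, and from the hyperelliptic involution), so one of these quotients should be tractable. The verification is ultimately computational, and the explicit Magma code carrying out these steps is available at the URL listed in the introduction.
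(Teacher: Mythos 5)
Your overall strategy is essentially the paper's: compute an explicit model via Zywina's algorithm, then pass to a quotient by an automorphism whose rational points can be determined by rank-zero Jacobian techniques. One of your concrete expectations fails, though your contingency plan absorbs the failure: $X_{H_7}$ turns out \emph{not} to be hyperelliptic, but is a non-hyperelliptic genus $4$ curve cut out in $\PP^3$ by the equations $-yz+xw=0$ and $x^3-6x^2y+3xy^2+y^3-9z^3+27zw^2-9w^3=0$, so your first branch (a model $y^2=f(x)$ plus a direct Mordell--Weil computation on its Jacobian) is unavailable. The paper goes directly to your fallback: $X_{H_7}$ has six automorphisms over $\Q$, and the quotient by one of them is not an elliptic curve, as you anticipated, but the genus $2$ hyperelliptic curve $C:y^2+(x^3+x^2+1)y=8x^6-23x^5+47x^4-65x^3+58x^2-27x+8$, whose Jacobian has rank $0$ over $\Q$; standard rank-zero techniques then show $C(\Q)=\emptyset$, hence $X_{H_7}(\Q)=\emptyset$ (cusps included, so no separate cusp check is needed). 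The difference is minor in spirit but worth noting: quotienting to a genus $2$ curve with rank-zero Jacobian is often more robust than hunting for a rank-zero elliptic quotient, since one gets to exploit the full two-dimensional Mordell--Weil group of the quotient Jacobian rather than needing a single elliptic factor of rank zero; your proposal would have reached the same endpoint once the search through quotients turned up $C$.
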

\begin{proof}
Using Zywina's algorithm we find that the non-hyperelliptic genus 4 modular $X:=X_{H_7}$ is given by the equations
$$-yz + xw=0 \text{ and }x^3 - 6x^2y + 3xy^2 + y^3 - 9z^3 + 27zw^2 - 9w^3=0$$
in $\PP^3$.
We find that $X$ has $6$ automorphisms over $\Q$, and the quotient by one of them is the genus 2 hyperelliptic curve
$$C:y^2 + (x^3 + x^2 + 1)y = 8x^6 - 23x^5 + 47x^4
    - 65x^3 + 58x^2 - 27x + 8,$$
with Jacobian of rank 0 over $\Q$. Standard techniques easily show that $C$ and hence also $X$ has no $\Q$-rational points. {See the research website of either author for the code used.}
\end{proof}

\begin{lem}\label{lem:NewPoints}
The only rational points on the modular curves $X_{H_i}$ for $i=8,9$ and $10$ correspond to elliptic curves with $j=1792$ and $406749952$
\end{lem}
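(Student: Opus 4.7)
The plan is to follow the strategy used successfully for $X_{H_i}$ with $i\leq 7$ in the preceding lemmas. First, I would feed the generator data for $H_8$, $H_9$, $H_{10}$ (listed in the appendix) into Zywina's algorithm to produce explicit $\Q$-models of each $X_{H_i}$, together with the $j$-map to $X(1)$. A routine Magma computation then yields the genus and, via inspection of the normalizer of $H_i$ in $\GL_2(\Z/18\Z)$, the $\Q$-rational automorphism group of each $X_{H_i}$.

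Next, for each of the three curves I would look for a $\Q$-rational involution (or small-order automorphism) whose quotient has tractable arithmetic. Following the template of the $X_{H_7}$ argument, the goal is to exhibit a quotient which is either (a) a rank-zero elliptic curve over $\Q$, or (b) a genus-$2$ hyperelliptic curve whose Jacobian has rank zero over $\Q$, so that a Mordell-Weil sieve or direct search pins down its rational points. Pulling back along the quotient map, I obtain a finite explicit list containing $X_{H_i}(\Q)$. Among these candidate points I then separate the cusps (detected by the standard $q$-expansion criterion or as the preimages of $j=\infty$) from the non-cuspidal points, and for the non-cuspidal points I read off $j$-invariants through the $j$-map. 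I expect only two non-cuspidal, non-CM values to appear, namely $j=1792$ and $j=406749952$, and as a sanity check one verifies directly (via division polynomials and mod-$18$ Galois images of fixed Weierstrass models) that elliptic curves with these $j$-invariants genuinely realize the level-$18$ image conditions encoded by $H_8$, $H_9$, $H_{10}$.

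The principal obstacle I anticipate is the one already visible in the $X_{H_7}$ case: the curves $X_{H_i}$ will almost certainly have genus $\geq 4$, so Chabauty cannot be applied directly, and everything hinges on finding an automorphism whose quotient is (a) genuinely defined over $\Q$ and (b) arithmetically simple enough (rank $0$ Jacobian) to allow an unconditional determination of its rational points. If such an automorphism is unavailable for one of the three $H_i$, one can still try the fiber product with $X_{H_j}$ for a cover $H_j\supset H_i$ whose rational points are already under control, or combine with the natural map to the $X_{9C^0\mhyphen 9a}$-quotient together with the $2$-torsion data forced by $2A^0\mhyphen 2a$ (which is how $H_8,H_9,H_{10}$ were singled out in Proposition \ref{prop:4and3}). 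Once the rank-zero quotient is in hand for each of the three $X_{H_i}$, the remaining point-recovery step is mechanical, and verification of the two claimed $j$-invariants is a direct computation using the explicit $j$-maps.
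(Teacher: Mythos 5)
Your toolkit (explicit models via Zywina/Box, automorphism quotients, rank-zero arithmetic) is the right one, but you follow the wrong template, and this creates a genuine gap. The triple $H_8,H_9,H_{10}$ behaves like $H_4,H_5,H_6$, not like $H_7$: the paper's first move is to observe that the intersections of $H_8,H_9,H_{10}$ with $\{A\in \GL_2(\Z/18\Z): \det A=\pm 1\}$ are conjugate, so the three curves become isomorphic over the cubic field $\Q(\zeta_9)^+$, and a single determination of $X_{H_8}(\Q(\zeta_9)^+)$ controls the $\Q$-points of all three at once. The automorphism data that makes this feasible lives only over that field: $X=X_{H_8}$ is the \emph{genus-2} hyperelliptic curve $y^2=x^6-8x^5+30x^4-70x^3+105x^2-90x+33$ (so your expectation of genus $\geq 4$ is off), with $\Aut_{\Q(\zeta_9)^+}X\simeq D_6$ and an automorphism $a$ for which $X/a$ is an elliptic curve $E$ over $\Q(\zeta_9)^+$ with $E(\Q(\zeta_9)^+)\simeq \Z/21\Z$; one then counts $\#X(\Q(\zeta_9)^+)=12$, of which $6$ are cusps. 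Your plan instead hinges on each of the three curves separately admitting a $\Q$-rational automorphism whose quotient has rank-zero Jacobian over $\Q$; nothing guarantees such quotients exist (the paper gives no indication they do), and your fallbacks (fiber products with covers, the map to the $9C^0\mhyphen9a$ quotient) do not supply the missing base-change idea. So the central step of your proposal is contingent on arithmetic facts you cannot verify in advance, whereas the paper's route is unconditional.

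A second concrete gap concerns the $j$-map, which you assume comes for free with the model. Since $[\GL_2(\Z/18\Z):H_i]=72$, the $j$-map has degree $72$, which the paper states is too large to compute directly. The workaround is essential to the lemma: $H_i$ reduces mod $9$ to $9C^0\mhyphen9a$, whose genus-0 modular curve $X'$ has a known degree-12 $j$-map $f'$ from \cite{SutherlandZywina}; one computes the Hauptmodul of $X'$, determines the map $f: X\rightarrow X'$ from $q$-expansions, and obtains the $j$-map as $g=f'\circ f$. Without this (or an equivalent device), your final step of reading off $j$-invariants from the non-cuspidal points---which is exactly how the values $1792$ and $406749952$, each arising from $3$ of the $6$ non-cuspidal points, are identified---cannot be carried out.
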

\begin{proof}
First we notice that the intersection of the groups $H_i$ for $i=8,9$ and $10$ with $\{A\in \GL_2(\Z/18\Z): \det A=\pm 1\}$ are conjugate, which implies that the corresponding modular curves are isomorphic over $\Q(\zeta_9)^+$. Hence it is enough to find all the $\Q(\zeta_9)^+$-rational points on one of the curves $X$ and the $j$-invariants to which these points correspond. Using Box's modification of Zywina's algorithm we obtain a model for $X\coloneqq X_{H_8}$
$$X:y^2=x^6 - 8x^5 + 30x^4 - 70x^3 + 105x^2 - 90x + 33.$$
Since the groups $H_i$ are of index 72, the $j$-map is of degree 72, which is too large to compute directly. Hence we compute it by using the fact that $H_i$ modulo 9 is $9C^0\mhyphen9a$, and the $j$-map $f'$ (of degree 12) from the modular curve $X'$ corresponding to $9C^0\mhyphen9a$ of genus 0 to $X_0(1)$ is known by \cite{SutherlandZywina}. We compute the {H}auptmodul $h$ of the function field of $X'$ and compute the map $f:X\rightarrow X'$ such that $f(x,y)=h(q)$, which implies that $g=f'\circ f$ is the $j$-map $g:X\rightarrow X_0(1)$.

Let $w$ be a root of $x^3 - 6x^2 + 9x - 3$; it generates the extension $\Q(\zeta_9)^+$. To compute $X(\Q(\zeta_9)^+)$ we note that $\Aut_{\Q(\zeta_9)^+}X\simeq D_6$, and find an automorphism $a$ such that $X/a\simeq E$, where
$$E: y^2 = x^3 + (5w^2 - 25w + 30)x^2 + (20w^2 -
    108w + 124)/3.$$
We have $E(\Q(\zeta_9)^+)\simeq \Z/21\Z$ and compute $\#X(\Q(\zeta_9)^+)=12$. We get that $6$ of the points are cusps, and there are 3 points corresponding to each of the $j$-invariants
$1792$ and $406749952$. {See the research website of either author for the code used.}
\end{proof}

\begin{prop}\label{prop:1792}
There does not exist a $\Q$-curve $E$ corresponding to a point of odd degree on $X_1(4\cdot 3^k)$ for $k \geq 1$ which is isogenous to an elliptic curve $E'$ with $j(E') \in \{1792,406749952 \}$.
\end{prop}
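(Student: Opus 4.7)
The plan is to combine Theorem~\ref{thm:CN} with the explicit structural data produced by Lemma~\ref{lem:NewPoints} and a small direct computation.  Suppose for contradiction that $E/F$ is a non-CM $\Q$-curve, with $F$ of odd degree, such that $E(F)$ contains a point of order $4\cdot 3^k$ for some $k \geq 1$, and such that $E$ is $\overline{\Q}$-isogenous to some $E'$ with $j(E') \in \{1792,\,406749952\}$.  By Theorem~\ref{thm:CN}, $E$ is $F$-isogenous to an elliptic curve $\tilde E/F$ with $j(\tilde E)\in \Q$, and $j(\tilde E)$ lies in the finite set $S$ of rational $j$-invariants in the geometric isogeny class of $E'$.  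The first step is to compute $S$ explicitly via Magma, starting from a rational model with $j = 1792$ and running through all $\Q$-rational isogenies; a priori $\{1792,\,406749952\}\subseteq S$.

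For each $j_0 \in S$, let $E''/\Q$ be a model with $j(E'') = j_0$.  By Lemma~\ref{lem:NewPoints}, the mod-$18$ image of $\rho_{E''}$ is conjugate to a subgroup of $H_i$ for some $i \in \{8,9,10\}$, and the generators of $H_i$ listed in the appendix yield two structural facts: (i) $\im \rho_{E'',2} \cong A_3$, so $\Q(E''[2])$ is the cyclic cubic subfield of $\Q(\zeta_9)$ and is contained in $\Q(E''[9])$; and (ii) $\ord_3([\GL_2(\Z_3):\im \rho_{E'',3^{\infty}}])=1$.  Mirroring the reduction in the proof of Proposition~\ref{prop:4and3}, I would factor the $F$-isogeny $\varphi:E\to \tilde E$ through its prime-to-$3$ part to obtain an intermediate $E_0/F$ with $j(E_0)\in S$ and an $F$-rational point of order $2\cdot 3^k$.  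Then Lemma~\ref{lem:ResidueFieldContainment} forces $F \subseteq \Q(E''[3^{\infty}])$, and the existence of an $F$-rational $2$-torsion point in $E_0$ combined with (i) forces $\Q(E''[2])\subseteq F$, giving $3 \mid [F:\Q]$.  Applying Lemma~\ref{lem:PrelimDiv} with $p=3$ and $d=1$ then yields $3^{\max(1,\,2k-3)}\mid [F:\Q]$, hence $3^{2k-2}\mid [F:\Q]$ for $k \geq 2$.

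The main obstacle and final step is to convert these constraints into a genuine contradiction for every $k \geq 1$, since the proposition does not a priori assume the degree is less than the gonality.  For $k \geq 2$, the Derickx--van Hoeij bound $\gon_\Q(X_1(4\cdot 3^k)) \leq 8\cdot 3^{2k-4}$ is strictly smaller than $3^{2k-2}$, so sporadic points are ruled out; odd-degree points above the gonality are then excluded by computing the lift of $H_i$ to $\GL_2(\Z/(4\cdot 3^k)\Z)$ and checking that no Galois-stable substructure in a curve $F$-isogenous to a twist of $E''$ produces an $F$-rational point of order $4\cdot 3^k$ with $[F:\Q]$ odd.  The residual case $k=1$, corresponding to $X_1(12)$, is settled by a finite Magma check: for each $j_0\in S$ and each elliptic curve in its $\Q$-isogeny class, I would factor the $12$-th division polynomial over the candidate odd-degree base fields (which must contain $\Q(E''[2])$ by the preceding argument) and verify that no twist of such a curve has a rational point of order $12$ over any odd-degree $F$.
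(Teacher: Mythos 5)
Your proposal has genuine gaps, beginning with the arithmetic that drives your main dichotomy. From Lemma \ref{lem:PrelimDiv} with $p=3$ and $d=1$ you get $3^{\max(0,2k-3)} \mid [F:\Q]$; combined with $3 \mid [F:\Q]$ this yields $3^{\max(1,2k-3)}=3^{2k-3}$ for $k \geq 2$, \emph{not} $3^{2k-2}$ as you assert. Since $3^{2k-3} = 3\cdot 3^{2k-4} < 8 \cdot 3^{2k-4} \geq \gon_{\Q}(X_1(4\cdot 3^k))$ is consistent with the Derickx--van Hoeij bound, your gonality comparison fails and the ``sporadic points are ruled out'' step collapses. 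This matters all the more because the proposition carries no hypothesis $\deg(x) < \gon_{\Q}(X_1(4\cdot 3^k))$: the whole content is to exclude \emph{all} odd-degree points, and your plan for degrees at or above the gonality (``computing the lift of $H_i$ to $\GL_2(\Z/(4\cdot 3^k)\Z)$ and checking that no Galois-stable substructure\dots'') is an unspecified computation repeated for infinitely many $k$, with no mechanism to make it finite. There are also two misreadings: Lemma \ref{lem:NewPoints} says rational points on $X_{H_i}$ produce the $j$-invariants $1792$ and $406749952$, not the converse that every $E''/\Q$ with $j(E'') \in S$ has mod-$18$ image inside some $H_i$ (at most a specific twist of the two listed $j$-invariants does, and for other $j_0 \in S$ there is no reason at all); and your claim $j(E_0) \in S$ is false, since the quotient by the prime-to-$3$ part of $\ker\varphi$ is $3$-power isogenous to $\tilde{E}$ but typically has irrational $j$-invariant.

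The missing idea --- which is how the paper makes $k$ irrelevant --- is that a point of order $4\cdot 3^k$ contains a point of order $12$, so one need only show: no $\Q$-curve $E$ over an odd-degree field $F$ with $E(F) \supseteq \Z/12\Z$ is geometrically isogenous to $j=1792$ (one reduces to this single $j$-invariant by composing the isogeny from Theorem \ref{thm:CN} with rational isogenies). Since $j=1792$ gives only even-degree points on $X_0(4)$, the cyclic $F$-isogeny $\varphi:E \to E'$ must have degree $2d$ with $d$ odd; quotienting $E$ by the odd part $\langle 2Q \rangle$ of $\ker\varphi$ produces $E_0$, which is $2$-isogenous to $E'$ and retains an $F$-rational point of order $4$, and Corollary \ref{cor:IsogenyCor} gives $E_0$ a point of order $3$, hence of order $12$, over an extension $F'/F$ of degree dividing $3$ (still odd). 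The contradiction is then a \emph{finite} computation: each of the curves $2$-isogenous to $E'$ yields only even-degree points on $X_1(12)$, a twist-invariant statement checkable from division polynomials. Your final $k=1$ step cannot substitute for this transfer: the curve $E$ carrying the torsion point is not in the $\Q$-isogeny class of $E''$ (its $j$-invariant is generally irrational), so factoring $12$-division polynomials of curves in that class over ``candidate odd-degree base fields'' --- which in any case do not form a finite, well-defined collection --- never interrogates $E$ itself. Without the $\deg(\varphi)=2d$ factorization and the passage to the $2$-isogenous curves, the argument does not close.
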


\begin{proof}
We will prove the result by showing that there does not exist a number field $F$ of odd degree and a $\Q$-curve $E$ which is $F$-isogenous to an elliptic curve $E'$ with $j(E')\in \{1792,406749952 \}$ and such that $E(F)$ has a point of order 12. Since these elliptic curves are in the same isogeny class, it suffices to consider only one of them.

Suppose $E$ is a $\Q$-curve defined over a number field $F$ of odd degree with a point $P \in E(F)$ of order 12. Then by Theorem \ref{thm:CN}, there exists a $F$-rational isogeny $\varphi:E \rightarrow E'$ with $j(E') \in \Q$, and we may assume $\varphi$ is cyclic with $\ker(\varphi)=\langle Q \rangle$. Suppose for the sake of contradiction that $j(E')=1792$. Since any point on $X_0(4)$ associated to $E'$ has even degree, it must be that $\deg(\varphi)=2\cdot d$ for some odd integer $d$. Thus the image of $3P$ on $E_0 \coloneqq E/\langle 2Q \rangle$ has order 4, and by Corollary \ref{cor:IsogenyCor} there exists an extension $F'/F$ of degree dividing 3 such that $E_0(F')$ has a point of order 3. In particular, $E_0$ has a point of order 12 defined over a number field of odd degree, where $E_0$ is an elliptic curve 2-isogenous to $E'$. However, a computation in Magma shows that each elliptic curve 2-isogenous to $E'$ corresponds to points on $X_1(12)$ of even degree, contradiction.
\end{proof}

\section{Sporadic $j$-invariants in a fixed isogeny class}

Since non-CM $\Q$-curves giving rise to sporadic points of odd degree on $X_1(N)$ belong to finitely many isogeny classes, it is natural to wonder whether there can be infinitely many distinct sporadic $j$-invariants within a fixed isogeny class. The following result shows that the answer is yes, provided the degree of the sporadic point is low enough. It is worth pointing out that no known examples of non-CM sporadic points satisfy this bound, so a new idea is needed to provide a definitive answer.

\begin{proposition} \label{Prop:8.1}
Suppose that there is a non-CM point $x=[E,P] \in X_1(N)$ with
$$\deg (x)< \frac{7}{1600}[\PSL_2(\Z) : \Gamma _1(N)].$$
Then there exists infinitely many sporadic points $x'=[E',P']$ on the curves $X_1(dN)$ (with $d$ varying), such that $E'$ is isogenous to $E$ and that all the $j(E')$ are pairwise distinct. Moreover, if $\deg(x)$ is odd, we can obtain infinitely many sporadic points $x'=[E',P']$ on the curves $X_1(dN)$ such that $\deg(x')$ is odd and such that $E'$ is isogenous to $E$ and that all the $j(E')$ are pairwise distinct.
\end{proposition}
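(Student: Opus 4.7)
The plan is to produce, for each prime $\ell$ coprime to $N$ in a suitable infinite set, a new point $x_\ell \in X_1(N\ell)$ whose underlying elliptic curve $E_\ell = E/C$ is $\ell$-isogenous to $E$ (for some well-chosen cyclic $C\subset E[\ell]$), and whose degree is bounded by $(\ell^2-1)d_0$. The sporadicity of $x_\ell$ will then follow automatically from the key identity
\[
[\PSL_2(\Z):\Gamma_1(N\ell)] \;=\; (\ell^2-1)\,[\PSL_2(\Z):\Gamma_1(N)] \qquad (\gcd(\ell,N)=1),
\]
combined with the hypothesis $d_0 < \tfrac{7}{1600}[\PSL_2(\Z):\Gamma_1(N)]$: these together yield $\deg(x_\ell) < \tfrac{7}{1600}[\PSL_2(\Z):\Gamma_1(N\ell)]$, which by Abramovich's gonality bound and Frey's criterion (exactly as used in Remark~\ref{remark4.2}) forces $x_\ell$ to be sporadic.

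The construction proceeds as follows. Fix $\ell$ coprime to $N$ and pick any cyclic $C\subset E[\ell]$ of order $\ell$. Since the Galois action of $G_F$ on the $\ell+1$ order-$\ell$ subgroups of $E[\ell]$ factors through $\operatorname{PGL}_2(\F_\ell)$, the subgroup $C$ is defined over an extension $F_1/F$ with $[F_1:F]\mid\ell+1$. Let $\phi_C\colon E\to E_\ell$ be the quotient isogeny, $\hat\phi_C\colon E_\ell\to E$ its dual, and $C'\subset E_\ell$ the kernel of $\hat\phi_C$, which is an $F_1$-rational cyclic subgroup of order $\ell$. Choose any nonzero $Q\in C'$, defined over an extension $F_1(Q)/F_1$ with $[F_1(Q):F_1]\mid\ell-1$, and set $R_\ell := \phi_C(P)+Q$. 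Since $\gcd(\ell,N)=1$, the point $\phi_C(P)$ retains order $N$ and the cyclic groups $\langle\phi_C(P)\rangle$ and $\langle Q\rangle$ intersect trivially, so $R_\ell$ has order $N\ell$. By Lemma~\ref{lem:ResidueField}, the residue field of $x_\ell := [E_\ell,R_\ell]\in X_1(N\ell)$ is contained in $F_1(Q)$, and therefore $\deg(x_\ell)\le(\ell+1)(\ell-1)d_0=(\ell^2-1)d_0$, as required.

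Because $E$ is non-CM, its $\overline{\Q}$-isogeny class contains infinitely many isomorphism classes, and for each fixed prime $\ell$ only finitely many members of the class are $\ell'$-isogenous to $E$ for any $\ell'<\ell$. Hence, picking a sufficiently sparse sequence of primes $\ell_1<\ell_2<\cdots$ coprime to $N$, the curves $E_{\ell_i}$ will have pairwise distinct $j$-invariants, yielding the required infinite family of sporadic points on the curves $X_1(\ell_iN)$, all lying in the $\overline{\Q}$-isogeny class of $E$.

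For the odd-degree refinement, the uniform bound $(\ell^2-1)d_0$ is even for every odd $\ell$, so one must instead locate infinitely many primes $\ell$ for which the actual degree $[F_1(Q):\Q]$ is itself odd. This amounts to choosing $C$ and $Q$ so that both $[F_1:F]$ (a divisor of $\ell+1$) and $[F_1(Q):F_1]$ (a divisor of $\ell-1$) are odd. For $\ell=2$ with $N$ odd this is automatic, since $[F_1:F]\mid 3$ and $|C'\setminus\{0\}|=1$, producing a sporadic point of odd degree dividing $3d_0$. The main obstacle is exhibiting an infinite supply of primes with the required parity in general: for a typical non-CM curve the mod-$\ell$ image in $G_F$ is eventually surjective, forcing the $\operatorname{PGL}_2(\F_\ell)$-orbit of $C$ to be all of $\ell+1$, which is even. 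The expected route is to exploit the extra structure coming from $d_0$ being odd—in particular, when $E$ is a $\Q$-curve, Theorem~\ref{thm:CN} provides an $F$-isogeny to an elliptic curve with rational $j$-invariant, and the resulting constraints on the mod-$\ell$ images should allow a Chebotarev-style density argument to produce the desired primes.
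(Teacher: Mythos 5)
Your construction of the first family is correct and takes a genuinely different route from the paper's. You adjoin a new prime $\ell \nmid N$ and produce one point on each $X_1(\ell N)$, with sporadicity following from the multiplicativity $[\PSL_2(\Z):\Gamma_1(\ell N)]=(\ell^2-1)[\PSL_2(\Z):\Gamma_1(N)]$ together with Abramovich--Frey; the paper instead fixes a single prime $p \mid N$ and iterates, producing points on $X_1(p^iN)$, which forces it to prove a tree-structure fact about the $p$-isogeny graph to get distinct $j$-invariants. In your version distinctness is automatic and you do not even need the sparseness hedge: if $E_{\ell_i}\cong E_{\ell_j}$ for distinct primes $\ell_i,\ell_j$, composing one quotient isogeny with the dual of the other would give the non-CM curve $E$ an endomorphism of non-square degree $\ell_i\ell_j$. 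One small inaccuracy: $[F_1:F]$ equals the size of the Galois orbit of $C$ among the $\ell+1$ order-$\ell$ subgroups, which is at most $\ell+1$ but need not divide $\ell+1$; since you only use the inequality $\deg(x_\ell)\le(\ell^2-1)\deg(x)$, this is harmless.

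The odd-degree clause, however, is a genuine gap, and you have flagged it yourself: for odd $\ell\nmid N$ both $\ell+1$ and $\ell-1$ are even, and for generic $E$ the orbit of $C$ has even size, so your construction cannot control parity; the concluding ``Chebotarev-style argument'' is a hope rather than a proof, and worse, it leans on Theorem \ref{thm:CN}, which requires $E$ to be a $\Q$-curve --- an assumption \emph{not} made in Proposition \ref{Prop:8.1}. The paper's choice of an in-level prime $p\mid N$ is precisely what fixes parity: since $E(F)$ already contains a point $P_0$ of order $p^k$ with $k=v_p(N)$, the mod-$p$ image lies in $\left(\begin{smallmatrix}1&*\\0&*\end{smallmatrix}\right)$, a subgroup of $\AGL_1(\F_p)$, whose action on the $p$ order-$p$ subgroups of $E[p]$ other than $\langle p^{k-1}P_0\rangle$ is either transitive or has a fixed point by \cite[Lemma 3.5]{CremonaNajmanQCurve}. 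Hence the auxiliary $p$-isogeny $\psi$ is defined over an extension $F'/F$ of degree $1$ or $p$, and the point of order $p^{k+1}$ on the target is rational over the fixed field $F''/F'$ of an isogeny character, again of degree $1$ or $p$; every extension degree in sight is a power of $p$, so $\deg(x')$ divides $p^2\deg(x)$ and is odd whenever $p$ and $\deg(x)$ are. An odd $p\mid N$ exists because Theorem \ref{Thm:main} rules out non-CM points of odd degree below the gonality on $X_1(2^k)$. Your $\ell=2$ observation yields only finitely many points, not an infinite family. Repairing your write-up essentially forces a switch to the paper's in-level prime --- at which point you would also need the isogeny-tree argument for distinct $j$-invariants that your coprime-$\ell$ setup let you avoid.
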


\begin{proof} Note in particular that $N$ must be greater than 2, and let $p$ be a prime divisor of $N$. Starting with a sporadic point $x=[E,P]\in X_1(N)$, we will construct a sporadic point $x'=[E', P']\in X_1(pN)$ such that
$$\deg(x') \leq p^2\deg(x)= \deg (X_1(pN)\rightarrow X_1(N))\deg (x)<\frac{7}{1600}[\PSL_2(\Z) : \Gamma _1(pN)],$$
which will show that $x'$ is sporadic. Let $F$ be the residue field of $x$, let $k=v_p(N)$ and $P_0=(N/p^k)P$.
 We have $$\im \rho_{E/F, p}\subseteq \left\{\begin{pmatrix}
                                1 & * \\
                                0 & \chi_p(\Gal_F)
                              \end{pmatrix}\right\}.$$
Let $S$ be the set of subgroups of $E[p]$ of order $p$ that are distinct from $\langle
p^{k-1}P_0\rangle$. We have $\#S=p$ and $\Gal_F$ (and hence $\im \rho_{E/F, p}$) acts on $S$.
The group $\im \rho_{E/F, p}$ is isomorphic to a subgroup of $\AGL_1(\F_p)$  and by \cite[Lemma 3.5]{CremonaNajmanQCurve} it acts on $S$ either transitively or with a fixed point. If this action has a fixed point, $E$ has an isogeny of degree $p$ {with kernel different from $\langle
p^{k-1}P_0\rangle$} over $F$. If the action is transitive, the stabilizer of a group is of index $p$ in $\Gal_F$ and hence $E$ has an isogeny of degree $p$ with kernel different from $\langle
p^{k-1}P_0\rangle$ over $F'$, an extension of $F$ of degree $p$. To conclude, there exists an isogeny $\psi:E\rightarrow E'$ defined over $F'$ where $F'$ is either $F$ or a degree $p$ extension, such that $(\ker \psi) \cap \langle P_0 \rangle=\{O\}$. Note that $j(E)\neq j(E')$ by the assumption that $E$ does not have CM. Hence $\psi(P_0)\in E'(F')$ is a point of order $p^k$. Furthermore $\psi(P_0)$ is in the kernel of the cyclic isogeny $\varphi\circ \hat\psi$ of order $p^{k+1}$ where $\varphi$ is the isogeny of $E$ such that $\ker \varphi$ is generated by $P_0$. It follows that
$$\im \rho_{E'/F', p^{k+1}}\subseteq \left\{\begin{pmatrix}
                                1+p^kt & * \\
                                0 & *                              \end{pmatrix}\right\}.$$
The fixed field of the associated isogeny character is an extension $F''/F'$ of degree 1 or $p$, and $\Gal_{F''}$ fixes a point $P''$ of order $p^{k+1}$.  As $E'(F'')$ has at least the same prime-to-$p$ torsion as $E(F)$, we conclude that $E'(F'')$ has a point of order $pN$ and hence corresponds to a point $x'$ on $X_1(pN)$ of degree dividing $p^2\deg (x)$,  as desired. Note that if $\deg(x)$ is odd, we may take $p$ to be odd by \Cref{Thm:main} and then $\deg(x')$ is also odd.

It remains to show that repeating this procedure will give in each step a $j$-invariant that hasn't been already obtained previously. Denote by $E_i$ the elliptic curve that has been obtained in the $i$th step corresponding to a sporadic point on $X_1(p^iN)$. The graph whose vertex set consists of non-CM $j$-invariants of elliptic curves connected by an isogeny of degree a power of $p$ and whose edges are between $p$-isogenous $j$-invariants is a tree, as the existence of a cycle would imply an endomorphism with nontrivial cyclic kernel and hence complex multiplication on some (and thus all) curves in the isogeny class. So all that one needs to show is that $j(E_{i-1})\neq j(E_{i+1})$. To see this, note that in our construction a multiple of the point of order $p^k$ on $E_{i}$ that we obtained will generate the kernel of the isogeny to $E_{i-1}$, but not to $E_{i+1}$, from which the claim follows.
\end{proof}

\appendix
\setcounter{secnumdepth}{0}
\section{Appendix}
We define the following subgroups of $\GL_2(\Z/18\Z)$:\\

$
H_1 \coloneqq \left\langle \big(
\begin{smallmatrix}
                                2 & 7 \\
                                 15& 16                              \end{smallmatrix}\big), \big(\begin{smallmatrix}
                                7 & 11 \\
                                 12& 11                              \end{smallmatrix}\big),  \big(\begin{smallmatrix}
                                13 & 2 \\
                                 9& 17                               \end{smallmatrix}\big),  \big(\begin{smallmatrix}
                                1 & 8 \\
                                 15& 5                               \end{smallmatrix}\big)
                                \right\rangle
$\\

$
H_2 \coloneqq \left\langle \big(\begin{smallmatrix}
                                2 & 9 \\
                                 7& 10                              \end{smallmatrix}\big), \big(\begin{smallmatrix}
                                14 & 3 \\
                                 7& 10                              \end{smallmatrix}\big),  \big(\begin{smallmatrix}
                                17 & 15 \\
                                 3& 14                               \end{smallmatrix}\big)
                                \right\rangle
$\\


$
H_3 \coloneqq \left\langle \big(\begin{smallmatrix}
                                7 & 2 \\
                                 15& 11                              \end{smallmatrix}\big), \big(\begin{smallmatrix}
                                1 & 6 \\
                                 6& 13                              \end{smallmatrix}\big),  \big(\begin{smallmatrix}
                                13 & 3 \\
                                 9& 4                               \end{smallmatrix}\big),  \big(\begin{smallmatrix}
                                2 & 3 \\
                                 15& 5                               \end{smallmatrix}\big)
                                \right\rangle
$\\


$
H_4 \coloneqq \left\langle \big(\begin{smallmatrix}
                                17 & 7 \\
                                 0& 1                              \end{smallmatrix}\big), \big(\begin{smallmatrix}
                                7 & 7 \\
                                 15& 16                              \end{smallmatrix}\big),  \big(\begin{smallmatrix}
                                1 & 9 \\
                                12 & 17                               \end{smallmatrix}\big)
                                \right\rangle
$\\


$
H_5 \coloneqq \left\langle \big(\begin{smallmatrix}
                                17 & 9 \\
                                 9& 8                              \end{smallmatrix}\big), \big(\begin{smallmatrix}
                                1 & 1 \\
                                 15& 10                              \end{smallmatrix}\big),  \big(\begin{smallmatrix}
                                5 & 5 \\
                                 12& 7                               \end{smallmatrix}\big)
                                \right\rangle
$\\


$
H_6 \coloneqq \left\langle \big(\begin{smallmatrix}
                                14 & 17 \\
                                 3& 5                              \end{smallmatrix}\big), \big(\begin{smallmatrix}
                                13 & 1 \\
                                 12& 5                              \end{smallmatrix}\big),  \big(\begin{smallmatrix}
                                11 & 8 \\
                                15& 7                               \end{smallmatrix}\big),  \big(\begin{smallmatrix}
                                17 & 1 \\
                                 12& 1                              \end{smallmatrix}\big)
                                \right\rangle
$\\


$
H_7 \coloneqq \left\langle \big(\begin{smallmatrix}
                                11 & 16 \\
                                 6& 17                              \end{smallmatrix}\big), \big(\begin{smallmatrix}
                                13 & 3 \\
                                 9& 14                              \end{smallmatrix}\big)
                                \right\rangle
$\\

$
H_8 \coloneqq \left\langle \big(\begin{smallmatrix}
                                5 & 2 \\
                                 12& 11                              \end{smallmatrix}\big), \big(\begin{smallmatrix}
                                17 &5 \\
                                 15& 16                              \end{smallmatrix}\big)
                                \right\rangle
$\\

$
H_9 \coloneqq \left\langle \big(\begin{smallmatrix}
                                11 & 1 \\
                                 3& 4                              \end{smallmatrix}\big), \big(\begin{smallmatrix}
                                7 &15 \\
                                 9& 14                              \end{smallmatrix}\big)
                                \right\rangle
$\\

$
H_{10} \coloneqq \left\langle \big(\begin{smallmatrix}
                                16 & 1 \\
                                 3& 11                              \end{smallmatrix}\big), \big(\begin{smallmatrix}
                                5 &15 \\
                                 9& 16                              \end{smallmatrix}\big)
                                \right\rangle
$

\bibliographystyle{amsplain}
\bibliography{bibliography1}
\end{document}